\theoremstyle{plain}
\newtheorem{theorem}{Theorem}[section]
\newtheorem*{Cor}{Corollary}
\newtheorem*{KBD}{Theorem KBD (Kestelman--Borwein--Ditor Thorem, KBD)}
\newtheorem*{Baire}{Theorem B (Baire's Theorem---almost completeness of
  Baire sets)}
\newtheorem*{Lusin}{Theorem L (Lusin's Theorem; {\rm cf.\ \cite[end of
  Section 55]{Hal}})}
\newtheorem*{MS}{Lemma S (Multiplicative Sierpi\'nski Lemma;
  {\rm \cite{Sier}})}
\newtheorem*{Sier}{Theorem S {\rm (Sierpi\'nski, \cite{Sier})}}
\newtheorem*{KCat}{Theorem \ref{t:6}C (Kingman Theorem for Category)}
\newtheorem*{KMeas}{Theorem \ref{t:6}M (Kingman Theorem for Measure)}
\newtheorem*{Kemp}{Theorem K (Displacements Lemma---Kemperman's Theorem;
  {\rm \cite[Th.\ 2.1]{Kem} with $B_{i}=E$, $a_{i}=t$})}
\newtheorem*{Ruz}{Theorem R (Ruziewicz's Theorem {\rm \cite{Ruz}};
  {\rm cf.\ \cite{Kem} after Lemma 2.1 for the measure case})}
\newtheorem*{BHW}{Theorem BHW {\rm \cite[Th.\ 2.6 and 2.7]{BHW}}}
\theoremstyle{definition}
\newtheorem*{defn}{Definition}
\newtheorem*{exas}{Examples}
\newtheorem*{DN}{Definitions and notation (Essential contiguity conditions)}
\newtheorem*{WA}{Definition (Weakly Archimedean property---an admissibility
condition)}
\newtheorem*{rem}{Remark}
\newtheorem*{rems}{Remarks}
\newtheorem*{nota}{Notation}
\newcommand\norm[1]{\lVert #1\rVert }
\def\cl{\mathop{\rm cl}}
\def\esssup{\mathop{\rm ess\,sup}}
\providecommand*\Index[1]{#1\index{#1}}
\providecommand*\undex[1]{} 
\begin{document}
\alphafootnotes
\author[N. H. Bingham and A. J. Ostaszewski]{N. H. Bingham\footnotemark[1]
  and A. J. Ostaszewski\footnotemark[2]\,\footnotemark[3]}
\chapter{Kingman, category and combinatorics}
\footnotetext[1]{Mathematics Department, Imperial College London, London SW7
  2AZ; n.bingham@ic.ac.uk, nick.bingham@btinternet.com}
\footnotetext[2]{Mathematics Department, London School of Economics, Houghton
  Street, London WC2A 2AE; a.j.ostaszewski@lse.ac.uk}
\footnotetext[3]{This paper was completed on the final day of the life of
  Zofia Ostaszewska, the second author's mother, and is dedicated to her
  memory.}
\arabicfootnotes
\contributor{Nicholas H. Bingham
  \affiliation{Imperial College London}}
\contributor{Adam J. Ostaszewski
  \affiliation{London School of Economics}}
\renewcommand\thesection{\arabic{section}}

\begin{abstract}Kingman's Theorem on skeleton limits---passing from
limits as $n\rightarrow \infty $ along $nh$ ($n\in \mathbb{N}$) for enough
$h>0$ to limits as $t\rightarrow \infty $ for $t\in \mathbb{R}$---is
generalized to a Baire/measurable setting via a topological approach. We
explore its affinity with a combinatorial theorem due to Kestelman and to
Borwein and Ditor, and another due to Bergelson, Hindman and Weiss. As
applications, a theory of `rational' skeletons akin to Kingman's integer
skeletons, and more appropriate to a measurable setting, is developed, and
two combinatorial results in the spirit of van der Waerden's celebrated
theorem on arithmetic progressions are given.\end{abstract}

\subparagraph{Keywords}Baire property, bitopology, complete metrizability,
density topology, discrete skeleton, essential contiguity, generic property,
infinite combinatorics, measurable function, Ramsey theory, Steinhaus theory
\subparagraph{AMS subject classification (MSC2010)}26A03

\section{Introduction}
The background to the theme of the title is Feller's theory of
\textit{recurrent events}\index{recurrent event|(}. This goes back to
Feller\index{Feller, W.|(} in 1949 \cite{F1}, and received its first textbook
synthesis in \cite{F2} (see e.g.\ \cite{GS} for a recent treatment). One is
interested in something (`it', let us say for now---we can proceed informally
here, referring to the above for details) that happens (by default, or by
fiat) at time 0, may or may not happen at discrete times $n=1$, 2, \ldots, and
is such that its happening `resets the clock', so that if one treats this
random time as a new time-origin, the subsequent history is a probabilistic
replica of the original situation.  Motivating examples include return to the
origin in a simple random walk (or \index{coin tossing}coin-tossing game);
attaining a new maximum in a simple
\index{random walk (RW)}random walk; returning to the
initial state $i$ in a (discrete-time) Markov chain\index{Markov, A. A.!Markov
chain}.  Writing $u_{n}$ for the probability that `it' happens at time $n$ (so
$u_{0}=1$), one calls $u=(u_{n})$ a \textit{renewal sequence}\index{renewal
theory!renewal sequence}. Writing $f_{n}$ for the probability that `it'
happens \textit{for the first time} at $n>1$ ($f_{0}:=0$), $f=(f_{n})$, the
generating functions\index{probability generating function (pgf)} $U$, $F$ of $u$,
$f$ satisfy the \textit{Feller relation} $U(s)=1/(1-F(s))$.

It is always worth a moment when teaching stochastic processes to ask the
class whether time is discrete or continuous. It is both, but which aspect is
uppermost depends on how we measure, or experience, time---whether our watch
is digital or has a sweep second hand, one might say. In continuous time, one
encounters analogues of Feller's theory above in various probabilistic
contexts---e.g., the server in an $M/G/1$ \Index{queue} being idle. In the
early 1960s, the Feller theory, queueing theory (in the phase triggered by
Kendall's\index{Kendall, D. G.|(} work, \cite{Ken1}) and John
Kingman\index{Kingman, J. F. C.} were all young.  Kingman found himself drawn
to the task of creating a continuous-time version of Feller's theory of
recurrent events (see \cite{King4} for his reminiscences of this time)---a
task he triumphantly accomplished in his theory of \textit{regenerative
phenomena}\index{regenerative phenomenon}, for which his book \cite{King3}
remains the standard source. Here the role of the renewal
sequence\index{renewal theory!renewal sequence} is played by the Kingman
$p$-function\index{pfunction@p-function|(}, where $p(t)$ is the probability that the
regenerative phenomenon $\Phi $ occurs at time $t\geq 0$.

A continuous-time theory contains within itself infinitely many versions of a
discrete-time theory. For each fixed $h>0$, one obtains from a Kingman
regenerative phenomenon $\Phi $ with $p$-function $p(t)$ a
Feller\index{Feller, W.|)} recurrent event\index{recurrent event|)} (or
regenerative phenomenon in discrete time, as one would say nowadays), ${\Phi
}_{h}$ say, with renewal sequence $u_{n}(h)=p(nh)$---called the
discrete \textit{skeleton}\index{skeleton!discrete skeleton} of $\Phi $ for time-step
$h$---the $h$-skeleton, say.

While one can pass from continuous to discrete time by taking skeletons, it is
less clear how to proceed in the opposite direction---how to combine
discrete-time information for various time-steps $h$ to obtain continuous-time
information. A wealth of information was available in the discrete-time
case---for example, limit theorems for Markov chain
transition probabilities. It was tempting to seek to use such
information to study corresponding questions in continuous time, as was done
in \cite{King1}. There, Kingman\index{Kingman, J. F. C.} made novel use of the
Baire category theorem\index{Baire, R.-L.!Baire category theorem}, to extend a
result of Croft\index{Croft, H. T.} \cite{Cro}, making use of a lemma
attributed both to Golomb\index{Golomb, M.} and Gould\index{Gould, S. H.} and
to Anderson\index{Anderson, R. D.} and Fine\index{Fine, N. J.} (see \cite{NW}
for both).

While in the above we have limits at infinity through integer multiples $nx$,
we shall also be concerned with limits through positive rational multiples
$qx$ (we shall always use the notations $\lim_{n}$ and $\lim_{q}$ for
these). There are at least three settings in which such rational limits are
probabilistically relevant:

\begin{enumerate}[(i)]
\item \textit{Infinitely divisible $p$-functions}. The Kingman
$p$-functions\index{pfunction@p-function|)} form a \Index{semigroup} under pointwise
  multiplication (if $p_{i}$ come from $\Phi_{i}$ with $\Phi_{1}$, $\Phi_{2}$
  independent, $p:=p_{1}p_{2}$ comes from $\Phi :=\Phi_{1}\cap \Phi_{2}$, in
  an obvious notation). The arithmetic of this semigroup has been studied in
  detail by Kendall \cite{Ken2}.

\item \textit{Embeddability of infinitely divisible laws}. If a probability
law is infinitely divisible\index{infinite divisibility}, one can define its
$q$th convolution power for any positive rational $q$. The question then
arises as to whether one can embed these rational powers into a continuous
semigroup of real powers.  This is the question
of \textit{embeddability}\index{embeddability}, studied at length (see e.g.\
\cite[Ch.\ III]{Hey})\index{Heyer, H.} in connection with the
L\'{e}vy--Khintchine\index{Levy, P.@L\'evy, P.!L\'evy--Khintchine formula} formula
on locally compact groups\index{group!locally compact group}.

\item \textit{Embeddability of branching processes}\index{branching process}. While for simple
branching processes both space (individuals) and time (generations) are
discrete, it makes sense in considering e.g.\ the \Index{biomass} of large
populations to work with branching processes where space and/or time may be
continuous. While one usually goes from the discrete to the continuous setting
by taking limits, embedding is sometimes possible; see e.g.\ Karlin and
McGregor\index{Karlin, S.}\index{McGregor, J. L.} \cite{KMcG},
Bingham \cite{Bin}.
\end{enumerate}

In addition, (i) led Kendall\index{Kendall, D. G.|)} \cite[Th.\ 16]{Ken2} to
study \textit{sequential regular variation} (see e.g.\ \cite[\S1.9]{BGT}). The
interplay between the continuous and sequential aspects of \Index{regular
variation}, and between measurable and Baire aspects, led us to our recent
theory of \textit{topological regular variation} (see e.g.\ \cite{BOst2} and
our other recent papers), our motivation here.

In Section 2 we discuss the relation between Kingman's Theorem and the
Kestelman--Borwein--Ditor Theorem\index{Kestelman, H.|(}\index{Borwein,
D.|(}\index{Ditor, S. Z.|(} (KBD), introducing definitions and summarizing
background results which we need (including the density
topology)\index{topology!density topology}. In Section 3 we generalize to a
Baire/measurable setting the Kingman Theorem (originally stated for open
sets). Our\break (bi-)topological approach\index{topology!bitopology|(}
(borrowed from \cite{BOst7}) allows the two cases to be treated as one that,
by specialization, yields either case. (This is facilitated by the density
topology.) The theorem is applied in Section 4 to establish a theory of
`rational' skeletons parallel to Kingman's\index{Kingman, J. F. C.} integer
skeletons\index{skeleton!rational skeleton}\index{skeleton!integer skeleton}. In Section 5 we
offer a new proof of KBD in a `consecutive' format suited to proving in
Section 6 combinatorial results in the spirit of van der
Waerden's\index{Waerden, B. L. van der} celebrated theorem on arithmetic
progressions\index{arithmetic progression}. Again a bitopological (actually
`bi-metric') approach allows unification of the Baire/measurable cases. Our
work in Section 6 is based on a close reading of \cite{BHW}\index{Bergelson,
V.}\index{Hindman, N.|(}\index{Weiss, B.}, our debt to which is clear.

\section{Preliminaries}
In this section we motivate and define notions of contiguity\index{contiguity|(}. Then
we gather classical results from topology and measure
theory\index{measure theory|(} (complete
metrizability\index{metrizability!complete metrizability} and the
density topology\index{topology!density topology|(}). We begin by recalling the
following result, in which the expression `for \Index{generically all} $t$'
means for all $t$ except in a meagre\index{meagre|(} or null set according
to context. We use the terms Baire
set/function\index{Baire, R.-L.!Baire set|(}\index{Baire, R.-L.!Baire function}\index{Baire, R.-L.!Baire property}
to mean a set/function with the Baire property. Evidently, the interesting
cases are with $T$ Baire non-meagre/measurable non-null. The result in this
form is due in the measure case to Borwein and Ditor \cite{BoDi}, but was
already known much earlier albeit in somewhat weaker form by
Kestelman\index{Kestelman, H.|)}\index{Borwein, D.|)}\index{Ditor, S. Z.|)}
\cite[Th.\ 3]{Kes}, and rediscovered by Kemperman\index{Kemperman, J. H. B.} \cite{Kem} and later by
Trautner\index{Trautner, R.} \cite{Trau} (see \cite[p.\ xix and footnote p.\
10]{BGT})\index{Goldie, C. M.|(}\index{Teugels, J. L.|(}. We note a cognate result in \cite[Th.\
2.3.7]{HJ}\index{Hoffmann-Jorgensen, J.@Hoffmann-J\o rgensen, J.}.

\begin{KBD}\hskip0pt minus10mm Let\break $\{z_{n}\}\rightarrow 0$ be a
null sequence of reals. If $T$
is Baire/Lebesgue-measurable\undex{Lebesgue, H. L.!Lebesgue measurable}, then for generically
all $t\in T$ there is an infinite set $\mathbb{M}_{t}$ such that
\[
\{t+z_{m}:m\in \mathbb{M}_{t}\}\subseteq T.
\]
\end{KBD}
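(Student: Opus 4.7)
The strategy is to treat the Baire and Lebesgue-measurable halves in parallel, each reducing to the slogan: $T$ is a ``regular'' set modulo a negligible one, and the null sequence $z_{n}$ cannot prevent $t+z_{n}$ from falling back into the regular part infinitely often.

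\textit{Baire case.} Assume $T$ is non-meagre (otherwise the conclusion holds vacuously). By the Baire property write $T=U\triangle N$ with $U$ open and $N$ meagre, noting $U\neq\emptyset$. Set $E:=(T\setminus U)\cup\bigcup_{n}(N-z_{n})$, a countable union of meagre sets, hence meagre. Any $t\in T\setminus E$ lies in $U$, and since $U$ is open and $z_{n}\to 0$ we have $t+z_{n}\in U$ for all $n\geq n_{0}(t)$; also $t\notin N-z_{n}$ forces $t+z_{n}\notin N$. Hence $t+z_{n}\in U\setminus N\subseteq T$ eventually, so $\mathbb{M}_{t}$ may be taken cofinite in $\mathbb{N}$.

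\textit{Measure case.} Let $F_{N}:=\{t\in T:t+z_{m}\notin T\text{ for all }m\geq N\}$; the exceptional set is $\bigcup_{N}F_{N}$, so it suffices to show each $F_{N}$ is null. If $F_{N}$ has positive measure, pick a density point $t_{0}$ of $F_{N}$ and, invoking density with tolerance $\tfrac{1}{4}$, choose $\eta>0$ so small that $|F_{N}\cap I|>\tfrac{3}{2}\eta$ where $I=(t_{0}-\eta,t_{0}+\eta)$; then select $m\geq N$ with $|z_{m}|<\eta$. Since $F_{N}\subseteq T$ and, by definition of $F_{N}$, $F_{N}+z_{m}\subseteq T^{c}$, the sets $F_{N}$ and $F_{N}+z_{m}$ are disjoint. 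The pieces $F_{N}\cap I$ and $(F_{N}+z_{m})\cap(I+z_{m})$ are then two disjoint subsets each of measure $>\tfrac{3}{2}\eta$ inside the interval $I\cup(I+z_{m})$ of length $2\eta+|z_{m}|<3\eta$, a contradiction.

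\textit{Main obstacle.} The Baire half is essentially bookkeeping; the content lies in the measure case. Density of $T$ at a single $t_{0}$ does not by itself deliver even one index $n$ with $t_{0}+z_{n}\in T$ (consider $T=\mathbb{R}\setminus\{t_{0}+z_{n}:n\in\mathbb{N}\}$), so one must instead apply the Lebesgue density theorem to the hypothetical exceptional set $F_{N}$ and extract the contradiction from a Steinhaus-type overlap of $F_{N}$ with its translate. The bitopological language introduced later in the paper packages the two cases uniformly---measurable non-null sets being open-modulo-null in the density topology---but beneath the unification the measure side genuinely relies on this overlap argument.
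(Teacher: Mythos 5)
Your proof is correct, and it takes a genuinely different route from the paper's. The paper deliberately proves KBD as the $p=1$ case of the stronger ``consecutive'' form (Theorem~\ref{t:13}), via two ingredients: a reduction to completely metrizable subsets (``almost completeness''), followed by a nested-ball construction using Kemperman's displacement lemma to produce a single translator $t$ (Theorem~\ref{t:14}), and then the Generic Dichotomy Principle (Theorem~\ref{t:3}) to upgrade existence of one $t$ to genericity (Theorem~\ref{t:15}). By contrast, you attack genericity directly: in the Baire case by enlarging the meagre part of $T=U\triangle N$ with all the pre-images $N-z_n$, and in the measure case by showing each ``eventually-bad'' stratum $F_N=T\cap\bigcap_{m\ge N}(T^{\rm c}-z_m)$ is null via a density-point overlap argument --- essentially the original Borwein--Ditor/Kestelman line of reasoning, which is also what underlies Kemperman's Theorem~K. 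What your approach buys is brevity and transparency; what the paper's buys is uniformity (one argument specialises to both topologies), reusability (the Generic Dichotomy machinery serves several later theorems), and the consecutive strengthening needed for the van der Waerden--style applications of Section~6, which your overlap argument does not give without further work. One small remark: your measure-case computation implicitly uses that $I$ and $I+z_m$ overlap because $|z_m|<\eta<|I|$, so that $|I\cup(I+z_m)|=2\eta+|z_m|$ exactly rather than merely $\le 4\eta$; that is correct but worth flagging, since without the overlap the bound $3\eta$ would not be available.
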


We give a new unified proof of the measure and Baire cases in Section 5, based
on `almost-complete
metrizability\index{metrizability!almost-complete metrizability}'
(= \textit{almost complete} + \textit{complete metrizability}, see below) and
a \textit{Generic Dichotomy} given in Section 3; earlier unification was
achieved through a bitopological approach (as here to the
Kingman\index{Kingman, J. F. C.} Theorem) in \cite{BOst7}. This result is a
theorem about additive infinite combinatorics\index{additive
combinatorics}. It is of fundamental and unifying importance in contexts where
additive structure is key; its varied applications include proofs of classical
results such as Ostrowski's\index{Ostrowski, A. M.} Theorem on the continuity
of Baire/Lebesgue convex\index{convexity} (and so additive)
functions\index{additive function} (cf.\ \cite{BOst8}), a plethora of results
in the theory of \index{subadditivity}subadditive functions
(cf.\ \cite{BOst1,BOst4}), the Steinhaus\index{Steinhaus, H. D.} Theorem on
distances
\cite{BOst8,BOst6} and the Uniform Convergence Theorem of \Index{regular
variation} \cite{BOst3}. Its generalizations to normed
groups\index{group!normed group} may be used to prove the Uniform Boundedness
Theorem\index{uniform boundedness theorem} (see \cite{Ost}). Recently it has
found applications to additive combinatorics in the area of Ramsey
Theory\index{Ramsey, F. P.} (for which see \cite{GRS,HS})\index{Graham,
R. L.}\index{Rothschild, B. L.}\index{Spencer, J. H.}\index{Hindman,
N.|)}\index{Strauss, D.}, best visualized in the language of \Index{colour}:
one seeks monochromatic structures in finitely coloured situations. Two
examples are included in Section 6.

The KBD theorem is about
\index{embedding!shift-embedding}shift-embedding of subsequences of
a null sequence $\{z_{n}\}$ into a \textit{single} set $T$ with an
assumption of
\textit{regularity} (Baire/measurable). Our generalizations in Section 3 of a
theorem of Kingman's have been motivated by the wish to establish `multiple
embedding' versions of KBD: we seek conditions on a sequence $\{z_{n}\}$ and a
\textit{family} of sets $\{T_{k}\}_{k\in \omega }$ which together guarantee
that \textit{one} shift embeds (different) subsequences of $\{z_{n}\}$ into
\textit{all} members of the family.

Evidently, if $t+z_{n}$ lies in several sets infinitely often, then the sets
in question have a common limit point, a sense in which they
are \index{contiguity|)}contiguous at $t$. Thus \textit{contiguity conditions}
are one goal, the other two being
\textit{regularity conditions} on the family, and \textit{admissibility
conditions} on the null sequences.

We view the original Kingman\index{Kingman, J. F. C.} Theorem as studying
contiguity at infinity, so that divergent sequences $z_{n}$ (i.e.\ with
$z_{n}\rightarrow +\infty )$ there replace the null sequences of KBD\null. The
theorem uses
\textit{openness} as a regularity condition on the
family, \textit{cofinality}\index{cofinality} at infinity
(e.g.\ \textit{unboundedness} on the right) as the simplest contiguity
condition at infinity, and
\begin{equation}
\frac{z_{n+1}}{z_{n}}\rightarrow 1\text{ (multiplicative form)},\quad
z_{n+1}-z_{n}\rightarrow 0\text{ (additive form)}  \tag{$*$}  \label{*}
\end{equation}
as the admissibility condition on the divergent sequence $z_{n}$ (\eqref{*}
follows from \Index{regular variation} by Weissman's\index{Weissman, I.}
Lemma, \cite[Lemma 1.9.6]{BGT})\index{Goldie, C. M.|)}\index{Teugels, J. L.|)}.  Taken together, these three guarantee
multiple embedding (at infinity).

One can switch from $\pm \infty $ to 0 by an inversion $x\rightarrow 1/x$,
and thence to any $\tau $ by a shift $y\rightarrow y+\tau$. \textit{Openness}
remains the \textit{regularity} condition, a property of
\textit{density at zero }becomes the analogous \textit{admissibility}
condition on null sequences, and cofinality (or accumulation) at $\tau $ the
contiguity condition. The transformed theorem then asserts that for
admissible null sequences $\zeta _{n}$ there exists a scalar $\sigma $ such
that the sequence $\sigma \zeta _{n}+\tau $ has subsequences in all the open
sets $T_{k}$ provided these all accumulate at $\tau $.

In the next section, we will replace Kingman's\index{Kingman, J. F. C.}
regularity condition of openness by the Baire property\index{Baire,
R.-L.!Baire property}, or alternatively measurability, to obtain two versions
of Kingman's theorem---one for measure and one for category.  We develop the
regularity theme bitopologically, working with two topologies, so as to deduce
the measure case from the Baire case by switching from the Euclidean to the
density topology\index{topology!density topology|)}.

\begin{DN}We use
the notation $B_{r}(x):=\{y:|x-y|<r\}$ and $\omega :=\{0,1,2,\dots\}$.  Likewise
for $a\in A\subseteq \mathbb{R}$ and metric $\rho =\rho _{A}$ on $A$,
$B_{r}^{\rho }(a):=\{y\in A:\rho (a,y)<r\}$ and $\cl_{A}$ denotes closure in
$A$. For $S$ given, put $S^{>m}=S\backslash B_{m}(0)$. $\mathbb{R}_{+}$
denotes the (strictly) positive reals. When we regard $\mathbb{R}_{+}$ as a
multiplicative group, we write
\[
A\cdot B:=\{ab:a\in A,\ b\in B\},\qquad A^{-1}:=\{a^{-1}:a\in A\},
\]
for $A$, $B$ subsets of $\mathbb{R}_{+}$.

Call a Baire set $S$ \textit{essentially unbounded}\index{essentially (un)bounded} if for each $m\in
\mathbb{N}$ the set $S^{>m}$ is non-meagre\index{meagre}. This may be interpreted in the
sense of the metric (Euclidean) topology, or as we see later in the measure
sense by recourse to the density topology. To distinguish the two, we will
qualify the term by referring to the category/metric or the measure sense.

Say that a set $S\subset \mathbb{R}_{+}$ \textit{accumulates
essentially}\index{accumulate essentially} at $0$ if $S^{-1}$ is essentially
unbounded. (In \cite{BHW} such sets are called measurably/Baire \textit{large}
at $0$.) Say that $S\subset \mathbb{R}_{+}$ \textit{accumulates essentially}
at $t$ if $(S-t)\cap \mathbb{R}_{+}$ accumulates essentially at $0$.
\end{DN}

We turn now to some topological notions. Recall (see e.g. \cite[4.3.23 and
24]{Eng}) that a metric space $A$ is \textit{completely
metrizable}\index{metrizability!complete metrizability} iff it is
a $\mathcal{G}_{\delta }$ subset\index{Gdelta@$G_\delta$!set} of its \Index{completion} (i.e.\ $A=\bigcap_{n\in
\omega }G_{n}$ with each $G_{n}$ open in the completion of $A$), in which
case it has an \Index{equivalent metric} under which it is complete. Thus a
$\mathcal{G}_{\delta }$ subset $A$ of the line has a metric $\rho =\rho _{A}$,
equivalent to the Euclidean metric, under which it is complete. (So for each
$a\in A$ and $\varepsilon >0$ there exists $\delta >0$ such that $B_{\delta
}(a)\subseteq B_{\varepsilon }^{\rho }(a)$, which enables the construction
of sequences with $\rho $-limit guaranteed to be in $A$.)

This motivates the definition below, which allows us to capture a feature of
measure-category duality: both exhibit $\mathcal{G}_{\delta
}$ \textit{inner-regularity}, modulo sets which we are prepared to
neglect. (The definition here takes advantage of the fact that $\mathbb{R}$ is
complete; for the general metric group\index{group!metric group} context see \cite[Section 5]{BOst6}.)

\begin{defn}Call $A\subset \mathbb{R}$ \textit{almost complete}\index{almost complete} (in
category/measure) if
\begin{enumerate}[(i)]
\item there is a \Index{meagre} set $N$ such that $A\backslash N$ is a
$\mathcal{G}_{\delta }$, or

\item for each $\varepsilon >0$ there is a measurable set $N$ with
$|N|<\varepsilon $ and $A\backslash N$ a $\mathcal{G}_{\delta }$.
\end{enumerate}
\end{defn}

Thus $A$ almost complete is Baire resp.\ measurable. A bounded non-null
measurable subset $A$ is almost complete: for each $\varepsilon >0$ there is
a compact (so $\mathcal{G}_{\delta }$) subset $K$ with $|A\backslash
K|<\varepsilon $, so we may take $N=A\backslash K$. Likewise a Baire
non-meagre set is almost complete---this is in effect a restatement of
Baire's Theorem\index{Baire, R.-L.!Baires theorem@Baire's theorem}:

\begin{Baire}For $A\subseteq \mathbb{R}$ Baire non-meagre there is a
meagre set $M$ such that $A\backslash M$ is completely metrizable.
\end{Baire}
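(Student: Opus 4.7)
The plan is to exploit the characterisation of the Baire property via symmetric difference with an open set, then enlarge the meagre part to an $F_{\sigma}$ so that what remains is a $\mathcal{G}_{\delta }$, at which point the already-cited complete-metrizability result applies.

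First, since $A$ has the Baire property, write $A=U\,\triangle \,M_{0}$ where $U\subseteq \mathbb{R}$ is open and $M_{0}$ is meagre. Every meagre set is contained in a meagre $F_{\sigma }$ set: take $M\supseteq M_{0}$ with $M=\bigcup_{n}F_{n}$, each $F_{n}$ closed and nowhere dense. Because $A\,\triangle \,U\subseteq M$, a direct check of the symmetric difference shows $A\setminus M=U\setminus M$; indeed any $x\in U\setminus M$ lies in $A$ (else $x\in U\setminus A\subseteq M$), and any $x\in A\setminus M$ lies in $U$ (else $x\in A\setminus U\subseteq M$).

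Next observe that $U\setminus M=U\cap (\mathbb{R}\setminus M)$ is the intersection of an open set (hence a $\mathcal{G}_{\delta }$) with the complement of an $F_{\sigma }$ (hence also a $\mathcal{G}_{\delta }$); so $A\setminus M$ is a $\mathcal{G}_{\delta }$ subset of $\mathbb{R}$. By the theorem of Alexandrov--Kuratowski recalled just above the statement---a metric space is completely metrizable iff it is a $\mathcal{G}_{\delta }$ in its completion, and $\mathbb{R}$ itself is complete---the set $A\setminus M$ carries an equivalent metric under which it is complete, i.e.\ it is completely metrizable.

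Non-meagreness of $A$ plays no role in the construction; it only guarantees that the conclusion is not vacuous, since in that case $U$ must be non-empty (otherwise $A\subseteq M$ would be meagre). The only point requiring any care is the bookkeeping for the symmetric difference, and the standard fact that every meagre set is contained in a meagre $F_{\sigma }$---both routine---so there is no serious obstacle; the proposition is essentially a packaging of Alexandrov's theorem together with the definition of the Baire property.
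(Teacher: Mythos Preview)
Your proof is correct and follows essentially the same route as the paper's: both start from the Baire-property decomposition (the paper writes $A=(U\setminus M_{0})\cup M_{1}$, you use the equivalent symmetric-difference form $A=U\,\triangle\,M_{0}$), enlarge the meagre part to a meagre $F_{\sigma}$ by taking closures of nowhere-dense sets, and then observe that what remains is a $\mathcal{G}_{\delta}$ and hence completely metrizable. Your bookkeeping via $A\setminus M=U\setminus M$ is arguably a touch cleaner than the paper's, which exhibits a $\mathcal{G}_{\delta}$ subset $A_{0}\subseteq A$ and leaves implicit that $A\setminus A_{0}$ is meagre, but the substance is identical.
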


\begin{proof}For $A\subseteq \mathbb{R}$ Baire non-meagre we have
$A=(U\backslash M_{0})\cup M_{1}$ with $M_{i}$ meagre and $U$ a non-empty
open set. Now $M_{0}=\bigcup_{n\in \omega }N_{n}$ with $N_{n}$ \Index{nowhere
dense}; the closure $F_{n}:=\bar{N}_{n}$ is also nowhere dense (and the
complement $E_{n}=\mathbb{R}\backslash F_{n}$ is dense, open). The set
$M_{0}^{\prime }=\bigcup_{n\in \omega }F_{n}$ is also meagre, so
$A_{0}:=U\backslash M_{0}^{\prime }=\bigcap_{n\in \omega }U\cap
E_{n}\subseteq A$. Taking $G_{n}:=U\cap E_{n}$, we see that $A_{0}$ is
completely metrizable.
\end{proof}

The tool whereby we interpret measurable functions as Baire functions is
\textit{refinement} of the usual metric (Euclidean) topology of the line
$\mathbb{R}$ to a non-metric one: the \textit{density
topology}\index{topology!density topology|(} (see e.g.\
\cite{Kech,LMZ,CLO}). Recall that for $T$ measurable, $t$ is
a (metric) \index{density point|(}density point of $T$ if
$\lim_{\delta \rightarrow 0}|T\cap I_{\delta }(t)|/\delta =1$, where
$I_{\delta }(t)=(t-\delta /2,t+\delta /2)$.  By the Lebesgue Density Theorem\index{Lebesgue, H. L.!Lebesgue density theorem}
almost all points of $T$ are density points (\cite[Section
61]{Hal}, \cite[Th.\ 3.20]{Oxt}, or \cite{Goff}). A set $U $ is $d$-open
(density-open = open in the density topology $d$) if (it is measurable and)
each of its points is a density point of $U$. We mention five properties:

\begin{enumerate}[(i)]
\item The density topology is finer than (contains) the Euclidean topology
\cite[17.47(ii)]{Kech}.

\item A set is Baire in the density topology iff it is (Lebesgue) measurable
\cite[17.47(iv)]{Kech}.

\item A Baire set is meagre in the density topology iff it is null
\cite[17.47(iii)]{Kech}. So (since a countable union of null sets is null), the
conclusion of the Baire theorem holds for the line under $d$.

\item $(\mathbb{R},d)$ is a \textit{Baire} space\index{Baire, R.-L.!Baire
space}, i.e.\ the conclusion of the Baire theorem holds
(cf.\ \cite[3.9]{Eng}).

\item A function is $d$-continuous iff it is approximately
continuous\index{approximate continuity} in
Denjoy's\index{Denjoy, A.} sense (\cite{Den}; \cite[pp.\ 1, 149]{LMZ}).
\end{enumerate}

The reader unfamiliar with the density topology may find it helpful to recall
Littlewood's\index{Littlewood, J. E.} Three Principles (\cite[Ch.\
4]{Lit}, \cite[Section 3.6, p.\ 72]{Roy}): general situations are `nearly' the
easy situations---i.e.\ are easy situations modulo small
sets. Theorem \ref{t:0} below is in this spirit. We refer now to Littlewood's
Second Principle, of a measurable function being continuous on nearly all of
its domain, in a form suited to our $d$-topology context.

\begin{Lusin}\index{Luzin, N. N.}For $f:\mathbb{R}_{+}\rightarrow \mathbb{R}$ measurable,
there is a density-open set $S$ which is almost all of $\mathbb{R}_{+}$
and an increasing decomposition $S:=\bigcup_{m}S_{m}$
into density-open sets $S_{m}$ such that each $f|S_{m}$
is continuous in the usual sense.
\end{Lusin}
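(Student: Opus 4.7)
The plan is to bootstrap from the classical (closed-set) form of Lusin's theorem to the required $d$-topological refinement, in three stages: extract an increasing sequence of closed sets on which $f$ is continuous in the usual sense, pass to $d$-open refinements via the Lebesgue Density Theorem, and take the union.

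First I would apply the classical Lusin theorem with $\varepsilon = 2^{-m}$ to obtain, for each $m\in\mathbb{N}$, a closed set $F_m\subseteq\mathbb{R}_+$ with $|\mathbb{R}_+\setminus F_m|<2^{-m}$ and $f|F_m$ continuous in the usual sense. To arrange monotonicity without losing continuity I would set
\[
E_m := \bigcap_{k\ge m} F_k.
\]
Each $E_m$ is closed (so in particular a $\mathcal{G}_\delta$), the sequence is increasing in $m$, and $|\mathbb{R}_+\setminus E_m|\le \sum_{k\ge m}2^{-k}=2^{-m+1}$, so $|\mathbb{R}_+\setminus\bigcup_m E_m|=0$. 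Since $E_m\subseteq F_m$, the restriction $f|E_m$ inherits continuity.

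Next I would refine each $E_m$ to its density-interior
\[
S_m := \{t\in E_m : t\text{ is a density point of }E_m\}.
\]
By the Lebesgue Density Theorem the set $E_m\setminus S_m$ is null, so $S_m$ and $E_m$ have the same density points; hence every point of $S_m$ is a density point of $S_m$, i.e.\ $S_m$ is $d$-open. Monotonicity $S_m\subseteq S_{m+1}$ follows from $E_m\subseteq E_{m+1}$, since a density point of a set is \emph{a fortiori} a density point of any superset. Because $S_m\subseteq E_m$, the restriction $f|S_m$ is continuous in the usual sense. Finally I would take $S:=\bigcup_m S_m$; as a union of $d$-open sets $S$ is $d$-open, the decomposition is increasing, and $\mathbb{R}_+\setminus S$ is null because $\bigcup_m E_m$ is conull and each $E_m\setminus S_m$ is null.

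The only point requiring care, and the one I would expect a referee to scrutinise, is the verification that $S_m$ is $d$-open rather than merely contained in the set of density points of $E_m$: one must observe that deleting a null set preserves the density-point structure, so that points of $S_m$ are density points of $S_m$ itself, not only of $E_m$. Everything else is a routine assembly of the Lebesgue Density Theorem, the classical Lusin theorem, and the definition of the density topology recorded in items (i)--(iii) above.
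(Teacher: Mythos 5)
Your proof is correct and follows essentially the same route as the paper: apply the classical Lusin theorem, pass to the density points of the resulting closed sets (observing via the Lebesgue Density Theorem that removing the null set of non-density points yields a $d$-open set), and take the union. The only cosmetic difference is that the paper cites a form of Lusin's theorem that delivers an increasing sequence of compact sets directly, whereas you manufacture the monotonicity by hand via $E_m=\bigcap_{k\ge m}F_k$; the substance is the same.
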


\begin{proof} By a theorem of Lusin (see e.g.\ \cite{Hal}), there is an
increasing sequence of (non-null) compact sets $K_{n}$ ($n=1$, 2, \dots)
covering almost all of $\mathbb{R}_{+}$ with the function $f$ restricted to
$K_{n}$ continuous on $K_{n}$. Let $S_{n}$ comprise the density points of
$K_{n}$, so $S_{n}$ is density-open, is almost all of $K_{n}$ (by the
Lebesgue Density Theorem\index{Lebesgue, H. L.!Lebesgue density theorem}) and $f$ is continuous on $S_{n}$. Put
$S:=\bigcup_{m}S_{m}$; then $S$ is almost all of $\mathbb{R}_{+}$, is
density-open and $f|S_{m}$ is continuous for each $m$.
\end{proof}

Two results, Theorem S below and Theorem \ref{t:1} in the next section, depend
on the following consequence of Steinhaus's\index{Steinhaus, H. D.} theorem
concerning the existence of interior points of $A\cdot B^{-1}$ (\cite{St},
cf.\ \cite[Th.\ 1.1.1]{BGT})\index{Goldie, C. M.}\index{Teugels, J. L.} for $A$, $B$ measurable non-null. The first is in
multiplicative form a sharper version of Sierpi\'{n}ski's result that any two
non-null measurable sets realize a rational distance.

\begin{MS}\index{Sierpinski, W.@Sierpi\'nski, W.|(}For $a$, $b$ density
points\index{density point|)} of their respective measurable sets $A$, $B$ in
$\mathbb{R}_{+}$ and for $n=1$, $2$, \dots, there exist positive rationals
$q_{n}$ and points $a_{n}$, $b_{n}$ converging to $a$, $b$ through $A$, $B$
respectively such that $b_{n}=q_{n}a_{n}$.
\end{MS}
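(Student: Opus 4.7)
The plan is to adapt the classical Sierpi\'nski argument to the multiplicative group $(\mathbb{R}_+,\cdot)$ and localize it at the density points $a$ and $b$. The key input is the multiplicative form of the Steinhaus theorem, referenced in the paragraph preceding the statement: for any measurable non-null subsets $A',B'\subseteq \mathbb{R}_+$, the quotient set $B'\cdot (A')^{-1}$ has non-empty interior in $\mathbb{R}_+$. Combined with the density of $\mathbb{Q}_+$ in $\mathbb{R}_+$, this will produce the required rational ratio.

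The concrete steps are as follows. First, for each $n\in\mathbb{N}$ choose radii $r_n\downarrow 0$ (say $r_n=1/n$) and set
\[
A_n := A\cap B_{r_n}(a),\qquad B_n := B\cap B_{r_n}(b).
\]
Since $a$ is a density point of $A$, the sets $A_n$ are measurable with $|A_n|>0$ for all sufficiently large $n$; similarly for $B_n$. Applying the multiplicative Steinhaus theorem to the measurable non-null pair $A_n,B_n\subseteq \mathbb{R}_+$ yields an open interval $I_n\subseteq B_n\cdot A_n^{-1}$. By the density of the positive rationals in $\mathbb{R}_+$, pick $q_n\in I_n\cap\mathbb{Q}_+$. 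The inclusion $q_n\in B_n\cdot A_n^{-1}$ then furnishes points $a_n\in A_n$ and $b_n\in B_n$ with $b_n=q_n a_n$, as required.

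Finally, $a_n\in A$ with $|a_n-a|<r_n\to 0$, so $a_n\to a$ through $A$, and likewise $b_n\to b$ through $B$. This yields the sequences claimed in the lemma; note in passing that $q_n=b_n/a_n\to b/a$, although the statement itself only asks for the embedding property, not this limit.

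The only non-routine ingredient is the multiplicative Steinhaus-type assertion that $B_n\cdot A_n^{-1}$ has non-empty interior. This is the ``main obstacle'' but is really a direct transcription of the standard Steinhaus theorem via the homeomorphism of $(\mathbb{R}_+,\cdot)$ with $(\mathbb{R},+)$ given by the logarithm, which carries Lebesgue-null sets to Lebesgue-null sets locally; the authors flag this identification explicitly. Everything else is a routine localization using the density-point hypothesis and a density-of-rationals argument, so no delicate estimates are needed.
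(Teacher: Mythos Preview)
Your proof is correct and is essentially identical to the paper's own argument: both localize to the balls $A\cap B_{1/n}(a)$ and $B\cap B_{1/n}(b)$, invoke the multiplicative Steinhaus theorem to find interior points in $B_n\cdot A_n^{-1}$, and then pick a rational $q_n$ there to obtain $b_n=q_n a_n$ with the required convergence. The only cosmetic difference is that you hedge with ``for all sufficiently large $n$'', whereas in fact density of $a$ in $A$ forces $|A\cap B_\varepsilon(a)|>0$ for \emph{every} $\varepsilon>0$ (monotonicity in $\varepsilon$).
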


\begin{proof}For $n=1$, 2, \dots\ and the consecutive values $\varepsilon =1/n$
the sets $B_{\varepsilon }(a)\cap A$ and $B_{\varepsilon }(b)\cap B$ are
measurable non-null, so by Steinhaus's\index{Steinhaus, H. D.} theorem the set
$[B\cap B_{\varepsilon }(b)]\cdot \lbrack A\cap B_{\varepsilon }(a)]^{-1}$
contains interior points and so in particular a rational point $q_{n}$. Thus
for some $a_{n}\in B_{\varepsilon }(a)\cap A$ and $b_{n}\in B_{\varepsilon
}(b)\cap B$ we have $q_{n}=b_{n}a_{n}^{-1}$, and as $|a-a_{n}|<1/n$ and
$|b-b_{n}|<1/n$, $a_{n}\rightarrow a$, $b_{n}\rightarrow b$.
\end{proof}

\begin{rems}
\begin{enumerate}[1.]
\item For the purposes of Theorem \ref{t:2} below, we observe that
$q_{n}$ may be selected arbitrarily large, for fixed $a$, by taking $b$
sufficiently large (since $q_{n}\rightarrow ba^{-1}$).

\item The Lemma addresses $d$-open sets but also holds in the metric topology
(the proof is similar but simpler), and so may be restated
bitopologically (from the viewpoint of \cite{BOst7}) as follows.
\end{enumerate}
\end{rems}

\begin{Sier}\index{Sierpinski, W.@Sierpi\'nski, W.|)}For
$\mathbb{R}_{+}$ with either the Euclidean or the density
topology\index{topology!density topology|)},
if $a$, $b$ are respectively in the open sets $A$, $B$,
then for $n=1$, $2$, \dots\ there exist positive rationals
$q_{n}$ and points $a_{n}$, $b_{n}$ converging metrically to
$a$, $b$ through $A$, $B$ respectively such that
\[
b_{n}=q_{n}a_{n}.
\]
\end{Sier}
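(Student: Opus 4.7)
The plan is to verify the two cases of the bitopological statement separately, noting that the density-topology case is essentially a restatement of Lemma S while the Euclidean case is more elementary.

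For the density-topology case, suppose $A, B$ are $d$-open with $a \in A$, $b \in B$. By the definition of the density topology, $A$ and $B$ are Lebesgue measurable, $a$ is a density point of $A$, and $b$ is a density point of $B$. But these are exactly the hypotheses of the Multiplicative Sierpi\'nski Lemma (Lemma S), which directly produces positive rationals $q_n$ and points $a_n \to a$ through $A$, $b_n \to b$ through $B$ with $b_n = q_n a_n$. The metric convergence $a_n \to a$, $b_n \to b$ obtained there gives precisely what is required.

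For the Euclidean case, suppose $A, B$ are Euclidean-open with $a \in A$, $b \in B$. Fix $n \in \mathbb{N}$ and choose $\varepsilon = \varepsilon_n > 0$ small enough that $\varepsilon < 1/n$ and $B_{\varepsilon}(a) \subseteq A$, $B_{\varepsilon}(b) \subseteq B$. By density of the rationals in $\mathbb{R}_+$, pick a positive rational $q_n$ with $|q_n - b/a| < \varepsilon/(2a)$, so that $|q_n a - b| < \varepsilon/2$. Now choose $a_n \in B_{\delta}(a)$ with $\delta := \min(\varepsilon, \varepsilon/(2 q_n))$; then $a_n \in A$ (since $\delta \le \varepsilon$) and
\[
|q_n a_n - b| \le q_n |a_n - a| + |q_n a - b| < \varepsilon/2 + \varepsilon/2 = \varepsilon,
\]
so $b_n := q_n a_n \in B_{\varepsilon}(b) \subseteq B$. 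As $\varepsilon_n \to 0$, we get $a_n \to a$ through $A$ and $b_n \to b$ through $B$, with $b_n = q_n a_n$ as required.

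Since both cases cover the hypothesis ``$A, B$ open in either the Euclidean or the density topology'', the bitopological reformulation holds. There is essentially no technical obstacle: the substantive content lies in Lemma S (which relies on Steinhaus), and the Euclidean case is a routine continuity/density-of-rationals argument that bypasses Steinhaus entirely, in line with the remark preceding the statement.
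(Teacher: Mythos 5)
Your proposal is correct, and the two cases are handled appropriately. For the density-topology case you correctly observe that Theorem S is a literal restatement of Lemma S: density-openness of $A$, $B$ means each point is a density point, so the hypotheses of Lemma S are met verbatim. This is exactly how the paper handles it. For the Euclidean case, however, the paper's (very terse) indication is that the proof is ``similar but simpler'' --- the natural reading being that one keeps the structure of Lemma S and simply notes that $[B\cap B_\varepsilon(b)]\cdot[A\cap B_\varepsilon(a)]^{-1}$ is automatically open, being a product of open sets in the topological group $\mathbb{R}_+^*$, so that Steinhaus's theorem is not needed to find the rational $q_n$. You instead give a direct calculation: approximate $b/a$ by rationals $q_n$, control the perturbation, and observe that $b_n:=q_na_n$ lands in the prescribed ball inside $B$. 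Both routes are valid. Yours sidesteps even the group-theoretic openness observation and so is perhaps more elementary and self-contained; the paper's implied route has the virtue of exhibiting the two cases as instances of one scheme (Steinhaus-type interior-point property), which sits better with the bitopological framing. One cosmetic remark: you might note explicitly that $B_\delta(a)$ is nonempty and contained in $A$, so any choice of $a_n$ there works (taking $a_n=a$ already suffices); this makes the final ``converging through $A$'' claim immediate.
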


\section{A bitopological Kingman theorem}\index{Kingman, J. F. C.|(}
We begin by simplifying essential unboundedness modulo null/meagre sets.

\addtocounter{theorem}{-1}

\begin{theorem}\label{t:0}In $\mathbb{R}_{+}$ with the Euclidean
or density topology, for $S$ Baire/measurable and
\index{essentially (un)bounded}essentially unbounded there exists an open/\break density-open unbounded $G$
and meagre/null $M$ with $G\backslash M\subset S$.
\end{theorem}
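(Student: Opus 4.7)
My plan is to treat both cases in a single argument by letting $\tau$ denote the relevant topology on $\mathbb{R}_{+}$: the Euclidean topology in the category case, the density topology $d$ in the measure case. With this convention, properties (ii) and (iii) of $d$ recorded in Section~2 translate the measure hypotheses into Baire ones: ``$S$ measurable'' becomes ``$S$ $\tau$-Baire'', ``$|N|=0$'' becomes ``$N$ $\tau$-meagre'', and essential unboundedness in the measure sense becomes ``$S^{>m}$ $\tau$-non-meagre for every $m\in\mathbb{N}$''. So it suffices to prove the unified statement: whenever $S$ is $\tau$-Baire with every $S^{>m}$ $\tau$-non-meagre, there exist a $\tau$-open unbounded $G\subseteq\mathbb{R}_{+}$ and a $\tau$-meagre $M$ with $G\setminus M\subseteq S$.

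First I would invoke the defining representation of a $\tau$-Baire set: pick a $\tau$-open $U$ with $N:=S\triangle U$ being $\tau$-meagre, and simply set $G:=U$ and $M:=N$. The inclusion $G\setminus M\subseteq S$ is then immediate, since $U\setminus S\subseteq N$. The set $M$ is also of the required type in both cases, being meagre in the Euclidean case and null in the measure case (property (iii)).

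Next I would show $G$ is unbounded. Fix $m\in\mathbb{N}$. The Euclidean-open tail $(m,\infty)$ is also $d$-open by property (i), so $U^{>m}=U\cap(m,\infty)$ is $\tau$-open in both topologies. The inclusion $S\subseteq U\cup N$ yields $S^{>m}\subseteq U^{>m}\cup N$; since $N$ is $\tau$-meagre while $S^{>m}$ is $\tau$-non-meagre by hypothesis, $U^{>m}$ must be $\tau$-non-meagre, hence in particular non-empty. As this holds for every $m$, $G$ meets every right-tail of $\mathbb{R}_{+}$ and is therefore unbounded.

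I do not foresee a genuine obstacle. Beyond the routine Baire decomposition, the entire content is the bitopological observation --- already encoded in properties (i)--(iv) of the density topology --- that the measure case \emph{is} the category case once the ambient topology is refined to $d$. The only small point to keep in mind is that $(m,\infty)$ is open in both topologies, which is what allows the unboundedness step to stay inside the chosen $\tau$-category.
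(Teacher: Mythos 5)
Your proof is correct. The key identity $U\setminus N = U\cap S$ (since $U\setminus S\subseteq S\triangle U = N$) gives $G\setminus M\subseteq S$ immediately, and the unboundedness argument --- that $S^{>m}\subseteq U^{>m}\cup N$ forces $U^{>m}$ to be $\tau$-non-meagre and hence non-empty --- is exactly right, using that $(m,\infty)$ is open in both topologies.

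Your route is genuinely different from, and simpler than, the paper's. The paper constructs an increasing sequence of integers $m_0<m_1<\cdots$ such that each $(m_n,m_{n+1})\cap S$ is non-meagre, applies the Baire decomposition \emph{locally} to each such slab to get $G_n$, $M_n\subseteq(m_n,m_{n+1})$, and then takes $G=\bigcup_n G_n$, $M=\bigcup_n M_n$. Disjointness of the slabs then yields $G\cap(m_n,m_{n+1})=G_n$ and $M\cap(m_n,m_{n+1})=M_n$, from which $G\setminus M\subseteq S$ follows, and unboundedness comes from each $G_n$ being non-empty. You instead apply the Baire decomposition \emph{globally} once and extract unboundedness directly from essential unboundedness; this bypasses the inductive slab construction entirely. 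The only thing the paper's version buys is that its $G$ and $M$ are confined to a union of bounded intervals on which $S$ is locally non-meagre, but nothing in the statement (or the subsequent use of Theorem~0) requires that extra structure, so your argument is a clean improvement for the purpose at hand.
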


\begin{proof}Choose integers $m_{n}$ inductively with $m_{0}=0$ and
$m_{n+1}>m_{n}$ the least integer such that $(m_{n},m_{n+1})\cap S$ is
non-meagre; for given $m_{n}$ the integer $m_{n+1}$ is well-defined, as
otherwise for each $m>m_{n}$ we would have $(m_{n},m)\cap S$ meagre, and so
also
\[
(m_{n},\infty )\cap S=\bigcup_{m>m_{n}}(m_{n},m)\cap S\quad\text{meagre},
\]
contradicting $S$ essentially unbounded. Now, as $(m_{n},m_{n+1})\cap S$ is
Baire/ measurable, we may choose $G_{n}$ open/density-open and
$M_{n}$, $M_{n}^{\prime }$ meagre subsets of $(m_{n},m_{n+1})$ such that
\[
((m_{n},m_{n+1})\cap S)\cup M_{n}=G_{n}\cup M_{n}^{\prime }.
\]
Hence $G_{n}$ is non-empty. Put $G:=\bigcup_{n}G_{n}$ and
$M:=\bigcup_{n}M_{n}$. Then $M$ is meagre and $G$ is open unbounded and,
since $M\cap (m_{n},m_{n+1})=M_{n}$ and $G\cap (m_{n},m_{n+1})=G_{n}$,
\[
G\backslash M=\bigcup_{n}G_{n}\backslash
M=\bigcup_{n}G_{n}\backslash M_{n}\subset
\bigcup_{n}(m_{n},m_{n+1})\cap S=S,
\]
as asserted.
\end{proof}

\begin{WA}\index{Archimedes of Syracuse!weakly Archimedean property|(}Let
$\mathbb{I}$ be $\mathbb{N}$ or $\mathbb{Q}$, with the ordering induced from
the reals. Our purpose will be to take limits through subsets $J$ of
$\mathbb{I}$ which are \textit{unbounded} on the right (more briefly:
unbounded). According as $\mathbb{I}$ is $\mathbb{N}$ or $\mathbb{Q}$, we will
write $n\rightarrow \infty $ or $q\rightarrow \infty $. Denote by $X$ the line
with either the metric or density topology and say that a family
$\{h_{i}:i\in \mathbb{I}\}$ of self-homeomorphisms\index{homeomorphism|(} of the
topological space $X$ is \textit{weakly Archimedean} if for each non-empty
open set $V$ in $X$ and any $j\in \mathbb{I}$ the open set
\[
U_{j}(V):=\bigcup_{i\geq j}h_{i}(V)
\]
meets every essentially unbounded set in $X$.
\end{WA}

\begin{theorem}[implicit in {\cite[Th.\ 1.9.1(i)]{BGT}}]\label{t:1}\index{Goldie, C. M.|(}\index{Teugels, J. L.|(}
In the multiplicative group of positive reals $\mathbb{R}_{+}^{\ast }$
with Euclidean topology, the functions $h_{n}(x)=d_{n}x$
for $n=1$, $2$, \dots, are homeomorphisms and $\{h_{n}:n\in N\}$
is weakly Archimedean, if $d_{n}$ is divergent and the
multiplicative form of\/ {\rm (\ref{*})} holds. For any interval
$J=(a,b)$ with $0<a<b$ and any $m$,
\[
U_{m}(J):=\bigcup_{n\geq m}d_{n}J
\]
contains an infinite half-line, and so meets every unbounded open
set. Similarly this is the case in the additive group of reals $\mathbb{R}$
with $h_{n}(x)=d_{n}+x$ and $U_{m}(J)=\bigcup_{n\geq m}(d_{n}+J)$.
\end{theorem}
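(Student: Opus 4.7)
That each $h_n$ is a homeomorphism of the multiplicative (resp.\ additive) group is immediate, since $d_n>0$ (resp.\ $d_n\in\mathbb{R}$) and the maps $x\mapsto d_n x$ (resp.\ $x\mapsto d_n+x$) are strictly monotone continuous bijections of $\mathbb{R}_{+}^{\ast }$ (resp.\ $\mathbb{R}$) with continuous inverses. The substance of the theorem lies in the half-line containment
\[
U_m(J)=\bigcup_{n\geq m} d_n J\supseteq (R,\infty)
\]
for some $R=R(m,J)$, from which the weakly Archimedean property follows readily: any non-empty open $V$ contains an open interval $J$, so $U_j(V)\supseteq U_j(J)\supseteq(R,\infty)$, and any essentially unbounded set $S$ has $S\cap(R,\infty)$ non-meagre (resp.\ non-null) since $S^{>R}$ is, so in particular $S$ meets $U_j(V)$.

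The plan for the half-line containment, in the multiplicative case, is as follows. Set $r:=b/a>1$, fix any $\rho$ with $1<\rho<r$, and use $d_{n+1}/d_n\to 1$ to choose $N\geq m$ with
\[
d_{n+1}/d_n<\rho\quad\text{for all }n\geq N.
\]
Put $R:=b\,\max\{d_k:m\leq k\leq N\}$; this is finite and well-defined. Given any $x>R$, since $d_n\to\infty$ the set $\{n\geq N:d_n\geq x/a\}$ is non-empty, so one can take
\[
n^{\ast }:=\min\{n\geq N:d_n\geq x/a\}.
\]
By choice of $R$ one has $d_N<x/a$ (indeed $d_N\leq R/b<x/b<x/a$), so $n^{\ast }\geq N+1$ and $d_{n^{\ast }-1}<x/a$. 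Now the key step: since $n^{\ast }-1\geq N$, the multiplicative gap bound gives $d_{n^{\ast }}<\rho\,d_{n^{\ast }-1}$, so
\[
d_{n^{\ast }-1}>d_{n^{\ast }}/\rho\geq(x/a)/\rho>(x/a)/(b/a)=x/b.
\]
Thus $d_{n^{\ast }-1}\in(x/b,x/a)$, i.e.\ $x\in d_{n^{\ast }-1}(a,b)\subseteq U_m(J)$. Hence $(R,\infty)\subseteq U_m(J)$, as required.

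The additive case is parallel: replace ratios by differences. Set $\ell:=b-a>0$, fix $0<\delta<\ell$, use $d_{n+1}-d_n\to 0$ to pick $N\geq m$ with $|d_{n+1}-d_n|<\delta$ for $n\geq N$, put $R:=b+\max\{d_k:m\leq k\leq N\}$, and for $x>R$ define $n^{\ast }:=\min\{n\geq N:d_n\geq x-a\}$. The same first-crossing argument, with $d_{n^{\ast }-1}>d_{n^{\ast }}-\delta\geq(x-a)-\delta>x-b$, yields $d_{n^{\ast }-1}\in(x-b,x-a)$, hence $x\in d_{n^{\ast }-1}+J\subseteq U_m(J)$.

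The main point of care is that $\{d_n\}$ is not assumed monotone; divergence to $\infty$ ensures only that $d_n$ eventually exceeds any prescribed level, and the admissibility condition $(\ast)$ controls only consecutive gaps. The neat way around this is the \emph{first-crossing} device above: one waits until the sequence first crosses the upper threshold $x/a$ (or $x-a$), whereupon the tight gap bound forces the immediately preceding term into the required window $(x/b,x/a)$ (or $(x-b,x-a)$). This is precisely the mechanism underlying Weissman's Lemma, and appears to be the only genuinely non-routine ingredient.
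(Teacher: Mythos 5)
Your proof is correct, but it takes a genuinely different route from the paper's. The paper argues globally: it fixes the midpoint $x=(a+b)/2$ of $J$ and shows that for large $n$ one has $a<xd_n/d_{n+1}<b$, so $xd_n\in d_{n+1}J$ and consecutive dilates $d_nJ$, $d_{n+1}J$ overlap; the union of a chain of overlapping open intervals with unbounded supremum is then connected and unbounded above, hence contains a half-line. You instead argue pointwise via a first-crossing device: for each large $x$ you locate the first index $n^{\ast}$ at or past which $d_n$ clears the threshold $x/a$, and use the gap bound $d_{n^{\ast}}/d_{n^{\ast}-1}<\rho<b/a$ to force $d_{n^{\ast}-1}$ into the window $(x/b,x/a)$, whence $x\in d_{n^{\ast}-1}J$. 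Both handle the possible non-monotonicity of $\{d_n\}$ correctly (the paper implicitly through connectedness, you explicitly). The paper's version is shorter and more elegant; yours is more constructive, since it produces an explicit threshold $R=R(m,J)$ beyond which the half-line containment holds, which would matter in any quantitative refinement. Your reduction of the weakly Archimedean property to the half-line containment (open $V$ contains an interval $J$, and essentially unbounded $S$ has $S^{>R}$ non-meagre, hence non-empty) is also sound and matches what the paper leaves implicit.
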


\begin{proof}For given $\varepsilon >0$ and all large enough $n$,
$1-\varepsilon <d_{n}/d_{n+1}<1+\varepsilon $. Write $x:=(a+b)/2\in
J$. For
$\varepsilon $ small enough $a<x(1-\varepsilon )<x(1+\varepsilon )<b$, and
then $a<xd_{n}/d_{n+1}<b$, so $xd_{n}\in d_{n+1}J$, and so $d_{n}J$ meets
$d_{n+1}J$. Thus for large enough $n$ consecutive $d_{n}J$ overlap; as
$d_{n}\rightarrow \infty $, their union is thus a half-line.
\end{proof}

\begin{rem}Some such condition as (\ref{*}) is necessary, otherwise the set
$U_{m}(J)$ risks missing an unbounded sequence of open intervals. For an
indirect example, see the remark in \cite{BGT} after Th.\ 1.9.2 and G. E. H.
Reuter's\index{Reuter, G. E. H.} elegant counterexample to a corollary of
Kingman's Theorem, a break-down caused by the absence of our condition. For a
direct example, note that if $d_{n}=r^{n}\log n$ with $r>1$ and $J=(0,1)$,
then $d_{n}+J$ and $d_{n+1}+J$ miss the interval $(1+r^{n}\log n,r^{n+1}\log
(1+n))$ and the omitted intervals have union an unbounded open set; to see
that the omitted intervals are non-degenerate note that their lengths are
unbounded:
\[
r^{n+1}\log (1+n)-r^{n}\log n-1\rightarrow \infty .
\]
\end{rem}

Theorem \ref{t:1} does not extend to the real line under the density topology;
the homeomorphisms\index{homeomorphism|)} $h_{n}(x)=nx$ are no longer weakly Archimed\-ean, as we
demonstrate by an example in Theorem \ref{t:12}. We are thus led to an
alternative approach:

\begin{theorem}\label{t:2}In the multiplicative group of reals
$\mathbb{R}_{+}^{\ast }$ with
the density topology, the family of homeomorphisms $\{h_{q}:q\in
\mathbb{Q}_{+}\}$ defined by $h_{q}(x):=qx$, where $\mathbb{Q}_{+}$ has its
natural order, is weakly Archimedean\index{Archimedes of Syracuse!weakly
Archimedean property|)}. For any density-open set $A$ and any
$j\in \mathbb{Q}_{+}$,
\[
U_{j}(A):=\bigcup_{q\geq j,\,q\in \mathbb{Q}_{+}}qA
\]
contains almost all of an infinite half-line, and so meets every
unbounded density-open set.
\end{theorem}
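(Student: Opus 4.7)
My plan is to reduce the weakly Archimedean property to the quantitative assertion that $U_j(A)$ covers some infinite half-line $[M,\infty)$ modulo a null set, and to establish that by contradiction using Lemma~S (Multiplicative Sierpi\'{n}ski).

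As a preliminary, I would record that each $h_q$ is a $d$-homeomorphism of $\mathbb{R}_+^*$ (multiplication by a positive constant preserves the density topology), so $U_j(A)$ is density-open. Assuming harmlessly that $A\neq\emptyset$, fix any $a\in A$; since every point of a density-open set is a density point of it, $a$ is a density point of $A$.

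For the main step, I would suppose for contradiction that $U_j(A)$ does not contain almost all of any $[M,\infty)$. Then the measurable set $W:=\mathbb{R}_+\setminus U_j(A)$ satisfies $|W\cap(M,\infty)|>0$ for every $M$, so by the Lebesgue Density Theorem I may choose a density point $b$ of $W$ with $b/a>j$. Applying Lemma~S to the measurable sets $A$ and $W$ at their density points $a$ and $b$ produces positive rationals $q_n$ and points $a_n\to a$ through $A$, $b_n\to b$ through $W$, with $b_n=q_na_n$. Hence $q_n\to b/a>j$, so for large $n$ we have $q_n\geq j$, forcing $b_n\in q_nA\subseteq U_j(A)$ and contradicting $b_n\in W$.

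Once $[M,\infty)\setminus U_j(A)$ is null, the weakly Archimedean property is immediate: any measurable set $E$ essentially unbounded in the measure sense has $|E\cap[M,\infty)|>0$, and intersecting with $U_j(A)$ alters this only by a null set, so $E\cap U_j(A)\neq\emptyset$; in particular this covers the case of unbounded density-open sets. The subtlest point I anticipate is the decision to feed $W$ itself into Lemma~S rather than a density-open subset (which Theorem~\ref{t:0} would supply only modulo a null set); this is legitimate because Lemma~S accepts arbitrary measurable sets, and Lebesgue density automatically furnishes a suitable density point $b$ of $W$ arbitrarily far to the right.
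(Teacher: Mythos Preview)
Your argument is correct and follows essentially the same route as the paper: both hinge on applying the Multiplicative Sierpi\'nski Lemma~S to a point $a\in A$ and a suitably distant point $b$ in the complement (resp.\ in an essentially unbounded $B$) to manufacture a rational $q\ge j$ with $qA$ hitting that set. The only cosmetic difference is order---the paper first shows $U_j(A)$ meets every essentially unbounded $B$ (reducing $B$ to density-open via Theorem~\ref{t:0}) and then derives the half-line statement by taking $B$ to be the complement, whereas you prove the half-line statement first and deduce the weakly Archimedean property from it; your direct use of Lemma~S on the measurable complement $W$ legitimately bypasses Theorem~\ref{t:0}.
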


\begin{proof}Let $B$ be Baire and essentially
unbounded in the $d$-topology.  Then $B$ is measurable and essentially
unbounded in the sense of measure. From Theorem \ref{t:0}, we may assume that
$B$ is density-open. Let $A$ be non-empty density-open. Fix $a\in A$ and $j\in
$ $\mathbb{Q}_{+}$. Since $B$ is unbounded, we may choose $b\in B$ such that
$b>ja$. By Theorem S there is a $q\in \mathbb{Q}_{+}$ with $j<q<ba^{-1}$ such
that $qa^{\prime }=b^{\prime }$, with $a^{\prime }\in A$ and $b^{\prime }\in
B$. Thus
\[
U_{j}(A)\cap B\supseteq h_{q}(A)\cap B=qA\cap B\neq \varnothing ,
\]
as required.

If $U_{j}(A)$ fails to contain almost all of any infinite half-line, then
its complement $B:=\mathbb{R}_{+}\backslash U_{j}(A)$ is essentially
unbounded in the sense of measure and so, as above, must meet $U_{j}(A)$, a
contradiction.
\end{proof}

\begin{rems}
\begin{enumerate}[1.]
\item For $A$ the set of irrationals in $(0,1)$ the set
$U_{j}(A)$ is again a set of irrationals which contains almost all, but not
all, of an infinite half-line. Our result is thus best possible.

\item Note that (\ref{*}) is relevant to the distinction between integer and
rational skeletons\index{skeleton!rational skeleton}\index{skeleton!integer skeleton}; see the
prime-divisor\index{prime divisor} example on p.~53
of \cite{BGT}. Theorem \ref{t:2} holds with $\mathbb{Q}_{+}$ replaced by any
countable \textit{dense} subset of $\mathbb{R}_{+}^{\ast }$, although later we
use the fact that $\mathbb{Q}_{+}$ is closed under multiplication. There is an
affinity here with the use of a dense `skeleton set\index{skeleton!set}' in the Heiberg--Seneta\index{Heiberg, C.}\index{Seneta, E.} Theorem,
Th.\ 1.4.3 of \cite{BGT}\index{Goldie, C. M.|)}\index{Teugels, J. L.|)}, and its extension Th.\ 3.2.5 therein.
\end{enumerate}
\end{rems}

Kingman's Theorem below, like KBD, is a generic assertion about embedding
into target sets. We address first the source of this genericity: a property
inheritable by supersets either holds generically or fails outright. This is
now made precise.

\begin{defn}Recall that $X$ denotes $\mathbb{R}_{+}$ with Euclidean
or density topology\index{topology!density topology}. Denote by
$\mathcal{B}a(X)$, or just $\mathcal{B}a$, the Baire sets
of the space $X$, and recall these form a $\sigma $-algebra.
Say that a correspondence $F:\mathcal{B}a\rightarrow \mathcal{B}a$ is
\textit{monotonic} if $F(S)\subseteq F(T)$ for $S\subseteq T$.
\end{defn}

The nub is the following simple result, which we call the Generic Dichotomy
Principle.

\begin{theorem}[\textbf{Generic Dichotomy Principle}]\label{t:3}For $F:\mathcal{B}
a\rightarrow \mathcal{B}a$ monotonic: either
\begin{enumerate}[{\rm (i)}]
\item there is a non-meagre\index{meagre} $S\in \mathcal{B}a$ with $S\cap
F(S)=\varnothing $, or
\item for every non-meagre $T\in \mathcal{B}a$, $T\cap
F(T)$ is quasi all of $T$.
\end{enumerate}

Equivalently: the existence condition that $S\cap F(S)\neq
\varnothing $ should hold for all non-meagre $S\in \mathcal{B}a$
implies the genericity condition that, for each non-meagre $T\in
\mathcal{B}a$, $T\cap F(T)$ is quasi all of $T$.
\end{theorem}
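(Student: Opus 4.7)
The plan is to prove the equivalent contrapositive form: assuming the \emph{existence} condition (that $S\cap F(S)\neq\varnothing$ for every non-meagre Baire $S$), I will show that for every non-meagre $T\in\mathcal{B}a$ the set $T\setminus F(T)$ is meagre, which is precisely the \emph{genericity} conclusion. Equivalently, if genericity fails for some non-meagre $T$, I will exhibit a non-meagre $S\in\mathcal{B}a$ with $S\cap F(S)=\varnothing$, witnessing alternative (i).

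First I would fix a non-meagre $T\in\mathcal{B}a$ for which $T\cap F(T)$ is \emph{not} quasi-all of $T$, i.e.\ the set
\[
S:=T\setminus F(T)
\]
is non-meagre. Note that $S$ lies in $\mathcal{B}a$, since $\mathcal{B}a$ is a $\sigma$-algebra and both $T$ and $F(T)$ are Baire. Thus $S$ is a legitimate candidate for alternative (i).

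The key (and only substantive) step is to exploit monotonicity. Since $S\subseteq T$, monotonicity of $F$ gives $F(S)\subseteq F(T)$. Therefore
\[
S\cap F(S)\;\subseteq\;S\cap F(T)\;=\;(T\setminus F(T))\cap F(T)\;=\;\varnothing ,
\]
so $S$ witnesses (i). This establishes the dichotomy, and simultaneously the equivalent reformulation.

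There is essentially no obstacle here: the argument is a one-line application of monotonicity to the candidate $S:=T\setminus F(T)$. The only mild subtlety is bookkeeping—making sure $S$ is Baire (closure of $\mathcal{B}a$ under set difference) and non-meagre (which is exactly the failure of genericity on $T$)—so that $S$ is an admissible input to $F$ in the first place. Everything else follows from $F(S)\subseteq F(T)$ and the defining property $S\cap F(T)=\varnothing$ of $S$.
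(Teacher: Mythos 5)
Your proof is correct and is essentially the paper's own argument: both proofs identify $S:=T\setminus F(T)$ as the candidate set when genericity fails on $T$, note it is Baire and non-meagre, and apply monotonicity to conclude $S\cap F(S)\subseteq S\cap F(T)=\varnothing$. The only difference is presentational (you argue by direct contraposition, the paper by contradiction), which is immaterial.
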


\begin{proof}Suppose that (i) fails. Then $S\cap F(S)\neq \varnothing $
for every non-meagre $S\in \mathcal{B}a$. We show that (ii) holds.
Suppose otherwise; thus for some $T$ non-meagre in $\mathcal{B}a$, the set
$T\cap F(T)$ is not almost all of $T$. Then the set $U:=T\backslash
F(T)\subseteq T$ is non-meagre (it is in $\mathcal{B}a$ as $T$ and $F(T)$
are) and so
\begin{align*}
\varnothing &\neq U\cap F(U)\qquad (S\cap F(S)\neq \varnothing
    \text{ for every non-meagre }S)\\
  &\subseteq U\cap F(T)\qquad (U\subseteq T\text{ and }F\text{ monotonic}).
\end{align*}
But as $U:=T\backslash F(T)$, $U\cap F(T)=\varnothing $, a contradiction.

The final assertion simply rephrases the dichotomy as an implication.
\end{proof}

The following corollary transfers the onus of verifying the existence
condition of Theorem \ref{t:3} to topological completeness.

\begin{theorem}[\textbf{Generic Completeness Principle}]\label{t:4}For $F:
\mathcal{B}a\rightarrow \mathcal{B}a$ monotonic, if $W\cap
F(W)\neq \varnothing $ for all non-meagre $W\in \mathcal{G}_{\delta }$
then, for each non-meagre $T\in \mathcal{B}a$,
$T\cap F(T)$ is quasi all\index{quasi all} of $T$.

That is, either
\begin{enumerate}[{\rm (i)}]
\item there is a non-meagre $S\in \mathcal{G}_{\delta }$
with $S\cap F(S)=\varnothing $, or
\item for every non-meagre $T\in \mathcal{B}a$, $T\cap
F(T)$ is quasi all of $T$.
\end{enumerate}
\end{theorem}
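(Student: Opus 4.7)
The plan is to derive Theorem \ref{t:4} directly from the Generic Dichotomy Principle (Theorem \ref{t:3}). By the reformulation at the end of Theorem \ref{t:3}, it suffices to verify the existence condition at the level of \emph{all} non-meagre Baire sets: namely, that $S\cap F(S)\neq\varnothing$ for every non-meagre $S\in\mathcal{B}a$. So the task reduces to bridging from the given hypothesis (which only concerns non-meagre $W\in\mathcal{G}_{\delta}$) to this Baire-level existence statement.

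The bridging step, which I would carry out first, exploits the fact that every non-meagre Baire set in $X$ contains a non-meagre $\mathcal{G}_{\delta}$ subset. In the Euclidean case this is Baire's Theorem B verbatim: there is a meagre $M$ such that $W:=S\setminus M\subseteq S$ is completely metrizable, hence a $\mathcal{G}_{\delta}$ in $\mathbb{R}$, and $W$ is non-meagre because $S$ is and $M$ is meagre. In the density-topology case, a non-meagre Baire set is (by property (iii) of the density topology) a measurable set of positive measure, and inner regularity of Lebesgue measure supplies a compact subset $W\subseteq S$ of positive measure; any such $W$ is a Euclidean-$\mathcal{G}_{\delta}$, hence a $d$-$\mathcal{G}_{\delta}$ since the density topology refines the Euclidean one, and is non-$d$-meagre since it is non-null. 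This is precisely the content that the framework of almost completeness built in Section~2 was designed to deliver uniformly in the two settings.

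With $W\subseteq S$ a non-meagre $\mathcal{G}_{\delta}$ in hand, I would invoke the hypothesis to get $W\cap F(W)\neq\varnothing$ and then use monotonicity of $F$, which gives $F(W)\subseteq F(S)$, to conclude
\[
\varnothing\;\neq\;W\cap F(W)\;\subseteq\;W\cap F(S)\;\subseteq\;S\cap F(S).
\]
This verifies the existence condition of Theorem \ref{t:3}, and the dichotomy then forces alternative (ii), which is precisely the genericity conclusion of Theorem \ref{t:4}. The equivalence of the "either/or" formulation with the implication stated first is automatic once Theorem \ref{t:3} is available.

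The main obstacle, such as it is, lies entirely in the bridging step—specifically, in confirming the uniform statement that a non-meagre Baire set contains a non-meagre $\mathcal{G}_{\delta}$ subset in both topologies on $X$. Everything else is a one-line invocation of monotonicity plus Theorem \ref{t:3}.
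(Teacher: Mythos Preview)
Your proposal is correct and follows essentially the same route as the paper: obtain a non-meagre $\mathcal{G}_{\delta}$ subset $W\subseteq S$ (the paper cites Theorem~B for this), apply the hypothesis and monotonicity to get $\varnothing\neq W\cap F(W)\subseteq S\cap F(S)$, then invoke Theorem~\ref{t:3}. Your treatment is slightly more explicit than the paper's in separating the Euclidean and density-topology cases of the bridging step, but the argument is the same.
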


\begin{proof}From Theorem B, for $S$ non-meagre in $\mathcal{B}a$ there
is a non-meagre $W\subseteq S$ with $W\in \mathcal{G}_{\delta }$. So $W\cap
F(W)\neq \varnothing $ and thus $\varnothing \neq W\cap F(W)\subseteq S\cap
F(S)$, by monotonicity. By Theorem \ref{t:3} for every non-meagre $T\in
\mathcal{B} a$, $T\cap F(T)$ is quasi all of $T$.
\end{proof}

\begin{rems}In regard to the role of
$\mathcal{G}_{\delta }$\index{Gdelta@$G_\delta$!set} sets, we note
\textit{Solecki's analytic dichotomy
theorem}\index{Solecki, S.} (reformulating and generalizing a specific
instance discovered by Petruska\index{Petruska, Gy.}, \cite{Pet}) as
follows. For $\mathcal{I}$ a family of closed sets (in any \Index{Polish
space}), let $\mathcal{I}_{\text{ext}}$ denote the sets covered by a countable
union of sets in $\mathcal{I}$. Then, for $A$ an analytic set, either $A\in
\mathcal{I}_{\text{ext}}$, or $A$ contains a $\mathcal{G}_{\delta }$ set not
in $\mathcal{I}_{\text{ext}}$. See \cite{Sol1}, where a number of classical
theorems, asserting that a `large' \Index{analytic set} contains a `large'
compact subset, are deduced, and also \cite{Sol2} for further applications of
dichotomy. A superficially similar, but more distant result, is
\textit{Kuratowski's
Dichotomy}---(\cite{Kur-B}, \cite{Kur-1}\index{Kuratowski, K.},
\cite[Cor.\ 1]{McSh}): suppose a set $H$ of autohomeomorphisms\index{homeomorphism} acts transitively
on a space $X$, and $Z\subseteq X$ is Baire and has the property that for
each $h\in H$
\[
Z=h(Z)\text{ or }Z\cap h(Z)=\varnothing ,
\]
i.e.\ under each $h\in H$, either $Z$ is invariant or $Z$ and its image are
disjoint. Then, either $H$ is meagre or it is \Index{clopen}.
\end{rems}

\begin{exas}Here are four examples of monotonic correspondences. The
first two relate to standard results. The following two are canonical for
the current paper as they relate to KBD and to Kingman's Theorem in its
original form. Each correspondence $F$ below gives rise to a correspondence
$\Phi (A):=F(A)\cap A$ which is a lower or upper density and arises in the
theory of \textit{lifting}\index{lifting} \cite{IT1,IT2} and category
measures\index{category measure}
\cite[Th.\ 22.4]{Oxt}, and so gives rise to a fine
topology\index{topology!fine topology} on the real line.
See also \cite[Section 6F]{LMZ} on lifting topologies.

\begin{enumerate}[1.]
\item Here we apply Theorem \ref{t:3} to the real line with the density
  topology, in which the meagre sets are the null sets. Let $\mathcal{B}$
  denote a countable basis of Euclidean open neighbourhoods. For any set $T$
  and $0<\alpha <1$ put
\[
\mathcal{B}_{\alpha }(T):=\{I\in \mathcal{B}:|I\cap T|>\alpha |I|\},
\]
which is countable, and
\[
F(T):=\bigcap_{\alpha \in \mathbb{Q}\cap (0,1)}\bigcup \{I:I\in
\mathcal{B}_{\alpha }(T)\}.
\]
Thus $F$ is increasing in $T$, $F(T)$ is measurable (even if $T$ is not) and
$x\in F(T)$ iff $x$ is a density point of $T$. If $T$ is measurable, the set
of points $x$ in $T$ for which $x\in I\in \mathcal{B}$ implies that $|I\cap
T|<\alpha |I|$ is null (see \cite[Th.\ 3.20]{Oxt}). Hence any non-null
measurable set contains a density point. It follows that almost all points of
a measurable set $T\ $are density points. This is the Lebesgue Density
Theorem\index{Lebesgue, H. L.!Lebesgue density theorem} (\cite[Th.\ 3.20]{Oxt},
or \cite[Section 3.5]{Kucz}).

\item In \cite[Th.\ 2]{PWW} a category analogue of the Lebesgue Density
Theorem is established. This follows more simply from our Theorem \ref{t:3}.

\item For KBD, let $z_{n}\rightarrow 0$ and put $F(T):=\bigcap_{n\in
\omega }\bigcup_{m>n}(T-z_{m})$. Thus $F(T)\in \mathcal{B}a$ for $T\in
\mathcal{B}a$ and $F$ is monotonic. Here $t\in F(T)$ iff there is an
infinite $\mathbb{M}_{t}$ such that $\{t+z_{m}:m\in
\mathbb{M}_{t}\}\subseteq T$. The Generic Dichotomy Principle asserts that
once we have proved (for which see Theorem \ref{t:15} below) that an arbitrary
non-meagre set $T$ contains a `translator', i.e.\ an element $t$ which
shift-embeds a subsequence $z_{m}$ into $T$, then quasi all elements of $T$
are translators.

\item For $z_{n}=n$ and $\{S_{k}\}$ a family of unbounded open sets (in the
  Euclidean sense), put $F(T):=T\cap \bigcap_{k\in \omega }\bigcap_{n\in
    \omega }\bigcup_{m>n}(S_{k}-z_{m})$. Thus $F(T)\in \mathcal{B}a$ for $T\in
  \mathcal{B}a$ and $F$ is monotonic. Here $t\in F(T)$ iff $t\in T$ and for
  each $k\in \omega $, there is an infinite $\mathbb{M}_{t}^{k}$ such that
  $\{t+z_{m}:m\in \mathbb{M}_{t}^{k}\}\subseteq S_{k}$. In Kingman's version
  of his theorem, as stated, we know only that $F(V)$ is non-empty for any
  non-empty open set $V$; but in Theorem \ref{t:5} below we adjust his
  argument to show that $F(T)$ is non-empty for arbitrary non-meagre sets
  $T\in \mathcal{B}a$, hence that quasi all members of $T$ are in $F(T)$, and
  in particular that this is so for $T=\mathbb{R}_{+}$.
\end{enumerate}
\end{exas}

\begin{theorem}[\textbf{Bitopological Kingman
Theorem}---{\cite[Th.\ 1]{King1}}, \cite{King2}, where
$\mathbb{I=N}$]\label{t:5}If $X$ is a Baire space,
\begin{enumerate}[{\rm (i)}]
\item $\{h_{i}:i\in \mathbb{I}\}$ is a countable, linearly
ordered, weakly Archimedean family of self-homeomorphisms\index{homeomorphism} of $X$, and
\item $\{S_{k}:k=1,2,\dots\}$ are essentially unbounded Baire sets,
\end{enumerate}
then for quasi all $\eta \in X$ and all $k\in
\mathbb{N}$ there exists an unbounded subset $\mathbb{J}_{\eta}^{k}$
of $\mathbb{I}$ with
\[
\{h_{j}(\eta ):j\in \mathbb{J}_{\eta }^{k}\}\subset S_{k}.
\]
Equivalently, if\/ {\rm (i)} and
\begin{enumerate}[${\rm (i)}'$]
\addtocounter{enumi}{1}
\item $\{A_{k}:k=1,2,\dots\}$ are Baire and all
accumulate essentially at $0$,
\end{enumerate}
then for quasi all $\eta $
and every $k=1$, 2, \dots\ there exists $\mathbb{J}_{\eta }^{k}$
unbounded with
\[
\{h_{j}(\eta )^{-1}:j\in \mathbb{J}_{\eta }^{k}\}\subset A_{k}.
\]
\end{theorem}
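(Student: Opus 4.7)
The plan is to deduce the theorem from the Generic Completeness Principle (Theorem \ref{t:4}) applied to the correspondence
\[
F(T) := T \cap \bigcap_{k}\bigcap_{n\in\omega}\bigcup_{j\geq n,\,j\in\mathbb{I}} h_{j}^{-1}(S_{k}).
\]
A point $t\in F(T)$ iff $t\in T$ and, for every $k$, the index set $\mathbb{J}_{t}^{k}:=\{j\in\mathbb{I}:h_{j}(t)\in S_{k}\}$ is unbounded in $\mathbb{I}$. Since each $h_{j}$ is a homeomorphism and each $S_{k}$ is Baire, $F(T)\in\mathcal{B}a$ for $T\in\mathcal{B}a$, and $F$ is plainly monotonic. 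By Theorem \ref{t:4} it then suffices to verify the existence condition: $W\cap F(W)\neq\varnothing$ for every non-meagre $\mathcal{G}_{\delta}$ set $W$; in fact the route I take will show outright that $F(X)$ is comeagre, which via Theorem \ref{t:3} yields ``$T\cap F(T)$ is quasi all of $T$'' for every non-meagre Baire $T$, and in particular for $T=X$.

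The Baire-versus-open mismatch is resolved by Theorem \ref{t:0}: for each $k$ choose an open (density-open in the measure case) unbounded $G_{k}$ and a meagre (null) $M_{k}$ with $G_{k}\setminus M_{k}\subseteq S_{k}$. Set
\[
H_{k}^{n} := \bigcup_{j\geq n,\,j\in\mathbb{I}} h_{j}^{-1}(G_{k}),\qquad N := \bigcup_{k\in\omega,\,j\in\mathbb{I}} h_{j}^{-1}(M_{k}).
\]
Then $H_{k}^{n}$ is open as a union of open sets, and $N$ is meagre as a countable union of homeomorphic images of meagre sets. The key point is that each $H_{k}^{n}$ is dense in $X$: given any non-empty open $V\subseteq X$, the weakly Archimedean hypothesis applied to the essentially unbounded set $G_{k}$ forces $U_{n}(V)=\bigcup_{i\geq n}h_{i}(V)$ to meet $G_{k}$, so $h_{i}(V)\cap G_{k}\neq\varnothing$ for some $i\geq n$, which on applying $h_{i}^{-1}$ reads $V\cap h_{i}^{-1}(G_{k})\neq\varnothing$, whence $V\cap H_{k}^{n}\neq\varnothing$.

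Since $X$ is a Baire space, the countable intersection $\bigcap_{k,n}H_{k}^{n}$ is comeagre; removing the meagre set $N$ leaves a comeagre $E\subseteq X$ with the property that for every $t\in E$ and every $k,n$ there is $j\geq n$ with $t\in h_{j}^{-1}(G_{k})\setminus h_{j}^{-1}(M_{k})\subseteq h_{j}^{-1}(S_{k})$, i.e.\ $E\subseteq F(X)$. The equivalent formulation in terms of $A_{k}$ accumulating essentially at $0$ follows at once by applying the already-proved statement to $S_{k}:=A_{k}^{-1}$, since on $\mathbb{R}_{+}^{\ast}$ this is precisely the definition of accumulating essentially at $0$, and $h_{j}(\eta)\in A_{k}^{-1}$ iff $h_{j}(\eta)^{-1}\in A_{k}$. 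The main obstacle is exactly the density-of-$H_{k}^{n}$ step: the weakly Archimedean property is designed to handle \emph{open} unbounded targets, while the hypotheses give only Baire essentially unbounded sets $S_{k}$, so Theorem \ref{t:0} (itself a small but crucial ``openness reduction'') is what bridges the gap and lets the bare Baire-category argument go through simultaneously in the metric and density topologies.
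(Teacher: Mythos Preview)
Your proof is correct and follows essentially the same route as the paper's: reduce each $S_k$ to an open (essentially) unbounded $G_k$ modulo a meagre set, use the weakly Archimedean property to show the sets $\bigcup_{j\ge n}h_j^{-1}(G_k)$ are dense open, intersect and strip the meagre pullbacks, then invoke Generic Dichotomy. The only cosmetic differences are that you cite Theorem~\ref{t:0} for the open reduction where the paper writes out the Baire decomposition $S_k\cup N_k'=G_k\cup N_k$ directly, and you show $F(X)$ comeagre outright (which already suffices, since here $F(T)=T\cap F(X)$) whereas the paper verifies $T\cap F(T)\neq\varnothing$ for arbitrary non-meagre $T$ before appealing to Theorem~\ref{t:3}.
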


\begin{proof}We will apply Theorem \ref{t:3} (Generic Dichotomy), so consider an
arbitrary non-meagre Baire set $T$. We may assume without loss of generality
that $T=V\backslash M$ with $V$ non-empty open and $M$ meagre. For each $k=1$,
2, \dots\ choose $G_{k}$ open and $N_{k}$ and $N_{k}^{\prime }$ meagre such
that $S_{k}\cup N_{k}^{\prime }=G_{k}\cup N_{k}$. Put $N:=M\cup
\bigcup_{n,k}h_{n}^{-1}(N_{k}^{\prime })$; then $N$ is meagre (as $h_{n}$, and
so $h_{n}^{-1}$, is a homeomorphism\index{homeomorphism}).

As $S_{k}$ is essentially unbounded, $G_{k}$ is unbounded (otherwise, for
some $m$, $G_{k}\subset (-m,m)$, and so $S_{k}\cap (m,\infty )\subset N_{k}$
is meagre). Define the open sets $G_{jk}:=\bigcup_{i\geq
j}h_{i}^{-1}(G_{k})$. We first show that each $G_{jk}$ is dense. Suppose,
for some $j$, $k$, there is a non-empty open set $V$ such that $V\cap
G_{jk}=\varnothing $. Then for all $i\geq j$,
\[
V\cap h_{i}^{-1}(G_{k})=\varnothing ;\qquad G_{k}\cap h_{i}(V)=\varnothing .
\]
So $G_{k}\cap \bigcup_{i\geq j}h_{i}(V)=\varnothing $, i.e., for $U^{j}$
the open set $U^{j}:=\bigcup_{i\geq j}h_{i}(V)$ we have $G_{k}\cap
U^{j}=\varnothing $. But as $G_{k}$ is unbounded, this contradicts
$\{h_{i}\} $ being a weakly Archimedean family.

Thus the open set $G_{jk}$ is dense (meets every non-empty open set); so, as
$\mathbb{I}$ is countable, the $\mathcal{G}_{\delta }$ set
\[
H:=\bigcap_{k=1}^{\infty }\bigcap_{j\in \mathbb{I}}G_{jk}
\]
is dense (as $X$ is a Baire space). So as $V$ is a non-empty open subset we
may choose $\eta \in (H\cap V)\backslash N$. (Otherwise $N\cup (X\backslash
H)$ and hence $V$ is of \Index{first category}.) Thus $\eta \in T$ and for all
$k=1$, 2, \dots
\begin{equation}
\eta \in V\cap \bigcap_{j\in \mathbb{I}}\bigcup_{i\geq
j}h_{i}^{-1}(G_{k})\text{ and }\eta \notin N.  \tag{eta}  \label{eta}
\end{equation}
For all $m$, as $h_{m}(\eta )\notin h_{m}(N)$ we have for all $m$, $k$ that
$h_{m}(\eta )\notin N_{k}^{\prime }$. Using (\ref{eta}), for each $k$ select an
unbounded $\mathbb{J}_{\eta }^{k}$ such that for $j\in \mathbb{J}_{\eta
}^{k}$, $\eta \in h_{j}^{-1}(G_{k})$; for such $j$ we have $\eta \in
h_{j}^{-1}(S_{k})$. That is, for some $\eta \in T$ we have
\[
\{h_{j}(\eta ):j\in \mathbb{J}_{\eta }^{k}\}\subset S_{k}.
\]
Now
\[
F(T):=T\cap \bigcap_{k=1}^{\infty }\bigcap_{j\in \mathbb{I}}
\bigcup_{i\geq j}h_{i}^{-1}(G_{k})
\]
takes Baire sets to Baire sets and is monotonic. Moreover, $\eta \in F(T)$
iff $\eta \in T$ and for each $k$ there is an unbounded $\mathbb{J}_{\eta
}^{k}$ with $\{h_{j}(\eta ):j\in \mathbb{J}_{\eta }^{k}\}\subset S_{k}$. We
have just shown that $T\cap F(T)\neq \varnothing $ for $T$ arbitrary
non-meagre, so the Generic Dichotomy Principle implies that $X\cap F(X)$ is
quasi all of $X$, i.e.\ for quasi all $\eta $ in $X$ and each $k$
there is an unbounded $\mathbb{J}_{\eta }^{k}$ with $\{h_{j}(\eta ):j\in
\mathbb{J}_{\eta }^{k}\}\subset S_{k}$.
\end{proof}

Working in either the density or the Euclidean topology, we obtain
the following conclusions.

\refstepcounter{theorem}\label{t:6}
\begin{KCat}\index{Kingman, J. F. C.}If
$\{S_{k}:k=1,2,\dots\}$ are Baire and essentially unbounded in the
category sense, then for quasi all $\eta $ and each $k\in
\mathbb{N}$ there exists an unbounded subset $\mathbb{J}_{\eta }^{k}$
of $\mathbb{N}$ with
\[
\{n\eta :n\in \mathbb{J}_{\eta }^{k}\}\subset S_{k}.
\]
In particular this is so if the sets $S_{k}$ are open.
\end{KCat}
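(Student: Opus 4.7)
The plan is to specialize the Bitopological Kingman Theorem (Theorem~\ref{t:5}) to $X=\mathbb{R}_{+}$ endowed with the Euclidean topology, the index set $\mathbb{I}=\mathbb{N}$, and the family $h_{n}(x):=nx$. Under this choice the conclusion of Theorem~\ref{t:5} reads exactly as the statement of Theorem~KCat, so the task reduces to checking that hypotheses (i) and (ii) of Theorem~\ref{t:5} hold.

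First, $X=\mathbb{R}_{+}$ with the Euclidean topology is a Baire space by the classical Baire theorem. The maps $h_{n}(x)=nx$ are self-homeomorphisms of $\mathbb{R}_{+}^{\ast}$ viewed multiplicatively (and equally of $\mathbb{R}_{+}$ in its Euclidean guise), and $\mathbb{N}$ is countable and linearly ordered, so the only non-trivial point is the weakly Archimedean property. For this, given any non-empty Euclidean open set $V\subseteq\mathbb{R}_{+}$, pick an interval $J=(a,b)\subseteq V$ with $0<a<b$. Taking $d_{n}:=n$ we have $d_{n+1}/d_{n}=(n+1)/n\to1$, so the multiplicative form of $(*)$ holds; Theorem~\ref{t:1} then implies that $\bigcup_{n\geq j}nJ$ contains an infinite half-line. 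Consequently $U_{j}(V)=\bigcup_{n\geq j}nV\supseteq\bigcup_{n\geq j}nJ$ meets every essentially unbounded set in $\mathbb{R}_{+}$ (such a set has points beyond every bound, hence intersects any half-line $(M,\infty)$). Thus $\{h_{n}:n\in\mathbb{N}\}$ is weakly Archimedean.

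With hypotheses (i) and (ii) of Theorem~\ref{t:5} in force for the given Baire, essentially unbounded sets $S_{k}$, the conclusion yields that for quasi all $\eta\in\mathbb{R}_{+}$ and each $k$ there is an unbounded $\mathbb{J}_{\eta}^{k}\subseteq\mathbb{N}$ with $\{h_{n}(\eta):n\in\mathbb{J}_{\eta}^{k}\}=\{n\eta:n\in\mathbb{J}_{\eta}^{k}\}\subseteq S_{k}$, as required. For the final ``in particular'' clause, note that if $S_{k}$ is open and unbounded then it is automatically Baire, and for every $m\in\mathbb{N}$ the set $S_{k}^{>m}=S_{k}\setminus B_{m}(0)$ is non-empty and open, hence non-meagre; so each such $S_{k}$ is essentially unbounded in the category sense and the hypothesis applies.

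The proof is therefore almost entirely an unpacking of definitions and a single citation of Theorem~\ref{t:5}; the only even mildly substantive step is the verification of the weakly Archimedean property, and that is immediate from Theorem~\ref{t:1}. The main ``obstacle'' has already been absorbed into the earlier machinery---namely, the Generic Dichotomy argument inside Theorem~\ref{t:5} together with the half-line covering furnished by condition~$(*)$---so no new technical difficulty arises here.
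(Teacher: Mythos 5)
Your proof is correct and matches the paper's own (implicit) derivation exactly: the paper states Theorem \ref{t:6}C as an immediate corollary of the Bitopological Kingman Theorem (Theorem \ref{t:5}) applied in the Euclidean topology, with the weakly Archimedean hypothesis for $h_n(x)=nx$ supplied by Theorem \ref{t:1} (taking $d_n=n$, for which $(n+1)/n\to1$). Your verification of each hypothesis—Baire space, countable linearly ordered family of self-homeomorphisms, weak Archimedeanness via the half-line covering, and the ``in particular'' clause for open unbounded $S_k$—is precisely the unpacking the paper leaves to the reader.
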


\begin{KMeas}\index{Kingman, J. F. C.}If
$\{S_{k}:k=1,2,\dots\}$ are measurable and essentially unbounded in
the measure sense, then for almost all $\eta $ and each $k\in
\mathbb{N}$ there exists an unbounded subset $\mathbb{J}_{\eta
}^{k}$ of $\mathbb{Q}_{+}$ with
\[
\{q\eta :q\in \mathbb{J}_{\eta }^{k}\}\subset S_{k}.
\]
\end{KMeas}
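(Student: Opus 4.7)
The plan is to derive this as a direct specialization of Theorem \ref{t:5} (the Bitopological Kingman Theorem), taking $X = \mathbb{R}_{+}^{\ast}$ equipped with the density topology $d$, the index set $\mathbb{I} = \mathbb{Q}_{+}$ in its natural order, and the homeomorphism family $\{h_{q}:q\in\mathbb{Q}_{+}\}$ defined by $h_{q}(x):=qx$.

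First I would verify each hypothesis of Theorem \ref{t:5} in this setting. The space $(\mathbb{R}_{+}^{\ast},d)$ is a Baire space by property (iv) of the density topology listed in Section~2, and each dilation $h_{q}$ is a $d$-homeomorphism because multiplication by a positive scalar preserves density points, hence preserves $d$-open sets. The family is countable, linearly ordered, and weakly Archimedean in $(\mathbb{R}_{+}^{\ast},d)$---the last property being exactly the content of Theorem \ref{t:2}. Next I translate the measure-theoretic hypothesis on the $S_{k}$: by property (ii) a set is Lebesgue measurable iff it is $d$-Baire, and by property (iii) a Baire set is $d$-meagre iff it is null, so $S_{k}$ being measurable and essentially unbounded \emph{in the measure sense} coincides with $S_{k}$ being Baire and essentially unbounded in the Baire space $(\mathbb{R}_{+}^{\ast},d)$.

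Theorem \ref{t:5} then delivers a $d$-meagre set $E\subset\mathbb{R}_{+}^{\ast}$ such that for every $\eta\notin E$ and each $k$ some unbounded $\mathbb{J}_{\eta}^{k}\subset\mathbb{Q}_{+}$ satisfies
\[
\{h_{q}(\eta):q\in\mathbb{J}_{\eta}^{k}\}=\{q\eta:q\in\mathbb{J}_{\eta}^{k}\}\subset S_{k}.
\]
Applying property (iii) once more, $d$-meagreness of $E$ means $E$ is null, so ``quasi all'' $\eta$ in the $d$-topology translates to ``almost all'' $\eta$ in the Lebesgue sense, which is exactly the conclusion asserted. There is no genuine obstacle: the substantive content---weak Archimedeanness obtained via the multiplicative Sierpi\'nski lemma (Theorem \ref{t:2}), and the generic conclusion extracted from the Generic Dichotomy Principle (via Theorem \ref{t:5})---has already been established, so this specialization reduces to a dictionary translating density-topology vocabulary into measure-theoretic vocabulary.
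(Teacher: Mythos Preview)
Your proposal is correct and matches the paper's approach exactly: the paper presents Theorem~\ref{t:6}M (together with Theorem~\ref{t:6}C) as an immediate specialization of the Bitopological Kingman Theorem (Theorem~\ref{t:5}), obtained by working in the density topology and invoking Theorem~\ref{t:2} for the weakly Archimedean property, with the measure--category dictionary supplied by properties (ii)--(iv) of the density topology. You have simply spelled out in detail what the paper compresses into the single sentence ``Working in either the density or the Euclidean topology, we obtain the following conclusions.''
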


In the corollary below $\mathbb{J}_{t}^{k}$ refers to unbounded subsets of
$\mathbb{N}$ or $\mathbb{Q}_{+}$ according to the category/measure context.
It specializes down to a KBD result for a single set $T$ when $T_{k}\equiv T$,
but it falls short of KBD in view of the extra admissibility assumption and
the factor $\sigma $ (the latter an artefact of the multiplicative setting).

\begin{Cor}For $\{T_{k}:k\in \omega \}$
Baire/measurable and $z_{n}\rightarrow 0$ admissible, for
generically all $t\in \mathbb{R}$ there exist $\sigma _{t}$
and unbounded $\mathbb{J}_{t}^{k}$ such that for $k=1$, $2$, \dots
\[
t\in T_{k}\Longrightarrow \{t+\sigma _{t}z_{m}:m\in
\mathbb{J}_{t}^{k}\}\subset T_{k}.
\]
\end{Cor}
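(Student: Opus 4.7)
The plan is to convert the additive embedding $t + \sigma_t z_m \in T_k$ into a multiplicative one accessible to Theorem~\ref{t:5}. Assume $z_n > 0$ (otherwise partition indices by sign), set $d_n := 1/z_n$, and note that admissibility of $\{z_n\}$ at $0$ gives $d_n \to \infty$ with $d_{n+1}/d_n \to 1$ (the multiplicative form of~(\ref{*})). For each $k$ and $t \in \mathbb{R}$, put
\[
V_k(t) := ((T_k - t) \cap \mathbb{R}_+)^{-1},
\]
a Baire/measurable subset of $\mathbb{R}_+^\ast$. The identity
\[
t + \sigma z_m \in T_k \iff d_m/\sigma \in V_k(t) \qquad (\sigma > 0)
\]
recasts the goal as: for generic $t$, find $\eta_t := 1/\sigma_t$ and unbounded $\mathbb{J}_t^k$ such that $d_m \eta_t \in V_k(t)$ for every $k$ with $t \in T_k$.

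Two ingredients are needed. First, a weakly Archimedean family on $\mathbb{R}_+^\ast$: in the Baire/Euclidean setting, Theorem~\ref{t:1} supplies $\{h_n(x) = d_n x : n \in \mathbb{N}\}$; in the measure/density setting Theorem~\ref{t:1} fails (cf.\ the remark after it), so one substitutes the rational-multiplier family $\{h_q(x) = qx : q \in \mathbb{Q}_+\}$ of Theorem~\ref{t:2}, which also explains why $\mathbb{J}_t^k \subset \mathbb{Q}_+$ in the measure case. Second, essential unboundedness of $V_k(t)$ for generic $t \in T_k$. In the Baire case, write $T_k = (U_k \setminus M_k) \cup N_k$ with $U_k$ open and $M_k, N_k$ meagre; for $t \in U_k \setminus (M_k \cup N_k)$---quasi all of $T_k$---some $(t, t + \delta) \subset U_k$, so $(T_k - t) \cap (0, \delta^{\prime})$ is non-meagre in every sub-interval for each $\delta^{\prime} \leq \delta$, forcing essential unboundedness of $V_k(t)$ under inversion. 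The measure case runs identically using the Lebesgue Density Theorem. Taking countable unions over $k$, the set of $t$ for which $V_k(t)$ is essentially unbounded whenever $t \in T_k$ is quasi all of $\mathbb{R}$.

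For such a generic $t$, apply Theorem~\ref{t:5} with $X = \mathbb{R}_+^\ast$ (in the appropriate topology), the weakly Archimedean family just described, and the countable family of essentially unbounded $V_k(t)$: this yields quasi all $\eta \in X$---in particular some $\eta_t$---and, for each relevant $k$, an unbounded $\mathbb{J}_t^k$ with $\{d_j \eta_t : j \in \mathbb{J}_t^k\} \subset V_k(t)$. Setting $\sigma_t := 1/\eta_t$ and invoking the reformulation delivers $\{t + \sigma_t z_j : j \in \mathbb{J}_t^k\} \subset T_k$, as required. The chief obstacle is the essential-unboundedness step: showing that generic $t \in T_k$ makes $V_k(t)$ essentially unbounded is a generic density-point statement---elementary in the Baire setting but requiring the Lebesgue Density Theorem in the measure setting.
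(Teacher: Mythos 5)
Your proposal is correct and follows essentially the same strategy as the paper's proof: show that for all $k$, the set of $t \in T_k$ at which $T_k - t$ fails to accumulate essentially at $0$ is meagre/null; take the countable union over $k$; and for each remaining $t$ apply the bitopological Kingman theorem (Theorem~\ref{t:5}) to the translated family, with the weakly Archimedean family $\{h_n(x)=d_nx\}$ ($d_n=1/z_n$) from Theorem~\ref{t:1} in the category case and the rational family $\{h_q(x)=qx\}$ from Theorem~\ref{t:2} in the measure case, recovering $\sigma_t = 1/\eta_t$ and the embedded sequence via inversion.

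The one place you genuinely diverge from the paper is in establishing that the bad set is small. The paper works with an arbitrary Baire/measurable $T$, defines $N(T)$ to be the points of $T$ that are not points of essential accumulation, and shows $N(T)$ is meagre/null by a hereditarily Lindel\"of covering argument: each $t\in N(T)$ has a ball whose intersection with $T$ is meagre/null, and a countable subcover exhibits $N(T)$ as a countable union of meagre/null pieces. You instead invoke the structure theorem $T_k=(U_k\setminus M_k)\cup N_k$ (resp.\ the Lebesgue Density Theorem) to identify the quasi-all/almost-all points of $T_k$ at which accumulation holds. Both arguments work; the Lindel\"of route is slightly more uniform across the two cases and avoids unpacking the structure of $T_k$, whereas your route is arguably more concrete and makes explicit where density points enter. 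Your explicit passage to $V_k(t)=((T_k-t)\cap\mathbb{R}_+)^{-1}$ and essential unboundedness is the same inversion the paper performs implicitly when it invokes the second (accumulation-at-$0$) form of Theorem~\ref{t:5}. Your parenthetical observation that the $\mathbb{Q}_+$-indexing in the measure case is forced by the failure of Theorem~\ref{t:1} under the density topology correctly identifies the same artefact the paper acknowledges in its preamble to the corollary.
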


\begin{proof}For $T$ Baire/measurable, let $N=N(T)$ be the set of points
$t\in T$ that are not points of essential accumulation of $T$; then $t\in N$
if for some $n=n(t)$ the set $T\cap B_{1/n}(t)$ is meagre/null. As $\mathbb{R}$
with the Euclidean topology is (hereditarily) \index{second countable}second-countable, it is
hereditarily Lindel\"of\index{Lindelof, E. L.@Lindel\"of, E. L.} (see \cite[Th.\ 3.8.1]{Eng} or \cite[Th.\ 8.6.3]{Dug}),
so for some countable $S\subset N$
\[
N\subset \bigcup_{t\in S}T\cap B_{1/n(t)}(t),
\]
and so $N$ is meagre/null. Thus the set $N_{k}$ of points $t\in T_{k}$ such
that $T_{k}-t$ does not accumulate essentially at $0$ is meagre/null, as is
$N=\bigcup_{k}N_{k}$. For $t\notin N$, put $\Omega _{t}:=\{k\in \omega
:T_{k}-t\text{ accumulates essentially at }0\}$. Applying Kingman's Theorem to
the sets $\{T_{k}-t:k\in \Omega _{t}\}$ and the sequence $z_{n}\rightarrow
0$, there exist $\sigma_{t}$ and unbounded $\mathbb{J}_{t}^{k}$ such that
for $k\in \Omega_{t}$
\[
\{\sigma _{t}z_{m}:m\in \mathbb{J}_{t}^{k}\}\subset T_{k}-t,\text{ i.e.\ }
\{t+\sigma _{t}z_{m}:m\in \mathbb{J}_{t}^{k}\}\subset T_{k}.
\]
Thus for $t\notin N$, so for generically all $t$, there exist $\sigma _{t}$
and unbounded $\mathbb{J}_{t}^{k}$ such that for $k=1$, 2, \dots
\[
t\in T_{k}\Longrightarrow \{t+\sigma _{t}z_{m}:m\in
\mathbb{J}_{t}^{k}\}\subset T_{k}.\qedhere
\]
\end{proof}

\section{Applications---rational skeletons}\index{skeleton!rational skeleton|(}
In \cite{King1} Kingman's applications were concerned mostly with limiting
behaviour of continuous functions, studied by means of $h$-skeletons defined
by
\[
L_{\mathbb{N}}(h):=\lim_{n\rightarrow \infty }f(nh),
\]
assumed to exist for all $h$ in some interval $I$. This works for Baire
functions; but in our further generalization to measurable functions $f:
\mathbb{R}_{+}\mathbb{\rightarrow }\mathbb{R}$, we are led to study limits
$L_{\mathbb{Q}}(h):=\lim_{q\rightarrow \infty }f(qh)$, taken through the
rationals. Using the decomposition
\[
q:=n(q)+r(q),\qquad n(q)\in \omega ,\ r(q)\in \lbrack 0,1),
\]
the limit $L_{\mathbb{Q}}(h)$ may be reduced to, and so also computed as,
$L_{\mathbb{N}}(h)$ (provided we admit perturbations on $h$, making the
assumption of convergence here more demanding)---see Theorem \ref{t:11} below.

\begin{theorem}[\textbf{Conversion of sequential to continuous limits at
infinity}---cf.\ {\cite[Cor.\ 2 to Th.\ 1]{King1}}]\label{t:7}For
$f:\mathbb{R}_{+}\rightarrow \mathbb{R}$ measurable and $V$ a
non-empty, density-open set (in particular, an open interval), if
\[
\lim_{q\rightarrow \infty }f(qx)=0,\text{ for each }x\in V,
\]
then
\[
\lim_{t\rightarrow \infty }f(t)=0.
\]
The category version holds, mutatis mutandis, with $f$
Baire and $V$ open.
\end{theorem}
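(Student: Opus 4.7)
The plan is to argue by contraposition, using the already-established specializations Theorem~\ref{t:6}M and Theorem~\ref{t:6}C of the Bitopological Kingman Theorem. The key observation is that since $f$ is only measurable (resp.\ Baire), the conclusion $\lim_{t\to\infty}f(t)=0$ can only be read in the essential sense appropriate to its context---i.e.\ as a density-topology limit at infinity (measure case) or its Baire analogue (category case): for every $\varepsilon>0$, the superlevel set $S_{\varepsilon}:=\{t\in\mathbb{R}_{+}:|f(t)|\geq\varepsilon\}$ is essentially bounded. Otherwise, modifying $f$ on a null/meagre set would give a counterexample.

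Assuming the conclusion fails in this essential sense, fix $\varepsilon>0$ for which $S_{\varepsilon}$ is essentially unbounded (in measure or in category, according to the case). Since $f$ is measurable (resp.\ Baire), $S_{\varepsilon}$ is measurable (resp.\ Baire). Apply Theorem~\ref{t:6}M (resp.\ \ref{t:6}C) to the constant family $\{S_{k}\}_{k\geq 1}$ with $S_{k}\equiv S_{\varepsilon}$. This gives a subset of $\mathbb{R}_{+}$ of full measure (resp.\ comeagre) such that for each $\eta$ in it there is an unbounded $\mathbb{J}_{\eta}\subset\mathbb{Q}_{+}$ (resp.\ $\mathbb{J}_{\eta}\subset\mathbb{N}$) with
\[
\{q\eta:q\in\mathbb{J}_{\eta}\}\subset S_{\varepsilon}.
\]

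Because $V$ is a non-empty density-open set (resp.\ open set), it is non-null (resp.\ non-meagre); hence the full-measure (resp.\ comeagre) set above meets $V$. Pick any such $\eta\in V$. Then $|f(q\eta)|\geq\varepsilon$ along the unbounded sequence $q\in\mathbb{J}_{\eta}$, directly contradicting the hypothesis $\lim_{q\to\infty}f(q\eta)=0$ (resp.\ $\lim_{n\to\infty}f(n\eta)=0$). This completes the contradiction in both cases simultaneously, the bitopological framework letting a single argument cover the measure and category statements.

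The main obstacle is not technical but interpretive: settling on the essential reading of the conclusion. Once adopted, the proof is a one-line application of Kingman's theorem to a superlevel set of $|f|$, combined with the fact that the density-open/open hypothesis on $V$ provides the required non-negligibility needed to intersect the full-measure/comeagre set of ``good'' $\eta$ produced by Kingman.
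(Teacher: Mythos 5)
Your proof is correct, and it takes a genuinely different and arguably cleaner route than the paper's. The paper argues by Lusin's Theorem (Theorem~L): it first produces a density-open co-null set $S = \bigcup_m S_m$ on which $f$ is relatively continuous, then uses that continuity to show the superlevel set $B = \{s\in S : |f(s)|>c\}$ is essentially unbounded, and only then invokes Theorem~\ref{t:6}M. You bypass Lusin entirely: since $f$ is measurable (resp.\ Baire), the superlevel set $S_\varepsilon = \{t : |f(t)|\geq\varepsilon\}$ is already measurable (resp.\ Baire), so essential unboundedness of some $S_\varepsilon$ is the \emph{immediate} content of ``the essential conclusion fails'', and Theorem~\ref{t:6}M/\ref{t:6}C applies directly. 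Your interpretive observation is moreover well-taken and not merely cosmetic: read literally, the pointwise conclusion $\lim_{t\to\infty}f(t)=0$ is simply false (take $f=1_{\mathbb{Q}}$ and $V$ the irrationals in $(1,2)$: then $qx$ is irrational for $q\in\mathbb{Q}_+$, $x\in V$, so the hypothesis holds trivially, while $\limsup_{t\to\infty}f(t)=1$), and indeed the paper's own Lusin step---``there is $s^*\in S_m$ with $s^*>M$ and $|f(s^*)|>c$''---implicitly replaces the pointwise $\limsup$ by an essential one, since the unbounded set $\{t:|f(t)|>c\}$ could a priori lie entirely in the Lusin-exceptional null set $\mathbb{R}_+\setminus S$. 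So the essential reading is in fact required for the paper's argument too; your version makes this explicit and shortens the proof. What the paper's route buys is a Littlewood-style intuition (measurable~$=$~nearly continuous) and an explicit exhibition of the superlevel set as locally density-open; what yours buys is economy and a single bitopological argument covering both cases with no appeal to Lusin or to continuity at all.
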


\begin{proof}Suppose not; choose $c>0$ with $\limsup_{t\rightarrow
\infty }|f(t)|>c>0$. By Theorem L there is a density-open set $S$ which is
almost all of $\mathbb{R}_{+}$ and an increasing decomposition
$S:=\bigcup_{m}S_{m}$ such that each $f|S_{m}$ is continuous. Put $B=\{s\in
S:|f(s)|>c\}$. For any $M>0$, there is an $s^{\ast }\in S_{m}$ for some $m$
with $s^{\ast }>M$ such that $|f(s^{\ast })|>c$. Then by continuity of
$f|S_{m}$, for some $\delta >0$ we have $|f(s)|>c$ for $s\in B_{\delta
}(s^{\ast })\cap S_{m}$. Thus $B$ is essentially unbounded. By Theorem
\ref{t:6}M there exists $v\in V$ such that $qv\in B$, for unboundedly many $q\in
\mathbb{Q}_{+}$; but, for such a $v$, we have $\lim_{q\rightarrow \infty
}f(qv)\neq 0$, a contradiction.
\end{proof}

We will need the following result, which is of independent interest. The Baire
case is implicit in \cite[Th.\ 2]{King1}. A related Baire category result is
in \cite[Th.\ 2.3.7]{HJ}\index{Hoffmann-Jorgensen, J.@Hoffmann-J\o rgensen,
J.} (with $G=\mathbb{R}_{+}$ and $T=\mathbb{Q}$ there).

\begin{theorem}[\textbf{Constancy of rationally invariant
functions}]\label{t:8}\index{rationally invariant function|(}If for
$f:\mathbb{R}_{+}\rightarrow \mathbb{R}$ measurable
\[
f(qx)=f(x),\text{ for }q\in \mathbb{Q}_{+}\text{ and almost all }x\in
\mathbb{R}_{+},
\]
then $f(x)$ takes a constant value almost everywhere. The
category version holds, mutatis mutandis, with $f$ Baire.
\end{theorem}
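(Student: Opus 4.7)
The plan is to argue by contradiction, applying the Bitopological Kingman Theorem (\ref{t:6}M for the measure statement, \ref{t:6}C for the Baire statement) to a pair of level sets on which $f$ takes values on opposite sides of some constant.

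Suppose $f$ is not constant almost everywhere (resp.\ not constant on a comeagre set). Then there is a $c \in \mathbb{R}$ such that both $S_1 := \{x \in \mathbb{R}_{+} : f(x) < c\}$ and $S_2 := \{x \in \mathbb{R}_{+} : f(x) > c\}$ are non-null (resp.\ non-meagre). Since $\mathbb{Q}_{+}$ is countable, the hypothesis yields a single exceptional set $N$, null (resp.\ meagre), such that $f(qx) = f(x)$ for every $q \in \mathbb{Q}_{+}$ whenever $x \notin N$; in particular $qS_i \triangle S_i$ is negligible for every $q \in \mathbb{Q}_{+}$ and $i = 1, 2$.

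The key step---which I expect to be the main obstacle---is to deduce that each $S_i$ is essentially unbounded (in the appropriate sense). In the measure case, $qS_i = S_i$ modulo null sets combined with the scaling of Lebesgue measure gives
\[
|S_i \cap (0, qM)| = |qS_i \cap (0, qM)| = q\,|S_i \cap (0, M)|
\]
for every $q \in \mathbb{Q}_{+}$ and every $M > 0$. Picking $M_0$ with $|S_i \cap (0, M_0)| > 0$ and letting $q \to \infty$ forces $|S_i| = \infty$; since $|S_i \cap (0, m)| \leq m$ for every $m$, this in turn forces $|S_i \cap (m, \infty)| = \infty$, so $S_i$ is essentially unbounded in the measure sense. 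In the Baire case, write $S_i = (U \setminus M_0) \cup M_1$ with $U$ non-empty open and $M_0, M_1$ meagre; for each $q \in \mathbb{Q}_{+}$ the open set $qU$ lies in $S_i$ modulo a meagre set, so $S_i$ contains---modulo a countable (hence meagre) union---the unbounded open set $\bigcup_{q \in \mathbb{Q}_{+}} qU$, making $S_i$ essentially unbounded in the category sense.

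Theorem \ref{t:6}C (resp.\ \ref{t:6}M) now supplies, for quasi all (resp.\ almost all) $\eta$, unbounded sets $\mathbb{J}_\eta^{i} \subset \mathbb{N}$ (resp.\ $\subset \mathbb{Q}_{+}$) with $\{j\eta : j \in \mathbb{J}_\eta^{i}\} \subset S_i$ for $i = 1, 2$. Choose such an $\eta$ outside $N$---possible since both the Kingman exceptional set and $N$ are negligible. For any $j_1 \in \mathbb{J}_\eta^{1}$ and $j_2 \in \mathbb{J}_\eta^{2}$, rational invariance at $\eta$ gives $f(\eta) = f(j_1 \eta) < c$ and $f(\eta) = f(j_2 \eta) > c$ simultaneously, the desired contradiction.
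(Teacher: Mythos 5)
Your proof is correct, but it follows a genuinely different route from the paper's. The paper argues directly: via Lusin's Theorem (Theorem L) it extracts a density-open set $S=\bigcup_{m}S_{m}$, almost all of $\mathbb{R}_{+}$, with each $f|S_{m}$ continuous, reduces without loss of generality to exact rational invariance on $S$, and then invokes the multiplicative Sierpi\'{n}ski Lemma (Theorem S) to connect any two points $a,b$ of a common $S_{m}$ by sequences $a_{n}\to a$, $b_{n}\to b$ in $S_{m}$ with $b_{n}=q_{n}a_{n}$; relative continuity plus invariance along these sequences then gives $f(a)=f(b)$, so $f$ is constant on $S$. You instead argue by contradiction through the Kingman Theorem itself (Theorems \ref{t:6}M/\ref{t:6}C): you produce two non-negligible level sets $S_{1}=\{f<c\}$, $S_{2}=\{f>c\}$, establish that each is essentially unbounded---in the measure case from $qS_{i}\triangle S_{i}\subset qN$ null together with the dilation identity $|qA|=q|A|$, in the Baire case from the open-modulo-meagre representation and the unboundedness of $\bigcup_{q}qU$---and then apply Kingman to place rational multiples of a single $\eta\notin N$ in both $S_{1}$ and $S_{2}$, forcing $f(\eta)<c$ and $f(\eta)>c$ simultaneously. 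The paper's proof is the lighter one, resting only on Lusin and a Steinhaus-type lemma and not drawing on the bitopological Kingman machinery at all; yours is heavier but is a natural, non-circular application of that machinery (Theorem \ref{t:8} does not feed back into Theorems \ref{t:2}--\ref{t:6}), and the essential-unboundedness computation is a tidy self-contained observation. Both are sound.
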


\begin{proof}[Proof (for the measure case)]Again by Theorem L, there is a
density-open set $S$ which is almost all of $\mathbb{R}_{+}$ and an
increasing decomposition $S:=\bigcup_{m}S_{m}$ such that each
$f|S_{m}$ is continuous. We may assume without loss of generality that
$f(qx)=f(x)$, for $q\in \mathbb{Q}_{+}$ and all $x\in S$. We claim that on $S$
the function $f$ is constant. Indeed, by Theorem S if $a$, $b$ are in $S_{m}$,
then, since they are density points of $S_{m}$, there are $a_{n}$, $b_{n}$ in
$S_{m}$ and $q_{n}$ in $\mathbb{Q}_{+}$ such that $b_{n}=q_{n}a_{n}$ with
$a_{n}\rightarrow a$ and $b_{n}\rightarrow b$, as $n\rightarrow \infty $ (in
the metric sense). Hence, since $f(q_{n}a_{n})=f(a_{n})$, relative
continuity gives
\[
f(b)=\lim_{n\rightarrow \infty}f(b_{n})
=\lim_{n\rightarrow \infty}f(q_{n}a_{n})=\lim_{n\rightarrow \infty }f(a_{n})=f(a).
\]
The Baire case is similar, but simpler.
\end{proof}

Of course, if $f(x)$ is the indicator function $1_{\mathbb{Q}}(x)$, which is
measurable/Baire, then $f(x)$ is constant almost everywhere, but not
constant, so the result in either setting is best possible.

\begin{theorem}[\textbf{Uniqueness of
limits}---cf.\ {\cite[Th.\ 2]{King1}}]\label{t:9}For
$f:\mathbb{R}_{+}\rightarrow \mathbb{R}$ measurable,
suppose that for each $x>0$ the limit
\[
L(x):=\lim_{q\rightarrow \infty }f(qx)
\]
exists and is finite on a density-open set $V$ (in particular an
interval). Then $L(x)$ takes a constant value a.e.\ in
$\mathbb{R}_{+}$, $L$ say, and
\[
\lim_{t\rightarrow \infty }f(t)=L.
\]
The category version holds, mutatis mutandis, with $f$
Baire and $V$ open.
\end{theorem}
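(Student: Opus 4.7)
The plan is to reduce the theorem to the two previous results by first identifying the candidate constant $L$, then feeding the difference $f - L$ into Theorem \ref{t:7}. The main work is producing a clean density-open set on which the pointwise limit equals $L$, since Theorem \ref{t:8} only delivers a.e.\ invariance.

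\textbf{Step 1 (constancy a.e.).} I would first extend the partial function $L$ to all of $\mathbb{R}_{+}$ by setting $\tilde{L}(x):=\limsup_{q\to\infty}f(qx)$, the $\limsup$ being taken along $\mathbb{Q}_{+}$. Since $\mathbb{Q}_{+}$ is countable and $f$ is measurable, $\tilde{L}$ is measurable. For any fixed $q_{0}\in\mathbb{Q}_{+}$ the map $q\mapsto qq_{0}$ is a cofinal bijection of $\mathbb{Q}_{+}$, so $\tilde{L}(q_{0}x)=\tilde{L}(x)$ identically. Theorem \ref{t:8} now applies and yields a constant $L$ with $\tilde{L}(x)=L$ a.e. Because $\lim_{q}f(qx)$ exists on $V$, we have $L(x)=\tilde{L}(x)$ there, and hence $L(x)=L$ for a.e. $x\in V$; this already gives the claimed constancy of $L$ on $V$ a.e.

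\textbf{Step 2 (continuous limit).} To apply Theorem \ref{t:7} I need the identity $\lim_{q}(f(qx)-L)=0$ to hold on some \emph{density-open} set, not merely a.e.\ on $V$. Let $E:=\{x\in V:\tilde{L}(x)=L\}$; then $V\setminus E$ is null. Take $V'$ to be the set of density points of $E$ that belong to $E$. By the Lebesgue Density Theorem, $V'$ is density-open, $E\setminus V'$ is null, and hence $V'$ is a non-empty density-open subset of $V$ on which $\lim_{q}f(qx)=L$ holds for every $x$. Setting $g(t):=f(t)-L$, the hypotheses of Theorem \ref{t:7} are met by $g$ on $V'$, giving $\lim_{t\to\infty}g(t)=0$ and therefore $\lim_{t\to\infty}f(t)=L$.

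\textbf{Baire case and main obstacle.} The Baire case proceeds identically, using the Baire forms of Theorems \ref{t:7} and \ref{t:8} and replacing ``density point of $E$'' by ``interior point'' of the regular representative of $E$ (so that $V'$ becomes Euclidean-open). The only real subtlety, and the step I expect to need the most care, is the passage from ``$\tilde L=L$ a.e.\ on $V$'' to ``$\tilde L=L$ on a density-open subset of $V$'': this uses exactly the Lebesgue Density Theorem and is what makes the density topology the natural framework here. Everything else is a straightforward assembly of Theorems \ref{t:7} and \ref{t:8}.
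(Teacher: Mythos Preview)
Your overall architecture---feed into Theorem~\ref{t:8} to identify the constant, then into Theorem~\ref{t:7} for the continuous limit---is exactly what the paper does. The genuine difference is in how you globalise $L$: the paper observes that the domain of $L$ is stable under rational dilation and invokes Theorem~\ref{t:2} to cover almost all of $\mathbb{R}_{+}$, whereas you sidestep Theorem~\ref{t:2} entirely by passing to $\tilde L=\limsup_{q}f(q\,\cdot\,)$, which is defined everywhere and rationally invariant by the cofinality argument. Your route is a little more self-contained; the paper's route keeps the bitopological theme in play. In the measure case your Step~2 is actually more scrupulous than the paper's one-line ``replace $f$ by $f-L$ and apply Theorem~\ref{t:7}'', since you explicitly manufacture a density-open $V'$ on which $L(x)=L$ holds pointwise, not merely a.e. One small technicality: $\tilde L$ may take the values $\pm\infty$, so Theorem~\ref{t:8} does not apply as stated; compose with $\arctan$ (or note that the proof of Theorem~\ref{t:8} works for extended-real functions) before invoking it.

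The Baire sketch, however, does not go through as written. A comeagre Baire subset $E$ of an open $V$ need not contain any non-empty open set (the irrationals in an interval are the standard example), so ``interior points of the regular representative of $E$'' does not yield an open $V'\subseteq E$: the open part $U$ of the representation $E=(U\setminus M_{0})\cup M_{1}$ is not contained in $E$. The fix is not hard but is different in character from the measure case: either (a) observe that the \emph{proof} of Theorem~\ref{t:7} only ever needs one point $v\in V$ satisfying the limit hypothesis and lying in the quasi-all set furnished by Theorem~\ref{t:6}C, and such a $v$ exists in $E$ since $E$ is comeagre in $V$; or (b) restate Theorem~\ref{t:7} with the weaker hypothesis ``for quasi-all $x\in V$'', which its proof already supports. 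Either way, the Baire case needs a sentence acknowledging that you are using a mild strengthening of Theorem~\ref{t:7}, not the statement as given.
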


\begin{proof}Since $\mathbb{Q}_{+}$ is countable, the function $L(x)$ is
measurable/Baire. Note that for $q\in \mathbb{Q}_{+}$ one has $L(qx)=L(x)$, so
if $L$ is defined for $x\in V$, then $L$ is defined for $x\in qV$ for each
$q\in \mathbb{Q}_{+}$, so by Theorem \ref{t:2} for almost all $x$ in some
half-infinite interval and thus for almost all $x\in \mathbb{R}_{+}$. The
result now follows from Theorem \ref{t:8}. As for the final conclusion,
replacing $f(x)$ by $f(x)-L$, we may suppose that $L=0$, and so may apply
Theorem \ref{t:7}.
\end{proof}

We now extend an argument in \cite{King1}. Recall that $f$ is
\textit{essentially bounded}\index{essentially (un)bounded} on $S$ if $\esssup_{S}f<\infty $.

\begin{defn}Call $f$ \textit{essentially bounded at infinity} if
for some $M$\break $\esssup_{(n,\infty )}f\leq M$, for all large enough $n$,
i.e.\ $\limsup_{n\rightarrow \infty }[\esssup_{(n,\infty )}\allowbreak
f]<\infty$.
\end{defn}

\begin{theorem}[\textbf{Essential boundedness
theorem}---cf.\ {\cite[Cor.\ 3 to Th.\ 1]{King1}}]\label{t:10}For $V$ non-empty
density-open and $f:\mathbb{R}_{+}\rightarrow \mathbb{R}$ measurable,
suppose that for each $x\in V$
\[
\sup \{f(qx):q\in \mathbb{Q}_{+}\}<\infty .
\]
Then $f(t)$ is essentially bounded at infinity. The
category version holds, mutatis mutandis, with $f$ Baire and $V$ open.
\end{theorem}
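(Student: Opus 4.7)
The plan is to argue by contradiction, with Theorem \ref{t:6}M (Kingman for Measure) as the main engine. Suppose $f$ fails to be essentially bounded at infinity. The quantity $a_n := \esssup_{(n,\infty)}f$ is monotone non-increasing in $n$, so its limit exists in $(-\infty,+\infty]$; the failure of essential boundedness at infinity means this limit is $+\infty$, which (by monotonicity) forces $a_n = +\infty$ for every $n$.

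For each $k\in\mathbb{N}$, define the superlevel set
\[
S_{k} := \{t\in\mathbb{R}_{+}:f(t)>k\}.
\]
Since $f$ is measurable, each $S_k$ is measurable, and $a_n=+\infty$ unpacks precisely as $|S_{k}\cap(n,\infty)|>0$ for all $n, k$. By property~(iii) of the density topology (meagre in $d$ = null), $S_{k}^{>m}$ is non-meagre in $d$ for every $m$, i.e.\ $S_{k}$ is essentially unbounded in the measure sense. This is the one computational bridge in the proof and I expect it to be the main conceptual step, though it is short.

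Now I apply Theorem \ref{t:6}M to the family $\{S_{k}:k\in\mathbb{N}\}$: for almost every $\eta\in\mathbb{R}_{+}$ and each $k$ there is an unbounded $\mathbb{J}_{\eta}^{k}\subset\mathbb{Q}_{+}$ with $\{q\eta:q\in\mathbb{J}_{\eta}^{k}\}\subset S_{k}$. In particular, $\sup\{f(q\eta):q\in\mathbb{Q}_{+}\}=+\infty$ for almost every $\eta$. Because $V$ is non-empty and density-open, each of its points is a density point of $V$, so $|V|>0$; therefore $V$ meets the almost-all set produced by Theorem \ref{t:6}M, yielding some $\eta\in V$ with $\sup_{q}f(q\eta)=+\infty$. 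This contradicts the hypothesis $\sup\{f(qx):q\in\mathbb{Q}_{+}\}<\infty$ for all $x\in V$.

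The category version goes through mutatis mutandis: replace $\mathbb{Q}_{+}$ by $\mathbb{N}$, use Theorem \ref{t:6}C in place of \ref{t:6}M, note that non-empty open sets are non-meagre in the Euclidean topology, and read ``essentially bounded at infinity'' in the Baire category sense (with $\limsup_{n}$ of the category-analogue essential sup on $(n,\infty)$, i.e.\ the infimum of $M$ such that $\{t>n:f(t)>M\}$ is meagre).
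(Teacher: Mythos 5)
Your proof is correct, and it rests on the same engine as the paper's (Theorem~\ref{t:6}M), but the decomposition into essentially unbounded sets is different, and each has something to recommend it. The paper constructs a \emph{single} essentially unbounded set $G=\bigcup_n G_n$ from disjoint bounded blocks $G_n\subset(m(n),m(n+1))$ on which $|f|>n$, applies Kingman to $\{G\}$ alone, and then exploits boundedness and disjointness of the $G_n$ to argue that an unbounded set of $q$ with $qv\in G$ must hit infinitely many $G_n$, hence $\sup_q|f(qv)|=\infty$. You instead apply Kingman directly to the countable family of superlevel sets $S_k=\{t:f(t)>k\}$, each of which you correctly show to be essentially unbounded from the failure of essential boundedness at infinity (after first observing that $\esssup_{(n,\infty)}f$ is non-increasing, so its failure to stay finite forces it to be $+\infty$ for every $n$). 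Your route makes transparent exactly where the \emph{multiple-embedding} strength of Kingman's theorem over KBD is being used---one generic $\eta$ simultaneously handles all the $S_k$---whereas the paper's is a clever single-set reduction that encodes the family inside the geometry of $G$. Your final step (that $V$ density-open and non-empty implies $|V|>0$, so $V$ meets the conull set of good $\eta$) is exactly what the paper also implicitly uses, and your mutatis-mutandis remark for the category case is sound.

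One minor point for consistency: the paper's own proof quietly shifts from $f$ to $|f|$ when it defines $G_n$ via $|f(t)|>n$, whereas the definition of essential boundedness at infinity and the hypothesis on $V$ are stated with $f$ itself. You stayed faithful to the stated one-sided version with $S_k=\{t:f(t)>k\}$, which is the correct reading, so no issue---but it is worth noting you are actually cleaner on this point than the original.
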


\begin{proof}Suppose not. Then for each $n=1$, 2, \dots\ there exists $m\in
\mathbb{N}$, arbitrarily large, such that $\esssup_{(m,\infty )}f>n$. We
proceed inductively. Suppose that $m(n)$ has been defined so that
$\esssup_{(m(n),\infty )}f>n$. Choose $m(n+1)>m(n)$ so that
\[
G_{n}:=\{t:m(n)<t<m(n+1)\text{ and }|f(t)|>n\}
\]
is non-null (otherwise, off a null set, we would have $|f(t)|\leq n$ for all
$t>m(n)$, making $n$ an essential bound of $f$ on $(m,\infty )$, for each
$m>m(n)$, contradicting the assumed essential unboundedness). Each $G_{n}$ is
measurable non-null, so defining
\[
G:=\bigcup_{n}G_{n}
\]
yields $G$ essentially unbounded. So there exists $v\in V$ such that $qv\in G$,
for an unbounded set of $q\in \mathbb{Q}_{+}$. Since each set $G_{n}$ is
bounded, the set $\{qv:q\in \mathbb{Q}_{+}\}$ meets infinitely many of the
disjoint sets $G_{n}$, and so
\[
\sup \{|f(qv)|:q\in \mathbb{Q}_{+}\}=\infty ,
\]
contradicting our assumption.
\end{proof}\index{Kingman, J. F. C.|)}

We close with the promised comparison of $L_{\mathbb{Q}}(h)$ with
$L_{\mathbb{N}}(h)$ (of course, if $L_{\mathbb{Q}}(h)$ exists, then so does
$L_{\mathbb{N}}(h)$ and they are equal). We use the decomposition $q=n(q)+r(q)$,
with $n(q)\in \mathbb{N}$ and $r(q)\in \lbrack 0,1)\cap \mathbb{Q}$.

\begin{theorem}[\textbf{Perturbed
skeletons}]\label{t:11}\index{skeleton!perturbed skeleton}For $f:\mathbb{R}_{+}
\rightarrow\mathbb{R}$ measurable, the limit
$L_{\mathbb{Q}}(h)$ exists for all $h$ in the non-empty interval
$I=(0,b)$ with $b>0$ iff for all $h$ in $I$ the limit
\[
\lim_{n}f(n(h+z_{n}))
\]
exists, for every null sequence $z_{n}$ with
\[
r_{n}:=n(z_{n}/h)\in \lbrack 0,1)\cap \mathbb{Q}.
\]
Furthermore, if either limit exists on $I$ then it exists
a.e.\ on $\mathbb{R}_{+}$, and then both limits are equal a.e.\ to
$L_{\mathbb{N}}(h)$.

If $I=(a,b)$ with $0<a<b$, the assertion holds
far enough to the right.
\end{theorem}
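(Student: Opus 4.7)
The plan is to read the theorem as a repackaging of $L_{\mathbb{Q}}(h)$ via the decomposition $q = n + r$ with $n\in\omega$, $r\in [0,1)\cap \mathbb{Q}$: setting $z_n := rh/n$ one has $qh = n(h+z_n)$ and $r_n := n\cdot z_n/h = r$, so the admissible null sequences $\{z_n\}$ with $r_n\in[0,1)\cap\mathbb{Q}$ are exactly the ``perturbations'' that recover arbitrary $q\in \mathbb{Q}_{+}$. The forward implication is then immediate: given admissible $\{z_n\}$, put $q_n := n + r_n \in \mathbb{Q}_{+}$; then $q_n\to\infty$ and $f(n(h+z_n)) = f(q_n h) \to L_{\mathbb{Q}}(h)$.

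For the reverse implication I would proceed in two steps. Step 1: show the value $L := \lim_n f(n(h+z_n))$ is independent of the admissible $\{z_n\}$. Given two admissible sequences $\{z_n\}$, $\{z'_n\}$, interleave to $w_n := z_n$ for even $n$ and $w_n := z'_n$ for odd $n$; since admissibility is a pointwise condition on each pair $(n, z_n)$, the sequence $\{w_n\}$ is admissible, so by hypothesis $\lim_n f(n(h+w_n))$ exists, forcing its even and odd subsequential limits to coincide. Step 2: given any $Q_k\to\infty$ in $\mathbb{Q}_{+}$, write $Q_k = N_k + R_k$ (so $N_k\to\infty$ because $R_k\in[0,1)$), pass to a subsequence on which $N_k$ is strictly increasing, and define $z_n := R_k h/N_k$ when $n = N_k$ and $z_n := 0$ otherwise. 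This $\{z_n\}$ is admissible (the default $z_n = 0$ gives $r_n = 0\in[0,1)\cap\mathbb{Q}$), so by hypothesis $\lim_n f(n(h+z_n)) = L$, whence the subsequence $f(N_k(h+z_{N_k})) = f(Q_k h)$ also converges to $L$. A standard subsubsequence argument promotes this to $f(Q_k h)\to L$ along the original $\{Q_k\}$, so $L_{\mathbb{Q}}(h) = L$ exists.

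The only real subtlety is the bookkeeping verifying that interleaving and padding with zeros preserve admissibility, which works precisely because $r_n = 0$ lies trivially in $[0,1)\cap\mathbb{Q}$; there is no genuine obstacle here. For the remaining assertions, the admissible choice $z_n\equiv 0$ shows $L = L_{\mathbb{N}}(h)$ at every $h\in I$, so the two limits coincide on $I$. Applying Theorem~\ref{t:9} to $L_{\mathbb{Q}}(\cdot)$ on the density-open set $V = I$ then yields that $L_{\mathbb{Q}}$ is constant a.e.\ on $\mathbb{R}_{+}$ and equal there to $\lim_{t\to\infty} f(t)$, and the identification with $L_{\mathbb{N}}(h)$ a.e.\ follows. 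The shift to $I = (a,b)$ with $a>0$ is handled identically, the only observation being that $z_n < h/n$ forces $h+z_n$ into any target interval around $h$ for $n$ sufficiently large, so ``far enough to the right'' absorbs any finitary obstruction.
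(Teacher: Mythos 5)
Your forward direction is identical to the paper's.  For the converse, however, you take a genuinely different and more elementary route.  The paper passes from the hypothesis to $L_{\mathbb{Q}}(h)$ by exploiting that $L(h,\{r_{n}\}):=\lim_{n}f(n(h+z_{n}))$ is invariant under integer and rational scaling of $h$ (whence its domain is all of $\mathbb{R}_{+}$ for $I=(0,b)$), viewing $L$ as a measurable function on $I\times\mathbb{Q}_{+}^{\omega}\cong I\times\mathcal{I}$, and invoking Theorem~\ref{t:8}; the key assertion that $\lim_{n}f(q_{n}h)=L(h,\{r_{n}\})$ for arbitrary $q_{n}\to\infty$ is left implicit.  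You instead make this step explicit via the interleaving trick (Step~1, giving independence of the admissible sequence) and the pad-with-zeros construction followed by a subsubsequence argument (Step~2, giving existence of $L_{\mathbb{Q}}(h)$ pointwise on $I$).  This is cleaner and requires no rational scaling; having $L_{\mathbb{Q}}$ on $I$ you then invoke Theorem~\ref{t:9} (which packages the Theorem~\ref{t:2}/\ref{t:8} machinery the paper uses directly).  Both routes are sound; yours buys a more transparent converse, the paper's buys the measurability/invariance picture it builds elsewhere.

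One point to fix: your remark about the $I=(a,b)$ case misses what is going on.  The paper's ``far enough to the right'' has nothing to do with $z_{n}<h/n$ pushing $h+z_{n}$ into an interval; it reflects the fact that the paper's scaling argument extends $\partial L$ only to $\bigcup_{p\in\mathbb{N}}pI$, which is a half-line $[c,\infty)$ rather than all of $\mathbb{R}_{+}$ when $a>0$ (by Theorem~\ref{t:1}), since the division step $h\mapsto h/p$ is unavailable.  Your pointwise Step~2 does not suffer from this and, combined with Theorem~\ref{t:9} applied to $V=(a,b)$, in fact yields the full a.e.\ conclusion on $\mathbb{R}_{+}$ without the ``far enough to the right'' caveat, so that sentence of yours should simply be dropped or replaced by the observation that your argument is uniform in $I$.
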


\begin{proof}First we prove the asserted equivalence.

Suppose the limit $L_{\mathbb{Q}}(h)$ exists for all $h$ in the interval $I$.
Then given $z_{n}$ as above, take $q_{n}:=n+r_{n}$, $r_{n}=n(z_{n}/h)$; then
$n(q_{n})=n$, $r(q_{n})=r_{n}$, and
\[
q_{n}h=n(h+z_{n}).
\]
So the following limit exists:
\[
\lim_{n}f(n(h+z_{n}))=\lim_{n}f(q_{n}h)=L_{\mathbb{Q}}(h).
\]

For the converse, take $z_{n}$ as above (so $z_{n}=r_{n}h/n$ with $r_{n}\in
\lbrack 0,1)\cap \mathbb{Q}$); our assumption is that $L(h,\{r_{n}\})$ exists
for all $h\in I$, where
\[
L(h,\{r_{n}\}):=\lim_{n}f(n(h+z_{n})).
\]
Write $\partial L(\{r_{n}\})$ for the `domain of $L$'---the set of $h$ for
which this limit exists. Thus $I\subseteq \partial L(\{r_{n}\})$. Let
$q_{n}\rightarrow \infty $ be arbitrary in $\mathbb{Q}_{+}$. So
$q_{n}=n(q_{n})+r(q_{n})$ and, writing $r_{n}:=r(q_{n})$ and
$z_{n}:=r_{n}h/n(q_{n})$, we find
\[
q_{n}h=n(q_{n})[h+r_{n}h/n(q_{n})]=n(q_{n})[h+z_{n}],
\]
and
\[
r_{n}=n(z_{n}/h)\in \lbrack 0,1)\cap \mathbb{Q}.
\]
By assumption, $\lim_{n}f(q_{n}h)=L(h,\{r_{n}\})$ exists for $h\in I$.
Restricting from $\{n\}$ to $\{pn\}$, the limit $\lim_{n}f(np(h+z_{n}))$
exists for each $p\in \mathbb{N}$, and
\[
L(h,\{r_n\})=\lim_nf(np(h+z_n))=\lim_nf(n(h'+z'_n)),
\]
where $z_{n}^{\prime }=pz_{n}$ and $h^{\prime }=ph$. So
\[
L(h,\{r_{n}\})=L(ph,\{r_{n}\}),
\]
as
\[
n(z_{n}^{\prime }/h^{\prime })=n(z_{n}/h)=r_{n}\in \lbrack 0,1)\cap \mathbb{Q}.
\]
That is, $L(ph,\{r_{n}\})$ exists for $p$ a positive integer, whenever
$L(h,\{r_{n}\})$ exists, and equals $L(h,\{r_{n}\})$. As $h/p\in I=(0,b)$ for
$h\in I$ and $L(h,\{r_{n}\})=L(p(h/p),\{r_{n}\})=L(h/p,\{r_{n}\})$,
$L(rh,\{r_{n}\})$ exists when\-ever $r$ is a positive rational. So the domain
$\partial L$ of $L$ includes all intervals of the form $rI$ for positive
rational $r$, and so includes the whole of $\mathbb{R}_{+}$. Moreover,
$L(\cdot,\{r_{n}\})$ is rationally invariant\index{rationally invariant
function|)}. But, since $f$ is measurable, $L$ is a measurable function on
$I\times \mathbb{Q}_{+}^{\omega }$. Now $\mathbb{Q}_{+}$ can be identified
with $\mathbb{N}\times \mathbb{N}$, and hence $\mathbb{Q}_{+}^{\omega }$ can
be identified with $\mathbb{N}^{\omega }$.  This in turn may be identified
with the irrationals $\mathcal{I}$ (see e.g.\ \cite[p.\ 9]{JR}). So $L$ is
measurable on $\mathbb{R}_{+}\times
\mathcal{I}$ and so, by Theorem \ref{t:8}, $L(\cdot,\{r_{n}\})$ is almost
everywhere constant.

This proves the equivalence asserted. For the final assertion, the argument in
the last paragraph shows that, given the assumptions, $L_{\mathbb{Q}}(h)$
exists for a.e.\ positive $h$; from here the a.e.\ equality is immediate. This
completes the case $I=(0,b)$. For $I=(a,b)$ with $a>0$, $\bigcup_{p
\in \mathbb{N}}pI$ contains some half-line $[c,\infty )$ by Theorem \ref{t:1}.
\end{proof}

The following example, due to R. O. Davies\index{Davies, R. O.|(}, clarifies why use of the natural
numbers and hence also of discrete skeletons\index{skeleton!discrete skeleton}
$L_{\mathbb{N}}(h)$ is inadequate in the measure setting. (We thank Roy Davies
for this contribution.)

\begin{theorem}\label{t:12}The open set
\[
G:=\bigcup_{m=1}^{\infty }(m-2^{-(m+2)},m)
\]
is disjoint for each $n=1$, $2$, \dots\ from the dilation $nF$
of the non-null closed set $F$ defined by
\[
F:=\left[\frac{1}{2},1\right]\backslash \left( \bigcup_{m=1}^{\infty
}\bigcup_{n=m}^{2m-1}\left(\frac{m}{n}-\frac{1}{n2^{m+2}},\frac{m}{n}
\right)\right).
\]
\end{theorem}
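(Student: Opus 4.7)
The plan is a direct verification, broken into a measure estimate (to justify the claim that $F$ is non-null) and an elementary check of disjointness for each fixed $n$.

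First I would verify that $F$ is non-null. The total Lebesgue measure of the removed sets is bounded by
\[
\sum_{m=1}^\infty \sum_{n=m}^{2m-1} \frac{1}{n\cdot 2^{m+2}} \le \sum_{m=1}^\infty \frac{m}{m\cdot 2^{m+2}} = \sum_{m=1}^\infty \frac{1}{2^{m+2}} = \frac{1}{4},
\]
so $|F| \ge |[1/2,1]| - 1/4 = 1/4 > 0$. (Incidentally, the centre points $m/n$ with $m \le n \le 2m-1$ do all lie in $(1/2,1]$, so the removed intervals sit inside $[1/2,1]$, but we do not need this.)

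Next, fix $n \ge 1$ and suppose for contradiction that $nx \in G$ for some $x \in F$. Since $x \in [1/2,1]$ we have $nx \in [n/2, n]$, and $nx \in (m-2^{-(m+2)},m)$ for some $m \ge 1$. The upper bound $nx < m$ forces $m > nx \ge n/2$, while $m - 2^{-(m+2)} < nx \le n$ forces $m \le n$ (as $m$ is an integer and $2^{-(m+2)} < 1$). Thus $n/2 < m \le n$, equivalently
\[
m \le n \le 2m-1.
\]

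This is precisely the index range appearing in the definition of $F$. Dividing $nx \in (m-2^{-(m+2)},m)$ by $n$ gives
\[
x \in \left(\frac{m}{n} - \frac{1}{n\cdot 2^{m+2}},\, \frac{m}{n}\right),
\]
but this interval was explicitly deleted from $[1/2,1]$ in the definition of $F$, contradicting $x \in F$. Hence $nF \cap G = \varnothing$, as required.

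The only subtle point to be careful about is pinning down the index constraint $m \le n \le 2m-1$ from the bounds on $nx$; everything else is bookkeeping. There is no real obstacle here since the sets $G$ and $F$ were engineered precisely so that the intervals excised from $F$ cover, after dilation by $n$, the components of $G$ that $nF$ could possibly meet.
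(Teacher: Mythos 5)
Your proof is correct and follows essentially the same contradiction argument as the paper: pin down the index $m$ from $nx \in (m-2^{-(m+2)},m)$, and show $m \le n \le 2m-1$, exhibiting $x$ as a point of a deleted interval. One small improvement over the paper's write-up: by keeping the strict inequality $m > nx \ge n/2$ you land exactly on $n \le 2m-1$, whereas the paper states the looser $m \le n \le 2m$ and tacitly relies on the fact that when $n=2m$ the interval $(m/n - z_m/n,\, m/n)$ lies entirely below $1/2$ and hence outside $[1/2,1]$ anyway.
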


\begin{proof}Suppose not. Put $z_{m}:=2^{-(m+2)}$ and
\[
E:=\bigcup_{m=1}^{\infty }\bigcup_{n=m}^{2m-1}\left(\frac{m}{n}-
\frac{1}{n}z_{m},\frac{m}{n}\right).
\]
Then for some $n$, there are $f\in F$ and $g\in G$ such that $nf=g$. So for
some $m=1$, 2, \dots
\[
m-z_{m}<nf=g<m,\text{ i.e.\ }\frac{m}{n}-\frac{z_{m}}{n}<f<\frac{m}{n}.
\]
But as $1/2\leq f\leq 1$, we have $n/2\leq m$ and
\[
\frac{m}{n}-\frac{1}{n}z_{m}<1,\text{ i.e.\ }m-z_{m}<n.
\]
Thus $m\leq n\leq 2m$, yielding the contradiction that $f\notin F$. Put
\[
a_{m}:=\sum_{n=m}^{2m-1}\frac{1}{n},\text{ so that }\frac{1}{2}\leq
a_{m}\leq 1.
\]
Then
\[
|E|=\sum_{m=1}^{\infty }a_{m}z_{m}\leq \sum_{m=1}^{\infty
}2^{-(m+2)}=\frac{1}{4},
\]
and $|E|\geq 1/8$. Hence the complementary set $F$ has measure at least
$1/4$.
\end{proof}\index{skeleton!rational skeleton|)}\index{Davies, R. O.|)}

\section{KBD in van der Waerden style}\index{Waerden, B. L. van
der}\index{Kestelman, H.|(}\index{Borwein, D.|(}\index{Ditor, S. Z.|(}
Fix $p$. Let $z_{n}$ be a null sequence. We prove a generalization of KBD
inspired by the van der Waerden theorem on arithmetic
progressions\index{arithmetic progression} (see
Section 6). For this we need the notation
\[
t+\bar{z}_{pm}=t+z_{pm+1},t+z_{pm+1},\dots,t+z_{pm+p}
\]
as an abbreviation for a block of consecutive terms of the null sequence all
shifted by $t$. Our unified proof, based on the $\mathcal{G}_{\delta }$-inner
regularity\index{Gdelta@$G_\delta$!inner regularity} common to measure and category
noted in Section 2, is inspired by a technique in \cite{BHW}.

\begin{theorem}[\textbf{Kestelman--Borwein--Ditor Theorem---consecutive form};
\cite{BOst5}]\label{t:13}Let $\{z_{n}\}\rightarrow 0$ be a
null sequence of reals. If $T$ is Baire/Lebesgue-measurable and
$p\in \mathbb{N}$, then for generically all $t\in T$ there
is an infinite set $\mathbb{M}_{t}$ such that
\[
\{t+\bar{z}_{pm}:m\in \mathbb{M}_{t}\}:=
\{t+z_{pm+1},t+z_{pm+1},\dots,t+z_{pm+p}:m\in \mathbb{M}_{t}\}\subseteq T.
\]
\end{theorem}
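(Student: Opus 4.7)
The plan is to apply the Generic Dichotomy / Completeness framework (Theorems~\ref{t:3}, \ref{t:4}) to the monotonic Baire-set-valued correspondence
\[
F(T):=\bigcap_{n\in \omega }\bigcup_{m>n}\bigcap_{j=1}^{p}(T-z_{pm+j}),
\]
for which $t\in F(T)$ iff there is an infinite $\mathbb{M}_{t}$ with $\{t+\bar{z}_{pm}:m\in \mathbb{M}_{t}\}\subseteq T$---precisely the conclusion sought. Since translations are homeomorphisms of both the Euclidean and the density topology, $F$ sends Baire sets to Baire sets, and it is clearly monotonic. By Theorem~\ref{t:4} the statement reduces to the existence clause: \emph{for every non-meagre $\mathcal{G}_{\delta }$ set $W\in\mathcal{B}a$, one has $W\cap F(W)\neq \varnothing$.}

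Fix such a $W$. By the proof of Theorem~B (in the Baire case) or its analogue under the density topology (in the measure case), pick a non-empty open set $V$---an open interval in the Euclidean setting, a density-open set in the density-topology setting---such that $V\setminus W$ is meagre, respectively null. Choose a ``strictly interior'' subregion $V'\subseteq V$ and an integer $N$ so that $V'+z_{pm+j}\subseteq V$ for all $m\geq N$ and all $j\in\{1,\dots,p\}$; this is possible because $z_{n}\to 0$. For each such $m$ the \emph{bad set}
\[
B_{m}:=\{t\in V':t+z_{pm+j}\notin W\text{ for some }j\in\{1,\dots,p\}\}
\]
is contained in $\bigcup_{j=1}^{p}[(V\setminus W)-z_{pm+j}]$, a translate of a meagre/null set and therefore itself meagre/null. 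Hence $\bigcup_{m\geq N}B_{m}$ is meagre/null, while $V'\cap W$ is comeagre/conull in~$V'$; the difference is non-empty, and any $t$ in it lies in $W$ and satisfies $t+z_{pm+j}\in W$ for every $j$ and every $m\geq N$, witnessing $t\in W\cap F(W)$ with $\mathbb{M}_{t}\supseteq\{m:m\geq N\}$ (even cofinite).

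The main obstacle is the uniform treatment of the measure case: a density-open $V$ does not obviously admit a Euclidean sub-interval $V'$ for which the translation condition $V'+z_{pm+j}\subseteq V$ is clean. Following the bitopological strategy of Sections~2--3 one mixes the two topologies: the BP decomposition in the density topology yields $V$ density-open with $V\setminus W$ null, and the Lebesgue Density Theorem furnishes a Euclidean interval $I$ about a density point $t_{0}$ of $V$ with $|I\setminus V|$ arbitrarily small. Taking $V'\subset I$ a strictly interior Euclidean sub-interval, the bad-set bound becomes $|B_{m}|\leq\sum_{j=1}^{p}|(I\setminus V)-z_{pm+j}|$ plus a null contribution from $V\setminus W$; a reverse-Fatou argument then leaves a positive-measure set of candidates lying in $B_{m}^{c}$ for infinitely many~$m$, and intersecting with the full-measure set $V'\cap W$ and the complement of the null set $\bigcup_{m,j}[(V\setminus W)-z_{pm+j}]$ delivers the required~$t$.
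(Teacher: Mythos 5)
Your proposal is correct, and the existence step is handled by a genuinely different device from the paper's. The paper proves the key existence statement (its Theorem~\ref{t:14}, the Generalized BHW Lemma) by a single unified argument: reduce to a completely metrizable $T$ via almost-completeness, then run a nested-balls construction, at each step invoking Theorem~K (Kemperman's Displacements Lemma, the $p$-fold continuity at the origin of $x\mapsto\lvert(E+x_{1})\cap\cdots\cap(E+x_{p})\rvert$) to keep the intersection $B_{n}\cap(B_{n}-x_{n})\cap\cdots\cap(B_{n}-x_{n+p})$ non-empty; completeness then yields a single translator in $\bigcap_{n}B_{n}$. You avoid Kemperman and completeness entirely, instead splitting into the two readings. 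In the Baire case your argument is in fact simpler and slightly stronger than the paper's: translating a meagre set by a null sequence gives a meagre union of bad sets, so a cofinite $\mathbb{M}_{t}$ is available. In the measure case your density-point plus Fatou argument works once the quantitative bookkeeping is done: taking $I$ about a density point of $V$ with $\lvert I\setminus V\rvert<\varepsilon\lvert I\rvert$ and $V'$ the middle third of $I$, one gets $\lvert B_{m}\rvert\leq p\varepsilon\lvert I\rvert$ for $m\geq N$, whence by Fatou $\lvert\liminf_{m}B_{m}\rvert\leq p\varepsilon\lvert I\rvert<\lvert V'\rvert$ for $\varepsilon<1/(3p)$; subtracting the null set $V'\setminus W$ leaves a positive-measure set of $t\in V'\cap W$ lying in $B_{m}^{c}$ for infinitely many $m$, exactly as required. (Here $\mathbb{M}_{t}$ is only infinite, not cofinite, but that is all the theorem asks.) A small presentational point: your Baire- and measure-case arguments actually show $W\cap F(W)\neq\varnothing$ for every non-meagre/non-null $W\in\mathcal{B}a$, so Theorem~\ref{t:3} already suffices and the detour through Theorem~\ref{t:4} and $\mathcal{G}_{\delta }$ sets is unnecessary. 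The trade-off between the two routes is: the paper's approach is bitopologically uniform and generalizes readily (e.g.\ to normed groups), while yours is more elementary---avoiding Kemperman's lemma and the nesting---at the cost of separate measure and category arguments.
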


This will follow from the two results below, both important in their own
right. The first and its corollary address displacements of open sets in the
density and the Euclidean topologies; it is mentioned in passing in a note
added in proof (p.\ 32) in Kemperman \cite[Th.\ 2.1, p.\ 30]{Kem}, for which we
give an alternative proof. The second parallels an elegant result for the
measure case treated in \cite{BHW}.\index{Kestelman, H.|)}\index{Borwein,
D.|)}\index{Ditor, S. Z.|)}

\begin{Kemp}\index{Kemperman, J. H. B.}If $E$ is
non-null Borel, then $f(x):=|E\cap (E+x)|$ is continuous at $x=0$,
and so for some $\varepsilon =\varepsilon (E)>0$
\[
E\cap (E+x)\text{ is non-null, for }|x|<\varepsilon .
\]
More generally, $f(x_{1},\dots,x_{p}):=|(E+x_{1})\cap \cdots\cap
(E+x_{p})|$ is continuous at $x=(0,\dots,0)$, and so for
some $\varepsilon =\varepsilon _{p}(E)>0$
\[
(E+x_{1})\cap \cdots\cap (E+x_{p})\text{ is non-null, for }
|x_{i}|<\varepsilon\quad(i=1,\dots,p).
\]
\end{Kemp}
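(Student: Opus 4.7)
The plan is to reduce the statement to the standard fact that translation acts $L^{1}$-continuously on indicator functions, and then to read off both the continuity of $f$ at the origin and the positivity of the resulting intersections from elementary measure-theoretic identities.

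First, by inner regularity of Lebesgue measure I would replace $E$, if necessary, by a bounded measurable subset $E_{0}\subseteq E$ with $0<|E_{0}|<\infty$; since $\bigcap_{i}(E+x_{i})\supseteq \bigcap_{i}(E_{0}+x_{i})$, it is enough to prove the displayed continuity for $E_{0}$ in place of $E$. So henceforth assume $0<|E|<\infty$. Next I would prove that $\varphi(x):=|E\triangle (E+x)|\to 0$ as $x\to 0$. The argument is standard: fix $\eta>0$ and approximate $E$ in $L^{1}$-norm by a finite union $U$ of $N$ bounded intervals with $|E\triangle U|<\eta$; for such a $U$ the direct estimate $|U\triangle (U+x)|\le 2N|x|$ is immediate, and the triangle inequality for symmetric differences then yields $|E\triangle(E+x)|\le 2\eta+2N|x|$. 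Letting $|x|\to 0$ and then $\eta\to 0$ gives the claim.

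The one-variable case now follows from
\[
|E\cap (E+x)|=|E|-\tfrac{1}{2}|E\triangle(E+x)|,
\]
a consequence of $|E|=|E+x|$ together with $|A\triangle B|=|A|+|B|-2|A\cap B|$: continuity of $f$ at $0$ is immediate, and $f(0)=|E|>0$ then forces $f(x)>0$ on a neighbourhood of $0$, giving the required $\varepsilon=\varepsilon(E)$.

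For the $p$-variable version the key identity is
\[
E\setminus\bigcap_{i=1}^{p}(E+x_{i})=\bigcup_{i=1}^{p}\bigl[E\setminus(E+x_{i})\bigr],
\]
together with its symmetric counterpart, which give $\bigl|E\,\triangle\,\bigcap_{i}(E+x_{i})\bigr|\le \sum_{i=1}^{p}|E\triangle(E+x_{i})|$. Each summand tends to $0$ as $x_{i}\to 0$ by the single-variable step, so $\bigl|\bigcap_{i}(E+x_{i})\bigr|\to |E|$ as $(x_{1},\dots,x_{p})\to 0$, proving continuity of $f$ at the origin and furnishing an $\varepsilon_{p}(E)>0$ on whose neighbourhood the intersection is non-null. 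The only genuine work is in the $L^{1}$-continuity of translation for an arbitrary bounded measurable set; everything else is set-theoretic accounting, and the multiparameter case reduces to the single-parameter one via the displayed union identity.
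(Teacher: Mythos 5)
Your proof is correct, but it takes a genuinely different route from the paper's. The paper's first proof works locally around a density point $t$ of $E$: it fixes a ball $B_\varepsilon(t)$ with $|E\cap B_\varepsilon(t)|>\tfrac34|B_\varepsilon(0)|$ and applies inclusion-exclusion within that ball, and the $p$-fold case is then handled by an induction that tracks the $2^p$ signed terms of the inclusion-exclusion identity for $|\bigcup A_i|$. You instead reduce to a set of finite measure, prove the $L^1$-continuity of translation on indicators via approximation by finite unions of intervals, and convert that to continuity of $f$ through the identity $|E\cap(E+x)|=|E|-\tfrac12|E\triangle(E+x)|$; your $p$-fold step replaces the paper's inclusion-exclusion induction with the single clean set identity $E\setminus\bigcap_i(E+x_i)=\bigcup_i\bigl(E\setminus(E+x_i)\bigr)$ and the resulting bound $\bigl|E\triangle\bigcap_i(E+x_i)\bigr|\le\sum_i|E\triangle(E+x_i)|$. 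What each buys: the density-point argument is local and gives a quantitative $\varepsilon$ directly in terms of how concentrated $E$ is near a density point (which the paper later exploits, e.g.\ in the proof of Theorem~\ref{t:16}), while your argument is global, more elementary in that it avoids the Lebesgue Density Theorem, and noticeably cleaner in the multi-variable step. It is in fact close in spirit to the paper's terser ``Proof 2,'' which simply cites Halmos~61.A for the base case; you make that self-contained and also streamline the induction. One small imprecision: you say the reduction to bounded $E_0$ lets you ``prove the displayed continuity for $E_0$ in place of $E$,'' but continuity of $f_{E_0}$ at $0$ does not by itself give continuity of $f_E$ when $|E|=\infty$. What does transfer is the non-nullity conclusion, which is all that the downstream applications use; if you want the literal continuity statement for $|E|=\infty$ (with $f(0)=+\infty$ and continuity meaning $f(x)\to\infty$), note that $f_E(x)\ge f_{E_0}(x)\to|E_0|$ for bounded $E_0\subseteq E$ of arbitrarily large finite measure, which gives it.
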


\begin{proof}[Proof 1](After \cite{BHW}\index{Bergelson,
V.}\index{Hindman, N.}\index{Weiss, B.}; cf.\ e.g.\ \cite[Th.\ 6.2 and
7.5]{BOst6}.)  Let $t$ be a density point\index{density point|(} of
$E$. Choose $\varepsilon >0$ such that
\[
|E\cap B_{\varepsilon }(t)|>\frac{3}{4}|B_{\varepsilon }(0)|.
\]
Now $|B_{\varepsilon }(t)\backslash B_{\varepsilon }(t+x)|\leq
(1/4)|B_{\varepsilon }(t+x)|$ for $x\in B_{\varepsilon /2}(0)$, so
\[
|E\cap B_{\varepsilon }(t+x)|>\frac{1}{2}|B_{\varepsilon }(0))|.
\]
By invariance of Lebesgue measure we have
\[
|(E+x)\cap B_{\varepsilon }(t+x)|>\frac{3}{4}|B_{\varepsilon }(0))|.
\]
But, again by invariance, as $B_{\varepsilon }(t)+x=B_{\varepsilon }(0)+t+x$
this set has measure $|B_{\varepsilon }(0)|$. Using $|A_{1}\cup
A_{2}|=|A_{1}|+|A_{2}|-|A_{1}\cap A_{2}|$ with $A_{1}:=E\cap
B_{\varepsilon}(t+x)$
and $A_{2}:=(E+x)\cap B_{\varepsilon }(t+x)$ now yields
\[
|E\cap (E+x)|\geq |E\cap (E+x)\cap (B_{\varepsilon }(t)+x)|>\frac{5}{4}
|B_{\varepsilon }(0)|-|B_{\varepsilon }(0)|>0.
\]
Hence, for $x\in B_{\varepsilon /2}(0)$, we have $|E\cap (E+x)|>0$.

For the $p$-fold form we need some notation. Let $t$ again denote a
\Index{density point} of $E$ and $x=(x_{1},\dots,x_{n})$ a vector of
variables. Set
$A_{j}:=B(t)\cap E\cap (E+x_{j})$ for $1\leq j\leq n$. For each multi-index
$\mathbf{i}=(i(1),\dots,i(d))$ with $0<d<n$, put
\begin{align*}
f_{\mathbf{i}}(x) &:=|A_{i(1)}\cap \cdots\cap A_{i(d)}|; \\
f_{n}(x) &:=|A_{1}\cap \cdots\cap A_{n}|,\qquad f_{0}=|B(t)\cap E|.
\end{align*}

We have already shown that the functions $f_{j}(x)=|B(t)\cap E\cap
(E+x_{j})| $ are continuous at $0$. Now argue inductively: suppose that, for
$\mathbf{i} $ of length less than $n$, the functions $f_{\mathbf{i}}$ are
continuous at $(0,\dots,0)$. Then for given $\varepsilon >0$, there exists $\delta
>0$ such that for $\norm{x}<\delta $ and each such index $\mathbf{i}$ we have
\[
-\varepsilon <f_{\mathbf{i}}(x)-f_{0}<\varepsilon ,
\]
where $f_{0}=|B(t)\cap E|$. Noting that
\[
\bigcup_{i=1}^{n}A_{i}\subset B(t)\cap E,
\]
and using upper or lower approximations, according to the signs in the
\Index{inclusion-exclusion} identity
\[
\left|\bigcup_{i=1}^{n}A_{i}\right|=\sum_{i}|A_{i}|
-\sum_{i<j}|A_{i}\cap A_{j}|+\cdots+(-1)^{n-1}\left|\bigcap_{i}A_{i}\right|,
\]
one may compute linear functions $L(\varepsilon )$, $R(\varepsilon )$ such that
\[
L(\varepsilon )<f_{n}(x)-f_{0}<R(\varepsilon ).
\]
Indeed, taking $x_{i}=0$ in the identity, both sides collapse to the value
$f_{0}$. Continuity follows.
\end{proof}

\begin{proof}[Proof 2]Apply instead Theorem 61.A of
\cite[Ch.\ XII, p.\ 266]{Hal}\index{Halmos, P. R.}
to establish the base case, and then proceed inductively as before.
\end{proof}

\begin{Cor}Theorem K holds for non-meagre Baire sets $E$
in place of Borel sets in the form:

for each $p$ in $\mathbb{N}$ there exists
$\varepsilon =\varepsilon _{p}(E)>0$ such that
\[
(E+x_{1})\cap \cdots\cap (E+x_{p})\text{ is non-meagre, for }
|x_{i}|<\varepsilon\quad (i=1,\dots,p).
\]
\end{Cor}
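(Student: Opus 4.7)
The plan is to reduce this to the definition of the Baire property and a simple open-set calculation, exploiting the fact that translation preserves meagreness. Since $E$ is Baire non-meagre, there exist a non-empty open set $U$ and meagre sets $M_0, M_1$ with $E = (U \setminus M_0) \cup M_1$; equivalently $E \triangle U \subseteq M_0 \cup M_1 =: M$ with $M$ meagre. So for every translate we have $(E + x) \triangle (U + x) \subseteq M + x$, and since meagreness is translation-invariant each $M + x$ remains meagre.

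Next I would pick a non-empty open ball $B_r(u) \subseteq U$ and set $\varepsilon = \varepsilon_p(E) := r/2$. For each $|x_i| < \varepsilon$ and each $y \in B_{r/2}(u)$, we have $|y - x_i - u| < r/2 + r/2 = r$, so $y - x_i \in U$, i.e.\ $y \in U + x_i$. Therefore
\[
B_{r/2}(u) \subseteq \bigcap_{i=1}^{p}(U + x_i).
\]

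Now use the standard inclusion $\bigcap_i A_i \triangle \bigcap_i B_i \subseteq \bigcup_i (A_i \triangle B_i)$ with $A_i := E + x_i$ and $B_i := U + x_i$, giving
\[
\bigcap_{i=1}^{p}(E + x_i) \supseteq \bigcap_{i=1}^{p}(U + x_i) \setminus \bigcup_{i=1}^{p}(M + x_i) \supseteq B_{r/2}(u) \setminus \bigcup_{i=1}^{p}(M + x_i).
\]
The right-hand side is a non-empty open ball with a meagre set removed, hence non-meagre by Baire's theorem, and the corollary follows.

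There is really no substantive obstacle: the argument is genuinely automatic once the Baire decomposition is in place. The only mild subtlety is that, unlike Kemperman's measure-theoretic argument (which needs continuity of $x \mapsto |E \cap (E+x)|$ and a density-point estimate), the category analogue requires no quantitative inequality---the meagre error is swallowed whole, and openness of $U$ supplies a genuine open kernel of the $p$-fold intersection. Consequently the Baire inductive step over $p$ is not needed: the bound $\varepsilon_p(E) = r/2$ works uniformly in $p$, which is a slightly stronger statement than the measure form guarantees.
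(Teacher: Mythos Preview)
Your argument is correct and is precisely the approach the paper intends: its one-line proof (``A non-meagre Baire set differs from an open set by a meagre set'') is just your decomposition $E\bigtriangleup U\subseteq M$ together with the observation that small translates of $U$ share an open ball, the meagre error being absorbed. Your additional remark that $\varepsilon$ can be taken independent of $p$ is a valid sharpening.
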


\begin{proof}A non-meagre Baire set differs from an open set by a meagre
set.
\end{proof}

We will now prove Theorem \ref{t:13} using the Generic Completeness Principle
(Theorem \ref{t:4}); this amounts to proceeding in two steps. To motivate the
proof strategy, note that the embedding property is upward-\Index{hereditary}
(i.e.\ monotonic in the sense of Section 3): if $T$ includes a subsequence of
$z_{n} $ by a shift $t$ in $T$, then so does any superset of $T$. We first
consider a non-meagre $\mathcal{G}_{\delta }$/non-null closed set $T$, just as
in \cite{BHW}, modified to admit the consecutive format. We next deduce the
theorem by appeal to $\mathcal{G}_{\delta }$
inner-regularity\index{Gdelta@$G_\delta$!inner regularity} of
category/measure and Generic Dichotomy. (The subset $E$ of exceptional shifts
can only be meagre/null.)

\begin{theorem}[\textbf{Generalized BHW Lemma---Existence of se-\break
quence embedding}; cf.\ {\cite[Lemma 2.2]{BHW}}]\label{t:14}\index{Bergelson,
V.}\index{Hindman, N.}\index{Weiss, B.}For $T$ Baire
non-meagre\break /measurable non-null and a null sequence $z_{n}\rightarrow 0$,
there exist $t\in T$ and an infinite $\mathbb{M}_{t}$ such that
\[
\{t+\bar{z}_{pm}:m\in \mathbb{M}_{t}\}\subseteq T.
\]
\end{theorem}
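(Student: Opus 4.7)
My plan is to adapt the BHW argument: build a decreasing sequence of ``thick'' subsets of $T$ in which all previously chosen block shifts of $\{z_n\}$ already land, and then extract a common point by completeness.

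First I would apply inner regularity to replace $T$ by a tamer set $T_{0}$. In the measure case, $T$ contains a compact non-null subset, which I take as $T_{0}$. In the category case, Theorem~B supplies a meagre $M\subseteq T$ such that $T\setminus M$ is a non-meagre $\mathcal{G}_{\delta}$; this is $T_{0}$, and it carries an equivalent complete metric $\rho$.

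Next I would construct recursively sets $T_{0}\supseteq T_{1}\supseteq T_{2}\supseteq\cdots$ and integers $N_{1}<N_{2}<\cdots$ satisfying
\[
T_{n+1}\subseteq T_{n}\cap\bigcap_{j=1}^{p}\bigl(T_{n}-z_{pN_{n+1}+j}\bigr),
\]
each $T_{n}$ again compact non-null (measure) or non-meagre $\mathcal{G}_{\delta}$ (category), with Euclidean (resp.\ $\rho$-)diameter tending to $0$. For the inductive step, apply Theorem~K to $T_{n}$ (its Corollary in the category case) with $p+1$ shifts, one of them being $0$, to obtain $\varepsilon_{n}>0$ such that for $|x_{1}|,\dots,|x_{p}|<\varepsilon_{n}$ the intersection $T_{n}\cap\bigcap_{j}(T_{n}+x_{j})$ is non-null/non-meagre. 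Since $z_{n}\to 0$, choose $N_{n+1}>N_{n}$ with $|z_{pN_{n+1}+j}|<\varepsilon_{n}$ for every $j$; setting $x_{j}=-z_{pN_{n+1}+j}$ produces a non-null/non-meagre set of exactly the required shape, within which I extract $T_{n+1}$ of sufficiently small diameter (a compact neighbourhood of a density point in the measure case; the regular-open kernel of the resulting Baire set intersected with a small $\rho$-ball in the category case).

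By nested compactness (measure), or by Cantor's theorem in $(T_{0},\rho)$ with diameters shrinking to $0$ (category), $\bigcap_{n} T_{n}=\{t\}$ is a single point, and $t\in T_{0}\subseteq T$. For each $k\geq 1$ and $j\in\{1,\dots,p\}$, the nesting $T_{k}\subseteq T_{k-1}-z_{pN_{k}+j}$ forces $t+z_{pN_{k}+j}\in T_{k-1}\subseteq T$, so $\mathbb{M}_{t}:=\{N_{k}:k\geq 1\}$ does the job. The main obstacle I anticipate is the category case, where Kemperman's Corollary merely returns a non-meagre Baire (rather than closed) set at each stage; extracting a small subset which is simultaneously $\rho$-small enough to force diameters to $0$ and yet thick enough (non-meagre $\mathcal{G}_{\delta}$) to invoke Kemperman again requires combining the regular-open decomposition of Baire sets with the equivalence of $\rho$ and the Euclidean topology on $T_{0}$.
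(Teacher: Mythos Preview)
Your proposal is correct and follows essentially the same route as the paper: reduce to a completely metrizable $T_{0}$ (compact non-null, resp.\ non-meagre $\mathcal{G}_{\delta}$), then iterate Kemperman's Theorem~K (or its Baire Corollary) to build a nested sequence of thick sets of shrinking $\rho$-diameter absorbing successive $p$-blocks of the null sequence, and extract $t$ by completeness. The paper phrases the inductive sets as relatively open in $T$ and explicitly arranges $\cl_{T}B_{n+1}\subset B_{n}$ so that Cantor's theorem applies directly; your version maintains them as non-meagre $\mathcal{G}_{\delta}$'s, which is equivalent once you insert the same closure nesting (as you anticipate in your final paragraph), since without $\rho$-closed sets the Cauchy limit of a selection $t_{n}\in T_{n}$ need not lie in every $T_{n}$.
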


\begin{proof}The conclusion of the theorem is inherited by supersets (is
upward hereditary), so without loss of generality we may assume that $T$ is
Baire non-meagre/measurable non-null and completely
metrizable\index{metrizability!complete metrizability}, say under a metric
$\rho =\rho _{T}$. (For $T$ measurable non-null we may pass down to a compact
non-null subset, and for $T$ Baire non-meagre we simply take away a meagre set
to leave a Baire non-meagre $\mathcal{G}_{\delta }$ subset.) Since this is
an \Index{equivalent metric}, for each $a\in T$ and $\varepsilon >0$ there
exists $\delta =\delta (\varepsilon )>0$ such that $B_{\delta }(a)\subseteq
B_{\varepsilon }^{\rho }(a)$. Thus, by taking $\varepsilon =2^{-n-1}$ the
$\delta $-ball $B_{\delta }(a)$ has $\rho $-diameter less than $2^{-n}$.

Working inductively in steps of length $p$, we define subsets of $T$ (of
possible translators) $B_{pm+i}$ of $\rho $-diameter less than $2^{-m}$ for
$i=1$, \dots, $p$ as follows. With $m=0$, we take $B_{0}=T$. Given $n=pm$ and
$B_{n} $ open in $T$, choose $N$ such that $|z_{k}|<\min \{\frac{1}{2}
|x_{n}|,\varepsilon _{p}(B_{n})\}$, for all $k>N$. For $i=1$, \dots, $p$, let
$x_{n-1+i}=z_{N+i}\in Z$; then by Theorem K or its Corollary $B_{n}\cap
(B_{n}-x_{n})\cap \cdots\cap (B_{n}-x_{n+p})$ is non-empty (and open). We may
now choose a non-empty subset $B_{n+i}$ of $T$ which is open in $A$ with
$\rho $-diameter less than $2^{-m-1}$ such that $\cl_{T}B_{n+i}\subset
B_{n}\cap (B_{n}-x_{n})\cap \cdots\cap (B_{n}-x_{n+i})\subseteq B_{n+i-1}$. By
completeness, the intersection $\bigcap_{n\in \mathbb{N}}B_{n}$ is
non-empty. Let
\[
t\in \bigcap_{n\in \mathbb{N}}B_{n}\subset T.
\]
Now $t+x_{n}\in B_{n}\subset T$, as $t\in B_{n+1}$, for each $n$. Hence
$\mathbb{M}_{t}:=\{m:z_{mp+1}=x_{n}$ for some $n\in \mathbb{N}\}$ is
infinite. Moreover, if $z_{pm+1}=x_{n}$ then
$z_{pm+2}=x_{n+1}$, \dots, $z_{pm+p}=x_{n+p-1}$ and so
\[
\{t+\bar{z}_{pm}:m\in \mathbb{M}_{t}\}\subseteq T.\qedhere
\]
\end{proof}

We now apply Theorem \ref{t:3} (Generic Dichotomy) to extend Theorem
\ref{t:14} from an existence to a genericity statement, thus completing the
proof of Theorem \ref{t:13}.

\begin{theorem}[\textbf{Genericity of sequence embedding}]\label{t:15}For $T$
Baire/ measurable and $z_{n}\rightarrow 0$, for generically
all $t\in T$ there exists an infinite $\mathbb{M}_{t}$ such that
\[
\{t+\bar{z}_{pm}:m\in \mathbb{M}_{t}\}\subseteq T.
\]
Hence, if $Z\subseteq X$ accumulates at $0$ (has
an accumulation point there), then for some $t\in T$ the set
$Z\cap (T-t)$ accumulates at $0$ (along $Z$).
Such a $t$ may be found in any open set with which $T$
has non-null intersection.
\end{theorem}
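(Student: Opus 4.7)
The plan is to extract this from Theorem~\ref{t:14} via the Generic Completeness Principle (Theorem~\ref{t:4}), treating the measure case by specializing $X$ to $\mathbb{R}_+$ with the density topology (recall that in that topology ``Baire = measurable'' and ``meagre = null''). Define the correspondence
\[
F(T)\;:=\;\bigcap_{n\in \omega}\bigcup_{m\geq n}\bigcap_{i=1}^{p}\bigl(T-z_{pm+i}\bigr).
\]
Since translation by a real is a homeomorphism of $X$ in either topology, $F$ sends Baire sets to Baire sets, and $F$ is manifestly monotonic. Moreover, unwinding the definition, $t\in F(T)$ iff there is an infinite $\mathbb{M}_{t}$ such that $\{t+\bar z_{pm}:m\in\mathbb{M}_{t}\}\subseteq T$; so the asserted property is exactly $t\in T\cap F(T)$.

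Next I would verify the hypothesis of Theorem~\ref{t:4}: for every non-meagre $W\in\mathcal{G}_{\delta}$ we must show $W\cap F(W)\neq \varnothing$. But $W$ is itself Baire non-meagre/measurable non-null and completely metrizable, so Theorem~\ref{t:14} applied to $W$ produces $t\in W$ and an infinite $\mathbb{M}_{t}$ with $\{t+\bar z_{pm}:m\in\mathbb{M}_{t}\}\subseteq W$, i.e.\ $t\in W\cap F(W)$. By the Generic Completeness Principle, for every non-meagre Baire $T$ the set $T\cap F(T)$ is quasi-all of $T$; that is, for generically all $t\in T$ there is an infinite $\mathbb{M}_{t}$ with $\{t+\bar z_{pm}:m\in\mathbb{M}_{t}\}\subseteq T$. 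This is the first assertion.

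For the second assertion, suppose $Z\subseteq X$ accumulates at $0$. Choose a null sequence $\{z_{n}\}\subseteq Z$ with $z_{n}\to 0$ (with $p=1$, say) and apply the first assertion: for generically all $t\in T$ there exists an infinite $\mathbb{M}_{t}$ with $\{t+z_{m}:m\in\mathbb{M}_{t}\}\subseteq T$, i.e.\ $Z\cap (T-t)$ has the sequence $\{z_{m}\}_{m\in\mathbb{M}_{t}}$ of points converging to $0$, so it accumulates at $0$ along $Z$. For the last clause, let $U$ be open with $T\cap U$ non-meagre/non-null. Then $T\cap U$ is itself Baire/measurable, so the first assertion applied to $T\cap U$ yields a translator $t\in T\cap U\subseteq U$ satisfying $\{t+\bar z_{pm}:m\in\mathbb{M}_{t}\}\subseteq T\cap U\subseteq T$, as required.

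The main labour has already been absorbed into Theorem~\ref{t:14}; the only genuine points to watch are (a) checking that $F$ really is a well-defined monotonic $\mathcal{B}a\to\mathcal{B}a$ correspondence in both topological incarnations (routine, since shifts are homeomorphisms in each), and (b) ensuring the measure case is handled by a single invocation by working in the density topology throughout, where the Generic Completeness Principle is available because $(\mathbb{R},d)$ is a Baire space and $d$-Baire sets are precisely the Lebesgue-measurable sets.
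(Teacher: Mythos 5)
Your proposal is correct and takes essentially the same route as the paper: it defines the same monotonic correspondence $F(T)=\bigcap_{n}\bigcup_{m>n}\bigcap_{i=1}^{p}(T-z_{pm+i})$, feeds Theorem~\ref{t:14} into the Generic Dichotomy/Completeness machinery, and then specializes to $p=1$ for the accumulation clause. The only cosmetic difference is that you invoke Theorem~\ref{t:4} (verifying the existence condition only for non-meagre $\mathcal{G}_\delta$ sets) where the paper invokes Theorem~\ref{t:3} directly (since Theorem~\ref{t:14} already gives existence for all non-meagre Baire sets), and you spell out the final ``$t$ in any open set $U$'' clause by applying the main assertion to $T\cap U$, which the paper leaves implicit.
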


\begin{proof}Working as usual in $X$, the correspondence
\[
F(T):=\bigcap_{n\in \omega }\bigcup_{m>n}[(T-z_{pm+1})\cap
\cdots\cap (T-z_{pm+p})]
\]
takes Baire sets to Baire sets and is monotonic. Here $t\in F(T)$ iff there
exists an infinite $\mathbb{M}_{t}$ such that$\{t+\bar{z} _{pm}:m\in
\mathbb{M}_{t}\}\subseteq T$. By Theorem \ref{t:14} $F(T)\cap T\neq
\varnothing $, for $T$ Baire non-meagre, so we may appeal to Generic Dichotomy
(Th.\ \ref{t:3}) to deduce that $F(T)\cap T$ is quasi all of $T$ (cf.\ Example
1 of Section 3).

With the main assertion proved, let $Z\subseteq X$ accumulate at
$0$ and suppose that $z_{n}$ in $Z$ converges to $0$. Take $p=1$.
Then, for some $t\in T$, there is an infinite $\mathbb{M}_{t}$ such that
$\{t+z_{m}:n\in \mathbb{M}_{t}\}\subseteq T$. Thus $\{z_{m}:n\in \mathbb{M}
_{t}\}\subseteq Z\cap (T-t)$ has $0$ as a joint accumulation point.
\end{proof}

The preceding argument identifies only that $Z\cap (T-t)$ has a point of
simple, rather than essential, contiguity. More in fact is true, as we show
in Theorem \ref{t:16} below.

\begin{nota}Omitting the superscript if context allows, denote by
$\mathcal{M}_{0}^{Ba}$ resp.\ $\mathcal{M}_{0}^{Leb}$ the family of
Baire/Lebesgue-measurable sets which \Index{accumulate essentially} at $0$.
\end{nota}

The following is a strengthened version of the two results in Lemma 2.4 (a)
and (b) of \cite{BHW}\index{Bergelson, V.}\index{Hindman, N.}\index{Weiss, B.} (embraced by (iii) below).

\begin{theorem}[\textbf{Shifted-filter property of
$\mathcal{M}_0$}]\label{t:16}Let $A$
be Baire non-meagre/measurable non-null, $B\in\mathcal{M}_{0}^{Ba/Leb}$.
\begin{enumerate}[{\rm (i)}]
\item If $(A-t)\cap B$ accumulates (simply) at $0$,
then $(A-t)\cap B\in \mathcal{M}_{0}$.
\item For $A$, $B\in \mathcal{M}_{0}$, and generically all
$t\in A$, the set $(A-t)\cap B\in \mathcal{M}_{0}$.
\item For $B\in \mathcal{M}_{0}$ and $t$ such
that $(B-t)\cap B$ accumulates (simply) at $0$, the set
$(B-t)\cap B$ accumulates essentially at $0$.
\end{enumerate}
\end{theorem}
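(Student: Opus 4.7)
The strategy is to make (i) carry all the weight and derive (ii) and (iii) as direct corollaries. Part (iii) is the special case $A=B$ of (i): any $B\in\mathcal{M}_0$ is automatically Baire non-meagre / measurable non-null (since $B\cap(0,\varepsilon)$ is for every $\varepsilon>0$), so the hypotheses of (i) apply. Part (ii) follows from (i) by one application of Theorem~\ref{t:15}: because $B\in\mathcal{M}_0$, pick any null sequence $\zeta_n\to 0$ with $\zeta_n\in B$, and apply Theorem~\ref{t:15} to $A$ with this sequence. Generically all $t\in A$ then admit an infinite $\mathbb{M}_t$ with $t+\zeta_m\in A$, i.e.\ $\zeta_m\in(A-t)\cap B$ for such $m$, so $(A-t)\cap B$ accumulates simply at $0$ for generic $t\in A$; invoking (i) upgrades this to $(A-t)\cap B\in\mathcal{M}_0$.

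For the core of (i), fix $\zeta_n\to 0$ with $\zeta_n\in B$ and $\zeta_n+t\in A$, and fix $\varepsilon>0$. Set $B_\varepsilon:=B\cap(0,\varepsilon/2)$, non-meagre/non-null since $B\in\mathcal{M}_0$, and truncate $\{\zeta_n\}$ so $|\zeta_n|<\varepsilon/2$. Apply Theorem~\ref{t:15} to $B_\varepsilon$ with $\{\zeta_n\}$ to obtain a quasi-all subset $G_B\subseteq B_\varepsilon$ of points $s$ admitting an infinite $\mathbb{M}_s^B$ with $s+\zeta_m\in B$; symmetrically, Theorem~\ref{t:15} applied to $A$ produces a quasi-all translator set $G_A\subseteq A$. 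For any $s\in G_B\cap(G_A-t)\cap(0,\varepsilon/2)$ and any $m$ in the common infinite index set, $s+\zeta_m\in B$ and $(s+\zeta_m)+t=(s+t)+\zeta_m\in A$, placing $s+\zeta_m$ in $(A-t)\cap B\cap(0,\varepsilon)$. A non-meagre/non-null supply of such $s$ would, by a Fubini-style combination of the $s$- and $\zeta_m$-directions, yield the desired non-meagre/non-null abundance of witnesses, completing the proof.

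The main obstacle is securing that $G_B\cap(G_A-t)\cap(0,\varepsilon/2)$ is non-meagre/non-null. The factor $G_B\cap(0,\varepsilon/2)$ is non-meagre/non-null inside $B$ by construction, but $G_A-t$ is only as rich near $0$ as $A$ is near $t$, and the hypothesis offers merely the simple accumulation $\zeta_m+t\to t$ in $A$. Here Kemperman's Theorem~K (together with its Baire corollary) is the indispensable tool: the continuity of $x\mapsto|A\cap(A-x)|$, and its non-meagreness counterpart, at $x=0$, combined with the sequence $\{\zeta_m+t\}$ accumulating at $t$ from within $A$, is what forces $A$ to be non-meagre/non-null in every right-neighbourhood of $t$, thereby promoting the simple accumulation of $A$ at $t$ to an essential one. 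With this in hand, the Generic Dichotomy Principle (Theorem~\ref{t:3}) applied to the monotonic correspondence $F(T):=T\cap\bigcap_{n}\bigcup_{m>n}[(T-\zeta_m)\cap(B-t)]$ packages the argument uniformly for category and measure via the bitopological framework of Sections~2--3.
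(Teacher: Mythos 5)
Your reductions of (iii) and (ii) to (i) are sound and match the paper: (iii) is indeed the specialisation $A=B$, and (ii) is obtained from (i) by using the KBD theorem (Theorem \ref{t:15} with $p=1$, with the null sequence taken inside $B$) to make $(A-t)\cap B$ accumulate simply at $0$ for generically all $t\in A$. The problem lies entirely in your treatment of (i), which is the load-bearing part.

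There are two intertwined difficulties. First, the reduction you attempt via Theorem \ref{t:15} is circular: $G_A$ is quasi all of $A$, so $G_A-t$ is quasi all of $A-t$, and $G_B$ is quasi all of $B\cap(0,\varepsilon/2)$; hence, modulo negligible sets, $G_B\cap(G_A-t)\cap(0,\varepsilon/2)$ coincides with $B\cap(A-t)\cap(0,\varepsilon/2)$. Asking that this be non-meagre/non-null is precisely asking that $(A-t)\cap B$ accumulate essentially at $0$ at scale $\varepsilon$ --- the very conclusion you are trying to establish. So the Generic Dichotomy machinery here does not reduce (i) to anything simpler; it restates it.

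Second, the proposed rescue via Theorem K does not hold. Kemperman's continuity of $x\mapsto|A\cap(A-x)|$ at $x=0$ is a global translation statement about the set $A$ as a whole; it carries no local information about the density of $A$ at a \emph{given} point $t$. Simple accumulation of $A$ at $t$ from within, even along a sequence $t+\zeta_m\to t^+$, does not force $A$ to be non-null (or non-meagre) in every right-neighbourhood of $t$: take $A=\{t+1/n:n\in\mathbb{N}\}\cup[t+1,t+2]$, which is measurable non-null, has $t+1/n\in A$ accumulating at $t$, yet is null (and meagre) in $(t,t+1)$. Applying Theorem K to this $A$ is perfectly consistent with that picture, so it cannot ``promote'' simple to essential accumulation at $t$. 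This also shows that the hypothesis of (i) has to be read as implicitly restricting $t$ to generic/density points of $A$ (which is how the genericity in (ii) is to be understood); the paper secures exactly this by passing, without loss of generality, to the case where $A$ is density-open (measure case) or where $A$ is quasi all of an open set $U$ and $t$ avoids the exceptional meagre set (Baire case). Once $t$ is so restricted, (i) is proved in the paper by a direct, elementary argument --- in the Baire case $(A-t)$ is quasi all of the neighbourhood $U-t$ of $0$, so it inherits essential accumulation from $B$ immediately; in the measure case one picks a small $x\in(A-t)\cap B$, uses that $x$ and $x+t$ are density points of $B$ and $A$ respectively, and runs a two-set inclusion--exclusion on a small ball around $x$. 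Neither Theorem \ref{t:15} nor Theorem K enters at this stage. You should replace the Kemperman step with this local density/Baire argument after the WLOG reduction of $A$; your derivations of (ii) and (iii) from (i) then go through as you have them.
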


\begin{proof}We will prove (i) separately for the two cases (a) Baire (b)
measure. From KBD (i) implies (ii), while (i) specializes to (iii) by taking
$A=B$.

\begin{enumerate}[(a)]
\item Baire case. Assume that $A$ is Baire non-meagre and that $B$ accumulates
essentially at $0$.

Suppose that $A\cup N_{1}=U\backslash N_{0}$ with $U$ open, non-empty, and
$N_{0}$ and $N_{1}$ meagre. Put $M=N_{0}\cup N_{1}$ and fix $t\in A\backslash
M$, so that $t$ is quasi-any point in $A$; put $M_{t}^{-}:=M\cup
(M-t)$, which is meagre. As $U\backslash M\subset A$, note that by
translation $(U-t)\backslash (M-t)\subset A-t$.

Let $\varepsilon >0$. Without loss of generality $B_{\varepsilon }(0)\subset
U-t$. By the assumption on $B$, $B\cap B_{\varepsilon }(0)$ is non-meagre,
and thus so is $[B\cap B_{\varepsilon }(0)]\backslash M_{t}^{-}$. But the
latter set is included in $B\cap (A-t)$; indeed
\[
\lbrack B\cap B_{\varepsilon }(0)]\backslash M_{t}^{-}\subset \lbrack B\cap
(u-t)]\backslash (M-t)=B_{\varepsilon }(0)\cap B\cap (A-t).
\]
As $\varepsilon $ was arbitrary, $B\cap (A-t)$ accumulates essentially at $0$.

\item Measure case. Let $A$, $B$ be non-null Borel, with $B$ accumulating
essentially at $e$. Without loss of generality both are density-open (all
points are density points\index{density point|)}). By KBD, $(A-t)\cap B$ accumulates (simply) at $e$
for almost all $t\in A$. Fix such a $t$.

Let $\varepsilon >0$ be given. Pick $x\in (A-t)\cap B$ with $|x|<\varepsilon
/2$ (possible since $B\cap (A-t)$ accumulates at $e$). As $x$ and $x-t$ are
density points of $B$ and $A$ (resp.)\ pick $\delta <\varepsilon /2$ such that
\[
|B\cap B_{\delta }(x)|>\frac{3}{4}|B_{\delta }(x)|=\frac{3}{4}|B_{\delta}(e)|
\]
and
\[
|A\cap B_{\delta }(x-t)|>\frac{3}{4}|B_{\delta }(x-t)|=\frac{3}{4}|B_{\delta}(e)|,
\]
which is equivalent to
\[
|(A-t)\cap B_{\delta }(x)|>\frac{3}{4}|B_{\delta }(e)|.
\]
By \Index{inclusion-exclusion} as before, with $A_{1}:=(A-t)\cap B_{\delta }(x)$ and
$A_{2}:=B\cap B_{\delta }(x)$,
\[
|(A-t)\cap B\cap B_{\delta }(x)|>\frac{3}{2}|B_{\delta }(e)|-|B_{\delta}(e)|>0.
\]
But $|x|<\varepsilon /2<\varepsilon -\delta $ so $|x|+\delta <\varepsilon $,
and thus $B_{\delta }(x)\subseteq B_{\varepsilon }(e)$, hence
\[
|(A-t)\cap B\cap B_{\varepsilon }(e)|>0.
\]
As $\varepsilon >0$ was arbitrary, $(A-t)\cap B$ is measurably large at $e$.
\qedhere \end{enumerate}
\end{proof}

\section{Applications: additive combinatorics}\index{additive combinatorics|(}
Recall van der Waerden's theorem\index{Waerden, B. L. van der|(} \cite{vdW} of
1927, that in any finite colouring of the natural numbers, one colour contains
arbitrarily long arithmetic progressions\index{arithmetic progression|(}. This
is one of
Khinchin's\index{Khinchin, A. Ya. [Khintchine, A. Y.]} three pearls of \Index{number
theory} \cite[Ch.\ 1]{Kh}. It has had enormous impact, for instance in Ramsey
theory\index{Ramsey, F. P.}  (\cite{Ram1}; \cite{GRS}, \cite[Ch.\ 18]{HS}) and
additive combinatorics (\cite{TV}; \cite[Ch.\ 14]{HS}).

An earlier theorem of the same type, but for finite partitions of the
\textit{reals} into measurable cells, is immediately implied by the theorem
of Ruziewicz \cite{Ruz}\index{Ruziewicz, St.|(} in 1925, quoted below. We deduce
its category and measure forms from the consecutive form of the KBD
Theorem. The Baire case is new.

\begin{Ruz}Given $p$
positive real numbers $k_{1}$, \dots, $k_{p}$ and any Baire
non-meagre/measurable non-null set $T$, there exist $d$
and points $x_{0}<x_{1}<\cdots<x_{p}$ in $T$ such that
\[
x_{i}-x_{i-1}=k_{i}d,\qquad i=1,\dots,p.
\]
\end{Ruz}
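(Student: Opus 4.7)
The plan is to deduce Ruziewicz's Theorem directly from the consecutive form of KBD (Theorem \ref{t:13}) by engineering a null sequence whose consecutive blocks encode the prescribed gap ratios $k_1,\dots,k_p$.

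First, introduce the partial sums $K_0:=0$ and $K_i:=k_1+\cdots+k_i$ for $i=1,\dots,p$, so that the desired configuration $x_0<x_1<\cdots<x_p$ with $x_i-x_{i-1}=k_i d$ is equivalent to $x_i=x_0+dK_i$ for all $i$. The task therefore reduces to finding $t\in T$ and $d>0$ such that $t+dK_i\in T$ for every $i=1,\dots,p$.

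Next, choose any strictly positive sequence $d_m\downarrow 0$ (for example $d_m:=1/m$) and define a null sequence $\{z_n\}_{n\ge 1}$ block by block:
\[
z_{pm+i}:=d_{m+1}K_i\qquad(m\ge 0,\ i=1,\dots,p).
\]
Since the $K_i$ are bounded (by $K_p$) and $d_m\to 0$, we have $z_n\to 0$. Apply Theorem \ref{t:13} with this sequence and the given $p$: for (generically) some $t\in T$ there is an infinite $\mathbb{M}_t$ with
\[
\{t+z_{pm+1},\,t+z_{pm+2},\,\dots,\,t+z_{pm+p}\}\subseteq T\qquad(m\in\mathbb{M}_t).
\]
Picking any $m\in\mathbb{M}_t$ and setting $d:=d_{m+1}>0$, $x_0:=t$, $x_i:=t+dK_i$ for $i=1,\dots,p$, we have $x_0=t\in T$ and $x_i=t+z_{pm+i}\in T$; the differences $x_i-x_{i-1}=d(K_i-K_{i-1})=dk_i$ are exactly the prescribed values, and the strict monotonicity $x_0<x_1<\cdots<x_p$ follows from $k_i>0$ and $d>0$.

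The argument contains no real obstacle: once one recognizes that prescribing gap ratios along an arithmetic-progression-like configuration is the same as prescribing that a single shift $t$ sends a fixed finite block of scalar multiples of $K_1,\dots,K_p$ into $T$, the only design choice is the null sequence, and the consecutive KBD theorem does the rest. The unified Baire/measure treatment is inherited from Theorem \ref{t:13}, which explains why the Baire case follows by the same proof as the measure case without extra work.
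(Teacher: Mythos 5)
Your proposal is correct and is essentially the paper's own argument: both define the null sequence in blocks by $z_{pm+i}=(k_1+\cdots+k_i)\cdot(\text{scale}_m)$ (the paper uses $2^{-m}$, you use an arbitrary $d_{m+1}\downarrow 0$), apply the consecutive form of KBD (Theorem \ref{t:13}), and read off $d$, $x_0=t$, $x_i=t+dK_i$.
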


\begin{proof}Given $k_{1}$, \dots, $k_{p}$, define a null sequence by the
condition $z_{pm+i}=(k_{1}+\cdots+k_{i})2^{-m}$ ($i=1$, \dots, $p$). Then
there are $t\in T$ and $m$ such that
\[
\{t+z_{mp+1},\dots,t+z_{mp+p}\}\subseteq T.
\]
Taking $d=2^{-m}$, $x_{0}=t$ and for $i=1$, \dots, $p$
\[
x_{i}=t+z_{mp+i}=t+(k_{1}+\cdots+k_{i})d,
\]
we have $x_{0}<x_{1}<\cdots<x_{p}$ and
\[
x_{i+1}-x_{i}=k_{i}d.\qedhere
\]
\end{proof}

\begin{rems}
\begin{enumerate}[1.]
\item If each $k_{i}=1$ above, then the sequence
$x_{0}$, \dots, $x_{p}$ is an arithmetic progression\index{arithmetic
progression|)} of arbitrarily small step $d$ (which we can take as $2^{-m}$
with $m$ arbitrarily large) and arbitrarily large length $p$. So if
$\mathbb{R}$ is partitioned into a finite number of Baire/measurable cells,
one cell $T$ is necessarily non-meagre/non-null, and contains arbitrarily
long arithmetic progressions of arbitrarily short step. This is similar to the
van der Waerden\index{Waerden, B. L. van der|)} theorem.

\item By referring to the continuity properties of the functions $f_{i}$ in
Theorem K, Kemperman\index{Kemperman, J. H. B.} strengthens the
Ruziewicz\index{Ruziewicz, St.|)} result in the measure case, by establishing
the existence of an upper bound for $d$, which depends on $p$ and $T$ only.
\end{enumerate}
\end{rems}

We now use almost completeness and the \Index{shifted-filter property}
(Th.\ \ref{t:14}) to prove the following.

\begin{BHW}\index{Bergelson, V.}\index{Hindman, N.|(}\index{Weiss, B.}For a
Baire/measurable set $A$ which accumulates essentially\index{accumulate
essentially} at $0$, there exists in $A$ a sequence of reals $\{t_{n}\}$ such
that $\sigma _{F}(t):=\sum_{i\in F}t_{i}\in A$ for every $F\subseteq \omega $.
\end{BHW}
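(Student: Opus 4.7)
The plan is to build the sequence $\{t_n\}$ inductively, maintaining at each stage a set $A_n \in \mathcal{M}_0$ that serves as a pool of valid future translators: $A_n$ is contained in $A$, and every shift of $A_n$ by a partial sum already formed from $t_1, \dots, t_{n-1}$ remains in $A$. The driving tool is the shifted-filter property, Theorem~\ref{t:16}(ii), which says that for any two sets in $\mathcal{M}_0$, intersecting one with the generic shift of the other stays in $\mathcal{M}_0$. This stability is exactly what lets the induction repeat indefinitely.

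More precisely, set $A_1 := A$. At stage $n$, assume inductively that $A_n \in \mathcal{M}_0$, $A_n \subseteq A$, and $A_n + \sigma_F(t) \subseteq A$ for every $F \subseteq \{1,\dots,n-1\}$. Apply Theorem~\ref{t:16}(ii) with both arguments equal to $A_n$: for generically all $t \in A_n$, the set $(A_n - t) \cap A_n$ is again in $\mathcal{M}_0$. Since $A_n$ accumulates essentially at $0$, the slice $A_n \cap (-2^{-n},2^{-n})$ is non-meagre/non-null, so we may select $t_n \in A_n$ inside this slice that also lies in the generic set. Define $A_{n+1} := A_n \cap (A_n - t_n)$. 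Then $A_{n+1} \in \mathcal{M}_0$, $A_{n+1} \subseteq A_n$, and the inductive property propagates: for $F \subseteq \{1,\dots,n\}$ with $n \notin F$ we have $A_{n+1} + \sigma_F \subseteq A_n + \sigma_F \subseteq A$, whereas for $F = F' \cup \{n\}$ we have $A_{n+1} + \sigma_F = (A_{n+1} + t_n) + \sigma_{F'} \subseteq A_n + \sigma_{F'} \subseteq A$.

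Given $\{t_n\}$, any non-empty finite $F \subseteq \omega$ with largest element $n$ satisfies $\sigma_F(t) = t_n + \sigma_{F \setminus\{n\}}(t) \in A_n + \sigma_{F\setminus\{n\}}(t) \subseteq A$, as desired. For an infinite $F$, the series $\sigma_F(t) = \sum_{i \in F} t_i$ converges absolutely by $|t_n| < 2^{-n}$, and is the limit of its finite truncations $\sigma_{F \cap [1,N]}(t) \in A$. To land the limit inside $A$ itself (not just its closure), invoke almost completeness at the outset: pass to a subset $A' \subseteq A$ that is completely metrizable under an equivalent metric $\rho$ (Baire case, via Theorem B) or compact (measure case), still accumulating essentially at $0$; then arrange the $\rho$-diameters of the nested $A_n$ to shrink to $0$, so that $\sigma_F(t) \in \bigcap_n \overline{A_n}^{\rho} \subseteq A'$.

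The main obstacle is the simultaneous control required at each inductive step: one must choose $t_n$ small enough for summability and in the ``generic'' set of Theorem~\ref{t:16}(ii) so that the updated $A_{n+1}$ stays in $\mathcal{M}_0$. Both are possible precisely because essential accumulation at $0$ prevents the generic set from missing every neighbourhood of $0$, i.e.\ meagre/null sets cannot swallow the piece of $A_n$ sitting in $(-2^{-n},2^{-n})$. The secondary technical point, landing infinite sums in $A$ rather than merely in its closure, is handled by the standard device of shrinking $\rho$-diameters inside an almost-complete refinement of $A$.
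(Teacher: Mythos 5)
Your construction is essentially the paper's: build $A_{n+1}=A_n\cap(A_n-t_n)$ via the shifted-filter property (Theorem~\ref{t:16}), keep the invariant that translates of $A_n$ by the partial sums formed so far stay inside $A$, choose $t_n$ small, and handle infinite $F$ through almost completeness. The inductive bookkeeping for finite $F$ is correct.

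The flaw is in the Baire-case closing step. You write that one should ``arrange the $\rho$-diameters of the nested $A_n$ to shrink to $0$, so that $\sigma_F(t)\in\bigcap_n\overline{A_n}^{\rho}\subseteq A'$.'' That cannot be arranged: each $A_n$ must remain in $\mathcal{M}_0$ for the induction to continue, i.e.\ must be non-meagre in every Euclidean neighbourhood of $0$, so the $A_n$ always contain non-meagre pieces arbitrarily near $0$ and their $\rho$-diameters do not tend to $0$. Moreover, for $F$ with $\max F=n$ one has $\sigma_F\in A_n+\sigma_{F\setminus\{n\}}$, a \emph{translate} of $A_n$, not $A_n$ itself; so $\bigcap_n\overline{A_n}^{\rho}$ is not the right object to capture $\sigma_I$ for infinite $I$. (In the measure case, passing to a compact $A'$ does suffice, because a compact set is Euclidean-closed; but that escape is unavailable for a general $\mathcal{G}_\delta$ in the Baire case.)

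The quantity that must shrink is the $\rho$-\emph{oscillation of the partial sums themselves}, and the mechanism is an \emph{adaptive} choice of how small $t_{n+1}$ must be. After stage $n$ there are only finitely many partial sums $\sigma_F$ with $F\subseteq\{1,\dots,n\}$, all lying in $A'$. Since $\rho$ is a metric on $A'$ equivalent to the Euclidean one, for each such $\sigma_F$ and $\varepsilon=2^{-n}$ there is a $\delta_F>0$ with $B_{\delta_F}(\sigma_F)\cap A'\subseteq B^{\rho}_{2^{-n}}(\sigma_F)$; set $\delta_n=\min_F\delta_F$. Now choose $t_{n+1}$ in $A_{n+1}$ with $0<t_{n+1}<\delta_n/2^{n+1}$ (and still in the generic set of Theorem~\ref{t:16}(ii)). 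Then all tails satisfy $\sum_{i>n}t_i<\delta_n$, so for infinite $I$ every later truncation $\sigma_{I\cap[1,m]}$ ($m>n$) lies within Euclidean distance $\delta_n$ of $\sigma_{I\cap[1,n]}$, hence within $\rho$-distance $2^{-n}$. Thus $\bigl(\sigma_{I\cap[1,m]}\bigr)_m$ is $\rho$-Cauchy in the $\rho$-complete space $A'$, its $\rho$-limit is $\sigma_I$, and $\sigma_I\in A'\subseteq A$. This adaptive smallness requirement on $t_{n+1}$, dictated by the $\rho$-to-Euclidean comparison at the finitely many partial sums already built, is the missing ingredient.
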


\begin{proof}As in Theorem \ref{t:14} the conclusion is upward \Index{hereditary},
so with\-out loss of generality we may assume that $A$
is
\index{metrizability!complete metrizability}completely metrizable (for $A$
measurable non-null we may pass down to a compact non-null subset accumulating
essentially at 0, and for $A$ Baire non-meagre we simply take away a
meagre\index{meagre|)} set to leave a Baire non-null $\mathcal{G}_{\delta }$
subset).  Let $\rho =\rho _{A}$ be a complete metric equivalent to the
Euclidean metric\index{equivalent metric}. Denote by $\rho\text{-diam}$ the
$\rho $ diameter of a set.

Referring to the \Index{shifted-filter property} of $\mathcal{M}_{e}^{Ba}$ or
$\mathcal{M}_{e}^{Leb}$, we inductively choose decreasing sets
$A_{n}\subseteq A$ and points $t_{n}\in A_{n}$. Assume inductively that:
\begin{enumerate}[(i)]
\item $(A_{n}-t_{n})$ accumulates at $0$,
\item $\sigma _{F}=\sum_{i\in F}t_{i}\in A_{\max F}$, for any finite
set of indices $F\subseteq \{0,1,\dots,\allowbreak n\}$,
\item $\rho\text{-diam}(\sigma _{F})\leq 2^{-n}$ for all finite $F\subseteq
\{0,1,\dots,n\}$.
\end{enumerate}
\noindent By Theorem \ref{t:16},
\[
A_{n+1}:=A_{n}\cap (A_{n}-t_{n})\text{ accumulates essentially at }0.
\]\index{accumulate essentially}
Let $\delta _{n}\in (0,t_{n})$ be arbitrary (to be chosen later). By above,
we may pick
\[
t_{n+1}\in A_{n+1}\cap (0,\delta _{n}/2)\text{ such that }(A_{n+1}-t_{n+1})
\text{ accumulates at }0.
\]
Thus $t_{n}$ is chosen inductively with $t_{n+1}\in A_{n+1}\cap
(A_{n+1}-t_{n+1})$ and $\sum_{i\in I}t_{i}$ convergent for any $I$.
Also
\[
\sum_{i=n+1}^{\infty }t_{i}\leq
t_{n+1}\sum_{i=n+1}^{\infty }2^{-i}=\delta_{n}2^{-n}<\delta_{n}.
\]

Evidently $t_{1}\in A_{1}$. As $A_{n}\subset A_{n+1}\subset A_{n}-t_{n}$, we
see that, as $t_{1}+\cdots+t_{n}\in A_{n}$, we have $t_{1}+\cdots+t_{n+1}\in
A_{n+1}$. More generally, $\sigma_{F}=\sum_{i\in F}t_{i}\in
A_{\max F}$ for any finite set of indices $F\subseteq \{0,1,\dots,n+1\}$. For
$\varepsilon =2^{-n-1}$ there exists $\delta =\delta (\varepsilon )>0$ small
enough such that for all finite $F\subseteq \{0,1,\dots,n+1\}$
\[
B_{\delta }(\sigma _{F})\subseteq B_{\varepsilon }^{\rho }(\sigma _{F}).
\]
Taking $\delta _{n}<\delta (2^{-n-1})$ in the inductive step above implies
that, for any infinite set $I$, the sequence $\sigma_{I\cap \{0,\dots,n\}}$
is Cauchy under $\rho $, and so $\sigma _{I}\in A$.
\end{proof}\index{additive combinatorics|)}

\begin{rem}[\textbf{Generalizations}]Much of the material here (which extends
immediately from additive to multiplicative formats) can be taken over to the
more general contexts of $\mathbb{R}^{d}$ and beyond---to normed
groups\index{group!normed group} (including Banach spaces)\index{Banach,
S.!Banach space}, for which see \cite{BOst6}. We choose to restrict here to
the line---Kingman's setting---for simplicity, and in view of Mark
Kac's\index{Kac, M.} dictum: No theory can be better than its best
example.\index{measure theory|)}\index{Baire, R.-L.!Baire
set|)}\index{topology!bitopology|)}
\end{rem}

\subparagraph{Postscript}It is no surprise that putting a really good theorem
and a really good mathematician together may lead to far-reaching
consequences. We hope that John Kingman\index{Kingman, J. F. C.!influence}
will enjoy seeing his early work on category still influential forty-five
years later. The link with combinatorics is much more recent, and still
pleases and surprises us---as we hope it will him, and our readers.

\subparagraph{Acknowledgments}It is a pleasure to thank the referee for a close
reading of the paper, Roy Davies\index{Davies, R. O.} for his helpful insights
(Th.\ \ref{t:10}), and Dona Strauss\index{Strauss, D.} for discussions on the
corpus of combinatorial work established by her, Neil Hindman\index{Hindman,
N.|)} and their collaborators. We salute her 75th birthday.

\end{document}